\newtheorem{theorem}{Theorem}[section]
\newtheorem{lemma}[theorem]{Lemma}
\newtheorem{proposition}[theorem]{Proposition}
\newtheorem{corollary}[theorem]{Corollary}
\theoremstyle{definition}
\newtheorem{notation}[theorem]{Notation and Convention}
\newtheorem{definition}[theorem]{Definition}
\newtheorem{remark}[theorem]{Remark}
\newtheorem{question}[theorem]{Question}
\newtheorem{example}[theorem]{Example}
\newtheorem*{theorem*}{Theorem}
\newtheorem*{claim}{Claim}
\newcommand{\expand}[2]{(\frac{1}{p^n})^{#2}\sum \limits_{j= -\infty}^{\infty}\lambda \left(({#1})_j\right) \ e^{-iyj/p^n}}
\newcommand{\Z}{\mathbb{Z}}
\newcommand{\N}{\mathbb{N}}
\newcommand{\bp}[2]{{#1}^{[{p}^{#2}]}}
\newcommand{\E}{\mathcal{E}}
\newcommand{\F}{\mathcal{F}}
\title{Frobenius-Poincar\'e function and Hilbert-Kunz multiplicity}
\author{Alapan Mukhopadhyay}
\date{}
\begin{document}
\maketitle
\begin{abstract}
    We generalize the notion of Hilbert-Kunz multiplicity of a graded triple $(M,R,I)$ in characteristic $p>0$  by proving that for any complex number $y$, the limit
    $$\underset{n \to \infty}{\lim}(\frac{1}{p^n})^{\text{dim}(M)}\sum \limits_{j= -\infty}^{\infty}\lambda \left( (\frac{M}{I^{[p^n]}M})_j\right)e^{-iyj/p^n}$$
    exists. We prove that the limiting function in the complex variable $y$ is entire and name this function the \textit{Frobenius-Poincar\'e function}. We establish various properties of Frobenius-Poincar\'e functions including its relation with the tight closure of the defining ideal $I$; and relate the study Frobenius-Poincar\'e functions to the behaviour of graded Betti numbers of $\frac{R}{I^{[p^n]}} $ as $n$ varies. Our description of Frobenius-Poincar\'e functions in dimension one and two and other examples raises questions on the structure of Frobenius-Poincar\'e functions in general.  
\end{abstract}
\footnotetext[0]{Mathematics Subject Classification 2020: Primary 13D40;  Secondary: 13A02, 13A35, 30D15 .}

\section{Introduction}

In this article, we introduce the \textit{Frobenius-Poincar\'e function} of a graded pair $(R,I)$, where $I$ is a finite co-length homogeneous ideal in the standard graded domain $R$ over a perfect field of positive characteristic $p$.  This function is holomorphic everywhere on the complex plane and is roughly the limit of the Hilbert series of the graded $R$-modules  $\frac{R^{1/p^n}}{IR^{1/p^n}}$ as $n$ goes to infinity. The Frobenius-Poincar\'e function encodes the information of the Hilbert-Kunz multiplicity of the pair $(R, I)$ along with other asymptotic invariants of $(R,I)$.\\

To be precise, fix a pair $(R, I)$ as above. For each positive integer $n$, consider the $R$-module $R^{1/p^n}$, the collection of $p^n$-th roots of elements of $R$ in a fixed algebraic closure of the fraction field of $R$. There is a natural $\frac{1}{p^n}\mathbb{Z}$-grading on $R^{1/p^n}$. So one can consider the  Hilbert series of $\frac{R^{1/p^n}}{IR^{1/p^n}}$ by allowing for rational powers of the variable $t$--- namely $\underset{\nu \in \frac{1}{p^n}\Z}{\sum} \lambda((\frac{R^{1/p^n}}{IR^{1/p^n}})_{\nu})t^{\nu}$. To study these Hilbert series  as \textit{holomorphic functions} on the complex plane, a natural approach is to replace $t$ by $e^{-iy}$ \footnote{Here $e$ is the complex number $\sum \limits_{j=0}^{\infty}\frac{1}{j!}$ and $i$ is a complex square root of $-1$, fixed throughout this article.}, which facilitates taking $p^n$-th roots as holomorphic functions. The process described above gives a sequence $(G_n)_n$ of holomorphic functions where $$G_n(y)= \underset{\nu \in \frac{1}{p^n}\Z}{\sum} \lambda((\frac{R^{1/p^n}}{IR^{1/p^n}})_{\nu})e^{-iy\nu}.$$ 
 Our main result, \Cref{the main theorem}, guarantees that the sequence of functions
$$\frac{G_n(y)}{(p^{\text{dim}(R)})^n }$$
converges, as $n$ goes to infinity, to a function $F(y)$ which is holomorphic everywhere on the complex plane. Furthermore, this convergence is uniform on every compact subset. We call this function $F(y)$ the \textit{Frobenius-Poincar\'e function} associated to the pair $(R,I)$.\\

 The Frobenius-Poincar\'e function can be viewed as a natural refinement of the Hilbert-Kunz multiplicity (see \Cref{HK multiplicity}). Indeed, for the pair $(R,I),$ the Hilbert-Kunz multiplicity is the value of the Frobenius-Poincar\'e function at zero. In fact, we provide an explicit formula for the coefficients of the power series expansion of the Frobenius-Poincar\'e entire function around zero, from which  it is apparent that each of the coefficients of this power series is an invariant  generalizing the Hilbert-Kunz multiplicity (see \Cref{power series expansion}). Even when $R$ is a polynomial ring, there are  examples of ideals with the same  Hilbert-Kunz multiplicity but different  Frobenius-Poincar\'e  functions- see \Cref{fine structure}. Like the Hilbert-Kunz multiplicity itself, the Frobenius-Poincar\'e function of $(R,I)$ depends only on the tight closure of $I$ in the ring $R$, as we prove in \Cref{dependence on tight closure}.\\\\
  

The information carried by the Frobenius-Poincar\'e function can also be understood in terms of homological data associated to the pair $(R,I)$. For example,
when $R$ is a polynomial ring (or more generally, when $R/I$ has finite projective dimension), we prove
 that 
the  Frobenius-Poincar\'e function has the form 
\begin{equation}\label{structure}
\frac{Q(e^{-iy})}{(iy)^{\text{dim} R}}
\end{equation}
 where $Q$ is a polynomial whose  coefficients are explicitly determined by the graded Betti numbers of $R/I$; see \Cref{finite projective dimesnion case}. More generally,
 for an arbitrary graded pair $(R,I)$, the Frobenius-Poincar\'e function of $(R,I)$ can be described in terms of the sequence of graded Betti numbers of $\frac{R}{\bp{I}{n}}$:\\\\
 \textbf{Theorem A:} Let $S \hookrightarrow R$ be a graded Noether normalization. Set $\mathbb{B}^S(j,n)= \sum \limits_{\alpha=0}^{\infty}(-1)^ \alpha \lambda(\text{Tor}_{\alpha}^S(\frac{R}{\bp{I}{n}},k)_j)$. Then the limiting function
 $$B^S(R,I)(y)= \underset{n \to \infty}{\lim}\underset{j \in \mathbb{N}}{\sum} \ \mathbb{B}^S(j,n)e^{-iyj/p^n}$$
 is entire. Furthermore, $\frac{B^S(R,I)(y)}{(iy)^{\text{dim}R}}$ is the Frobenius-Poincar\'e function of $(R,I)$ -- see \Cref{description in terms of Betti number} and \Cref{Betti numbers with respect to Noether normalization}.\\\\
 In a slightly different direction, we show that when $R$ is Cohen-Macaulay, the Frobenius-Poincar\'e entire function is a limit of a sequence of entire functions described in terms of Koszul homologies with respect to a homogeneous system of parameters for $R$, or alternatively,  Serre's intersection numbers, suitably interpreted; see \Cref{cohen-macaulay case}. \\
 
 The Frobenius-Poincar\'e function of $(R,I)$ turns out to be the Fourier transform of the Hilbert-Kunz density function of $(R,I)$ introduced by Trivedi in \cite{TriExist}, as we show in \Cref{FP function is fourier transform of density function}. Using Fourier transform, \textbf{Theorem A} allows us to describe the higher order weak derivatives of the Hilbert-Kunz density function in terms of the sequence graded Betti numbers of $\frac{R}{\bp{I}{n}}$ - see \Cref{differentiability of density function}. Such a description is not apparent in the existing theory of Hilbert-Kunz density functions. In fact, \textbf{Theorem A} relates the question on the order of smoothness of the Hilbert-Kunz density function raised in \cite{TriQuadric} to the question asking whether $B^S(R,I)(y)$ in \textbf{Theorem A} is bounded on the real line -- see \Cref{boundedness of holomorphic function defined by Betti numbers}. Although our work on Frobenius-Poincar\'e functions is inspired by Trivedi's remark that considering Fourier transforms of density functions might be useful (see \cite[page 3]{TriExist}), our proof of the existence and holomorphicity of the Frobenius-Poincar\'e function (\Cref{the main theorem}) is independent of \cite{TriExist}. When $R$ has dimension at least two and $R$ is strongly $F$-regular at each point on the punctured spectrum of $R$, the Hilbert-Kunz density function and hence the Frobenius-Poincar\'e function of $(R,I)$ captures the information of $F$-threshold of $I$- see Theorem 4.9 of \cite{TriFthreshold}. Recently Trivedi has used Hilbert-Kunz density functions to partially settle two conjectures on Hilbert-Kunz multiplicities of quadric hypersurfaces posed by Yoshida and Watanabe-Yoshida- see Theorem A and Theorem B in \cite{TriQuadric}. \\

We speculate that the entire functions that are Frobenius-Poincar\'e functions should have a special structure reflecting that each of these is determined by the data of a {\it{finitely generated}} module. Any such special structure will shed more light not only on the theory of Hilbert-Kunz multiplicities but also on the behaviour of graded Betti numbers of $\frac{R}{\bp{I}{n}}$ as $n$ changes. We ask whether Frobenius-Poincar\'e functions always have a form generalizing the expression (\ref{structure}) above; see  \Cref{the conjectural structure}. \Cref{the conjectural structure} is answered for one dimensional rings in \Cref{FP function in dimension one}. When $R$ is two dimensional, \Cref{the conjectural structure} is answered in \Cref{statement in the curve case}, where we show that the Frobenius-Poincar\'e function is described by the Harder-Narasimhan filtration on a sufficiently high Frobenius pullback of the syzygy bundle of $I$  on the curve $\text{Proj}(R)$ following \cite{TriCurves} and \cite{Brrationality}. The necessary background materials on vector bundles on curves and other topics are reviewed in \Cref{background material}. Also when the ideal $I$ is generated by a homogeneous system of parameters, our computation in \Cref{FP function wrt a homogeneous SOP} answers \Cref{the conjectural structure} positively.\\

We develop the theory of Frobenius-Poincar\'e functions more generally for triples $(M,R,I)$, where $M$ is a finitely generated $\Z$-graded $R$ module; see \Cref{main definition}. We show that the Frobenius-Poincar\'e function is additive on short exact sequences in \Cref{additivity over short exact sequences}. In addition to generalizing classical additivity formulas for Hilbert series and multiplicity, \Cref{additivity over short exact sequences} allows us to compute the Frobenius-Poincar\'e function of a graded ring with respect to an ideal generated by homogeneous system of parameters; see \Cref{FP function wrt a homogeneous SOP}.\\

\begin{notation} \label{Notation}
In this article, $k$ stands for a field.
By a \textit{finitely generated $\N$-graded $k$-algebra}, we mean an $\N$-graded commutative ring whose degree zero piece is $k$ and which is finitely generated over $k$. \\\\
For any ring $S$ containing $\mathbb{F} _p$, the Frobenius or $p$-th power endomorphism of $S$ is denoted by  $F_S$. The symbol $F_S^e$ will denote the $e$-times iteration of $F_S$. We set $S^{p^e}= F^e(S) \subseteq S$. For an ideal $J \subseteq S$, $JS^{p^e}$ is the image of $J$ in $S^{p^e}$ under the $p^e$-th power map. The ideal generated by $p^n$-th power of elements of $J$ in $S$ is denoted by $\bp{J}{n}$.\\\\
For an $S$-module $N$, we denote the Krull dimension of $N$ by $\text{dim}_S(N)$ or $\text{dim}(N)$ when the underlying ring $S$ is clear from the context. When $N$ has finite length, $\lambda_S(N)$ denotes the length of the $S$-module $N$. When $S=k$, simply $\lambda(N)$ will be used to denote the length.\\\\
Recall that an \textit{entire function} is a function holomorphic everywhere on the complex plane (see \cite{Ahlfors}, section 2.3).
\end{notation}
\noindent{ \large{\textbf{Acknowledgements:} }}  I am grateful to Karen Smith for her generosity with sharing ideas and suggestions which has improved the article substantially. I thank Mel Hochster and V. Trivedi for pointing out useful references; Jakub Witaszek, Sridhar Venkatesh for useful discussions; Ilya Smirnov and the anonymous referee for their comments. I also thank Daniel Smolkin, Janet Page, Jenny Kenkel, Swaraj Pande, Anna Brosowsky, Eamon Gallego for their questions, comments during a talk on an early version of this article. I was partially supported by NSF DMS grants \# 2101075, \# 1801697, NSF FRG grant \# 1952399 and Rackham one term dissertation fellowship while working on this article. 
\section{Background material}\label{background material}
In this section, we recall  some results, adapted to our setting, for future reference.  
\subsection{Hilbert-Kunz multiplicity}
Hilbert-Kunz multiplicity is a multiplicity theory in positive characteristic. We refer readers to \cite{HunekeExp} for a survey of this theory. In this subsection, $k$ is a field of characteristic $p>0$.
\begin{definition}\label{HK multiplicity}
Let $R$ be a finitely generated $\N$-graded $k$-algebra; $J$ be a homogeneous ideal such that $R/J$ has finite length. Given a finitely generated $\Z$-graded $R$-module $N$, the \textit{Hilbert-Kunz multiplicity} of the triple $(N,R,J)$ is defined to be the following limit
$$\underset{n \to \infty}{\lim}(\frac{1}{p^{n}})^{\text{dim}(N)}\lambda_R(\frac{N}{\bp{J}{n}N}).$$
Similarly one can define the Hilbert-Kunz multiplicity of a triple $(S,I,M)$- where $I$ is any finite co-length ideal in a Noetherian local ring $S$ and $M$ is a finitely generated $S$-module.
\end{definition}
\noindent The existence of the limit in the \Cref{HK multiplicity} was first established by Monsky (see \cite{MonExist}).\\\\
The Hilbert-Kunz multiplicity of any local ring is at least one. Moreover, under mild hypothesis, it is exactly one if and only if the ring is regular; see Theorem 1.5 of \cite{WY00} and \cite{HuYao}. These two facts suggest that Hilbert-Kunz multiplicity is a candidate for a multiplicity theory. In general, rings with Hilbert-Kunz multiplicity closer to one are interpreted to have better singularities; see \cite{BE} and \cite{GN}.\\\\

Unlike the usual Hilbert-Samuel function, the structure of the Hilbert-Kunz function $f(n)= \lambda(\frac{N}{\bp{J}{n}N})$ is rather elusive. We refer interested readers to \cite{SecCoef}, \cite{Teixeira}, \cite{Flag}.
\subsection{Betti numbers}\label{Homological}
We review results on graded Betti numbers which we use in \Cref{description using homological information}. References for most of these results are \cite{Serre}, and \cite{BH}. Recall that $R$ is a finitely generated $\N$-graded $k$-algebra (see \Cref{Notation}).\\\\
Given a finitely generated $\Z$-graded $R$-module $M$, one can choose a \textit{minimal graded free resolution} of $M$: this is a free resolution $(G_{\bullet}, d_{\bullet})$ of $M$ such that each $G_{n}$ is a graded free $R$-module, the boundary maps preserve graded structures, and the entries of the matrices representing boundary maps are forms of positive degrees.  As a consequence, $G_{r}\cong \underset {s \in \Z}{\oplus}R(-s)^{b_M^R(r,s)}$ where $b_M^R(r,s)= \lambda(\text{Tor}_r^R(k,M)_s)$.

\begin{definition}\label{graded Betti numbers notation}
Let $M$ be a finitely generated $\Z$-graded $R$-module. The $r$-th \textit{Betti number of $M$ with respect to $R$} is the rank of the free module $G_r$ at the $r$-th spot in a minimal graded free resolution of $M$, or equivalently, the length $\lambda(\text{Tor}_r^R(k,M))$.
\end{definition}
\begin{definition}\label{graded complete intersection}
The $\N$-graded ring $R$ is a \textit{graded complete intersection over $k$} if $R \cong \frac{k[X_1, \ldots, X_s]}{(f_1, \ldots, f_h)}$- where each $X_j$ is homogeneous of positive degree and $f_1, \ldots, f_h$ is a regular sequence consisting of homogeneous polynomials.
\end{definition}
\noindent We recall a special case of a result in \cite{Gul}.
\begin{lemma} \label{growth in the complete intersection case}
Let $R$ be a graded complete intersection over $k$. Then for any finitely generated $\Z$-graded $R$-module $M$, there is polynomial $P_M(t) \in \mathbb{Z}[t]$ such that for all $n$, $\lambda(\text{Tor}_n^R(M,k)) \leq P_M(n)$. 
\end{lemma}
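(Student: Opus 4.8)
The plan is to reduce the statement to the structure theory of minimal free resolutions over a graded complete intersection, where the key input is that the Poincar\'e series of any finitely generated module is rational with a very controlled denominator. Write $R \cong A/(f_1,\ldots,f_h)$ with $A = k[X_1,\ldots,X_s]$, the $X_j$ homogeneous of positive degree and $f_1,\ldots,f_h$ a homogeneous regular sequence of degrees $d_1,\ldots,d_h$. First I would recall Gulliksen's theorem: for a finitely generated graded $R$-module $M$, the graded Poincar\'e series $\sum_{n\ge 0}\lambda(\mathrm{Tor}^R_n(M,k))\,t^n$ (forgetting the internal grading, just as an ungraded count, which is the relevant datum here since the statement only asks about the total length $\lambda(\mathrm{Tor}^R_n(M,k))$) is of the form $\dfrac{Q_M(t)}{(1-t^2)^{h}}$ for some polynomial $Q_M(t) \in \Z[t]$. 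Equivalently, the Ext-module $\bigoplus_n \mathrm{Ext}^n_R(M,k)$ is a finitely generated graded module over the polynomial ring of Eisenbud operators $k[\chi_1,\ldots,\chi_h]$ with $\deg \chi_j = 2$; finite generation over this polynomial ring is exactly what forces the denominator $(1-t^2)^h$.

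From rationality of the Poincar\'e series with denominator $(1-t^2)^h$, the coefficient sequence $n \mapsto \lambda(\mathrm{Tor}^R_n(M,k))$ is \emph{eventually} given by a quasi-polynomial in $n$ of period dividing $2$ and degree at most $h-1$ — i.e. there are polynomials $P^{\mathrm{ev}}, P^{\mathrm{odd}} \in \Q[t]$ with $\lambda(\mathrm{Tor}^R_n(M,k)) = P^{\mathrm{ev}}(n)$ for $n \gg 0$ even and $= P^{\mathrm{odd}}(n)$ for $n \gg 0$ odd. Set $P_M(t)$ to be any single polynomial with integer coefficients that dominates both branches for all large $t$; for instance take a large enough integer multiple of $t^{h}$ plus a constant. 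This handles all but finitely many $n$. For the finitely many small $n$ not yet covered, each $\lambda(\mathrm{Tor}^R_n(M,k))$ is just a finite nonnegative integer, so after enlarging the constant term of $P_M(t)$ (or adding $C t^{h}$ for a sufficiently large integer $C$) one gets $\lambda(\mathrm{Tor}^R_n(M,k)) \le P_M(n)$ for every $n \ge 0$. Since we only need an upper bound, there is no need to track the exact leading coefficients or the precise threshold $n \gg 0$; crude estimates suffice.

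I would organize the write-up as: (1) cite \cite{Gul} for rationality of the Poincar\'e series with denominator a power of $(1-t^2)$; (2) a short lemma that a sequence of nonnegative integers whose generating function is $Q(t)/(1-t^2)^h$ with $Q \in \Z[t]$ is bounded above by an explicit integer polynomial in the index — proved by partial fractions, or more elementarily by noting that the $n$-th coefficient of $1/(1-t^2)^h$ is $\binom{(n/2)+h-1}{h-1}$ or $0$ according to the parity of $n$, hence $O(n^{h-1})$, and that convolving with the fixed polynomial $Q$ preserves polynomial growth; (3) absorb the finitely many initial terms into the constant. The main (and really the only) obstacle is invoking the correct form of Gulliksen's result: his theorem is usually stated in terms of finite generation of Ext over the ring of cohomology operators, and one must make sure the graded/ungraded bookkeeping is consistent and that ``eventually quasi-polynomial'' is translated correctly into an everywhere-valid polynomial upper bound. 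Everything after that is elementary generating-function manipulation.
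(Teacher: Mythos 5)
Your proposal is correct and follows essentially the same route as the paper: cite Gulliksen for the rationality of the Poincar\'e series $\sum_n \lambda(\mathrm{Tor}_n^R(M,k))\,t^n = \pi(t)/(1-t^2)^r$ with integer numerator, then extract a polynomial bound on the coefficients from the expansion of $1/(1-t^2)^r$. The paper additionally passes through the localization $R_m$ to invoke Gulliksen's local statement, but that is a cosmetic step given $\mathrm{Tor}_n^R(M,k)\cong\mathrm{Tor}_n^{R_m}(M_m,k)$; the rest of your reasoning (quasi-polynomial of period $2$, absorbing finitely many initial terms) is exactly the elaboration the paper compresses into one sentence.
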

\begin{proof}
Let $m$ be the homogeneous maximal ideal of $R$. Then $R_m$ is a local complete intersection as is meant in Corollary 4.2, \cite{Gul}. Since for all $n \in \N$, $\text{Tor}_n^R(M,k) \cong \text{Tor}^{R_m}_n(M_m,k)$, using Corollary 4.2, \cite{Gul}, we have a polynomial $\pi(t) \in \Z[t]$ and $r \in \N$ such that, $\sum \limits_{n=0}^{\infty}\lambda(\text{Tor}_n^R(M,k))t^n= \frac{\pi(t)}{(1-t^2)^r}$. The assertion in \Cref{growth in the complete intersection case} now follows by using the formal power series in $t$ representing $\frac{1}{(1-t^2)^r}$.
\end{proof}
\begin{lemma}\label{alternating sum of twists} Let $R$ be a finitely generated $\N$-graded $k$-algebra. Let $M$ be a finitely generated $\mathbb Z$-graded $R$-module. There is a positive integer $l$ such that given any integer $s$, $b^R_M(r,s):= \lambda(\text{Tor}^R_r(M,k)_s)=0$ for all $r \geq s+l$.
\end{lemma}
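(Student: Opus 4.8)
The plan is to read the vanishing directly off a minimal graded free resolution of $M$ over $R$, by tracking the lowest degree occurring in each homological position and showing it grows at least linearly in the homological index. I will freely use the setup of \Cref{Homological}: a minimal graded free resolution $(G_\bullet, d_\bullet)$ of $M$ exists, with $G_r \cong \underset{s \in \Z}{\oplus} R(-s)^{b^R_M(r,s)}$, the boundary maps are degree-preserving, and the entries of the matrices of the $d_r$ are forms of positive degree; equivalently $d_r(G_r) \subseteq m G_{r-1}$, where $m = R_+$ is the homogeneous maximal ideal.

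For each $r$ with $G_r \neq 0$, set $\beta_r := \min\{ s \in \Z : b^R_M(r,s) \neq 0 \}$, the least degree of a basis element of $G_r$; since $M$ is finitely generated and $\Z$-graded, $\beta_0$ is a well-defined integer (the least degree of a minimal homogeneous generator of $M$). The key step is the inequality $\beta_r \geq \beta_{r-1} + 1$ for every $r \geq 1$ with $G_r \neq 0$. To prove it I would choose a basis element $g$ of $G_r$ of degree $\beta_r$. By minimality $\ker d_r = \operatorname{im} d_{r+1} \subseteq m G_r$, and a basis element of the free module $G_r$ does not lie in $m G_r$, so $d_r(g) \neq 0$. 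Expanding $d_r(g) = \sum_i c_i f_i$ in a homogeneous basis $\{f_i\}$ of $G_{r-1}$, each nonzero coefficient $c_i$ is a homogeneous entry of the matrix of $d_r$, hence lies in $m$ and has $\deg c_i \geq 1$; comparing degrees in the degree-preserving map $d_r$ gives $\beta_r = \deg c_i + \deg f_i \geq 1 + \beta_{r-1}$ for at least one $i$. Iterating yields $\beta_r \geq \beta_0 + r$ for every $r$ with $G_r \neq 0$.

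To finish, take $l := \max\{\,1,\ 1 - \beta_0\,\}$, which is a positive integer and satisfies $l \geq 1 - \beta_0$. Given $r \geq s + l$: if $G_r = 0$ then $b^R_M(r,s) = 0$ trivially; otherwise $s \leq r - l \leq r - 1 + \beta_0 < \beta_0 + r \leq \beta_r$, so $b^R_M(r,s) = 0$ by the definition of $\beta_r$. This is exactly the claim. I do not expect a serious obstacle here; the only points needing care are the correct use of the minimality hypothesis (entries of the boundary matrices are forms of positive degree, so $\operatorname{im} d_{r+1} \subseteq m G_r$), the observation that a basis element realizing $\beta_r$ is never in $m G_r$, and the minor bookkeeping that forces $l$ to be chosen larger than $1$ when $M$ has minimal generators in negative degree. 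One could alternatively reduce to the polynomial ring via a change-of-rings spectral sequence, but the direct resolution argument above is cleaner and self-contained.
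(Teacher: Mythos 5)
Your proof is correct and follows essentially the same approach as the paper: both exploit a minimal graded free resolution and the fact that the minimum degree $\beta_r$ (the paper's $\phi(r)$) of a basis element of $G_r$ is strictly increasing in $r$, then choose $l$ accordingly. Your version is more detailed in justifying the strict increase and explicitly covers the case where the resolution terminates (some $G_r=0$), a corner case the paper's argument glosses over; otherwise the two are the same.
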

\begin{proof}
Pick a minimal free resolution $(G_ \bullet, d_ \bullet)$ of $M$, then $G_r \cong \underset{j \in \Z}{\oplus}R(-j)^{b^R_M(r,j)}$. Since the boundary maps of $G_ \bullet$ are represented by matrices whose entries are positive degree forms and have non-zero columns, $\phi(r):= \text{min}\{j \, | \, b^R_M(r,j) \neq 0\}$ is a strictly increasing function of $r$. So we can choose an integer $l$ such that $\phi(l)>0$. Again since $\phi(r)$ is strictly increasing, for all $r \geq j+l$, $\phi(r) \geq \phi(l)+j>j$. So given an integer $s$, $b^R_M(r,s)=0$ for all $r \geq s+l$.
\end{proof}
\begin{lemma}\label{radius of conv of twisted Poincare series}
 Let $R$ be a graded complete intersection over $k$ and $M$ be a finitely generated $\Z$-graded $R$-module. For a given integer $s$, let $\mathbb{B}_M(s)$ denote the sum $\underset{r \in \N}{\sum}(-1)^r{b_M^R(r,s)}$. Then the formal Laurent series $\underset{s \in \Z}{\sum} \mathbb{B}_M(s)t^s$ is absolutely convergent at every non-zero point on the open unit disk centered at the origin in $\mathbb{C}$.
\end{lemma}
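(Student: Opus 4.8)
The plan is to show that the coefficients $\mathbb{B}_M(s)$ grow at most polynomially in $s$, after which absolute convergence on the punctured open unit disk is immediate from the root test. First I would dispose of the part of the Laurent series with negative exponents. Since $M$ is finitely generated, a minimal graded free resolution $(G_\bullet, d_\bullet)$ of $M$ has all its free modules generated in degrees $\geq a$, where $a$ is the least degree of a minimal generator of $M$ (this is exactly the observation at the start of the proof of \Cref{alternating sum of twists}: the function $\phi(r)=\min\{s \mid b_M^R(r,s)\neq 0\}$ is non-decreasing with $\phi(0)=a$). Hence $b_M^R(r,s)=0$ for every $r$ whenever $s<a$, so $\mathbb{B}_M(s)=0$ for $s<a$; the Laurent series has only finitely many terms $t^s$ with $s<0$, and these contribute a finite amount at any $t\neq 0$. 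It therefore suffices to bound $\sum_{s\geq a}|\mathbb{B}_M(s)|\,|t|^s$.

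Next, fix $s\geq a$. By \Cref{alternating sum of twists} there is an integer $l$, independent of $s$, with $b_M^R(r,s)=0$ for all $r\geq s+l$, so
$$\mathbb{B}_M(s)=\sum_{r=0}^{s+l-1}(-1)^r b_M^R(r,s)$$
is a \emph{finite} alternating sum of at most $s+l$ terms. Using $b_M^R(r,s)\leq \lambda(\mathrm{Tor}_r^R(M,k))$ together with the polynomial bound $\lambda(\mathrm{Tor}_r^R(M,k))\leq P_M(r)$ from \Cref{growth in the complete intersection case} (available precisely because $R$ is a graded complete intersection), I obtain
$$|\mathbb{B}_M(s)|\;\leq\;\sum_{r=0}^{s+l-1} b_M^R(r,s)\;\leq\;\sum_{r=0}^{s+l-1} P_M(r).$$
Since $P_M$ dominates a length, $P_M(r)\geq 0$ for all $r\in\N$, so the right-hand side is a nonnegative polynomial in $s$ of degree $\deg P_M+1$; call it $Q(s)$.

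Finally, with this bound, for every $t$ with $0<|t|<1$ we have
$$\sum_{s\in\Z}|\mathbb{B}_M(s)|\,|t|^s=\sum_{s\geq a}|\mathbb{B}_M(s)|\,|t|^s\leq \sum_{s\geq a}Q(s)\,|t|^s<\infty,$$
the last series converging because $\limsup_{s\to\infty}\bigl(Q(s)\,|t|^s\bigr)^{1/s}=|t|<1$. This establishes absolute convergence at every nonzero point of the open unit disk. The only substantive input is Gulliksen's polynomial growth bound (\Cref{growth in the complete intersection case}); without the complete intersection hypothesis the Betti numbers, hence the $\mathbb{B}_M(s)$, could a priori grow faster than any polynomial and the root-test argument would fail. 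Thus the ``hard part'' is already encapsulated in the recalled lemmas, and what remains is the elementary bookkeeping above — the one place to be slightly careful is combining the \emph{linear-in-$s$} bound on the number of nonzero terms (\Cref{alternating sum of twists}) with the \emph{polynomial-in-$r$} bound on each term (\Cref{growth in the complete intersection case}) to get a single polynomial bound in $s$.
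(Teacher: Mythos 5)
Your proof is correct and follows essentially the same route as the paper's: truncate the sum defining $\mathbb{B}_M(s)$ to $r < s+l$ via \Cref{alternating sum of twists}, then invoke Gulliksen's polynomial growth bound (\Cref{growth in the complete intersection case}) to dominate $|\mathbb{B}_M(s)|$ by a polynomial in $s$, and finish with the root test. The one place you are more careful than the paper's terse proof is in explicitly noting that $\mathbb{B}_M(s)=0$ for $s$ below the minimal generating degree of $M$, so the Laurent series has only finitely many negative-exponent terms; the paper tacitly treats $\sum_{s\geq 0}|\mathbb{B}_M(s)|t^s$ as if it were the whole series, which is a harmless but genuine omission you have filled in.
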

\begin{proof}
By \Cref{alternating sum of twists}, there is an $l \in \N$ such that for any integer $s$, $|\mathbb{B}_M(s)| \leq \sum \limits_{j=0}^{s+l} \lambda(\text{Tor}_j^R(M,k))$. With $P_M$ the same as in \Cref{growth in the complete intersection case}, consider the polynomial 
\[Q_M(s)= \sum _{j=0}^{s+l}P_M(j).\]
Thus for $s \in \N$, $|\mathbb{B}_M(s)| \leq Q_M(s)$. So the radius of convergence of the power series $\sum \limits_{s=0}^{\infty}|\mathbb{B}_M(s)|t^s$ is at least one and the desired conclusion follows.
\end{proof}
\subsection{Hilbert series and Hilbert-Samuel multiplicities}\label{Hilbert series and Hilbert-Samuel multiplicity}
The references for this subsection are \cite{BH} and \cite{Serre}. Throughout, $R$ is a finitely generated $\N$-graded algebra over a field $k$.  Recall that the \textit{Hilbert series}(also called the \textit{Hilbert-Poincar\'e series}) of a finitely generated $\Z$-graded $R$-module $M$ is the formal Laurent series $H_M(t):= \underset{n \in \Z}{\sum}\lambda(M_n)t^n$. 

\begin{theorem}\label{rationality of Poincare series}(see Proposition 4.4.1, \cite{BH}) Let $M$ be a finitely generated $\Z$-graded $R$-module.
\begin{enumerate}
    \item There is a Laurent polynomial $Q_M(t) \in \mathbb{Q}[t,t^{-1}]$ such that,
    $$H_M(t)= \frac{Q_M(t)}{(1-t^{\delta_1}) \ldots (1-t^{\delta_{\text{dim(M)}}})}$$
    for some non-negative integers $\delta_1, \ldots, \delta_{\text{dim}(M)}$.
    \item The choice of $Q_M$ depends on the choices of $\delta_1, \ldots, \delta_{\text{dim}(M)}$. One can choose $\delta_1, \ldots, \delta_{\text{dim}(M)}$ to be the degrees of elements of $\frac{R}{\text{Ann(M)}}$ forming a homogeneous system of parameters. \footnote{A homogeneous system of parameters of a finitely generated $\N$-graded $k$-algebra $S$, is a collection of homogeneous elements $f_1, \ldots, f_{\text{dim}(S)}$ such that $\frac{S}{(f_1, \ldots, f_{\text{dim}(S)})}$ has a finite length (see \cite{BH}, page 35).}
\end{enumerate}
\end{theorem}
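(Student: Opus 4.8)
The statement is the classical rationality theorem for Hilbert series, and the plan is to prove both parts at once by induction, after first replacing it by a slightly more flexible claim. Precisely, I would prove: \emph{if $g_1, \dots, g_m \in R$ are homogeneous of positive degrees $\epsilon_1, \dots, \epsilon_m$ and $M/(g_1, \dots, g_m)M$ has finite length, then $H_M(t) = Q(t)/\prod_{j=1}^{m}(1 - t^{\epsilon_j})$ for some $Q \in \mathbb{Q}[t, t^{-1}]$.} Part (1) follows by taking the $g_j$ to be homogeneous lifts to $R$ of a homogeneous system of parameters of $R/\mathrm{Ann}(M)$, so that $m = \dim(M)$ and $M/(g_1,\dots,g_m)M$ is a finitely generated module over the Artinian ring $R/(\mathrm{Ann}(M)+(g_1,\dots,g_m))$, hence of finite length; part (2) is this same choice, together with the observation that these lifts have exactly the prescribed degrees $\delta_j$ and that $\dim(M/g_1M) = \dim(M) - 1$ (since $g_1$ comes from a system of parameters), so that the induction uses precisely $\dim(M)$ factors rather than more. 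The point of the reformulation is that, unlike a naive induction on $\dim(M)$, it is stable under the two operations produced by multiplication by a single $g_1$ --- passing to the cokernel \emph{and} to the kernel --- which is exactly what the induction consumes.

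For the base case $m = 0$ the hypothesis says $M$ itself has finite length, so only finitely many $M_n$ are nonzero and $H_M(t)$ is a Laurent polynomial. For the inductive step I would view multiplication by $g_1$ as a degree-zero map $M(-\epsilon_1) \xrightarrow{g_1} M$ and form the exact sequence
$$0 \longrightarrow N \longrightarrow M(-\epsilon_1) \xrightarrow{g_1} M \longrightarrow M/g_1 M \longrightarrow 0,$$
where $N$ is the shifted annihilator of $g_1$ in $M$. Additivity of Hilbert series on this four-term sequence --- valid because finitely generated $\Z$-graded modules over $R$ have finite-dimensional graded pieces and grading bounded below --- together with $H_{M(-\epsilon_1)}(t) = t^{\epsilon_1}H_M(t)$, gives
$$(1 - t^{\epsilon_1})\,H_M(t) = H_{M/g_1 M}(t) - H_N(t).$$
Then I would check that $g_2, \dots, g_m$ verify the reformulated hypothesis for both modules on the right: for $M/g_1M$ this is immediate, since $(M/g_1M)/(g_2,\dots,g_m)(M/g_1M) = M/(g_1,\dots,g_m)M$; for $N$, since $g_1 N = 0$ one has $\mathrm{Supp}(N) \subseteq \mathrm{Supp}(M) \cap V(g_1)$, so $N/(g_2,\dots,g_m)N$ is supported inside $\mathrm{Supp}(M)\cap V(g_1,\dots,g_m) = \mathrm{Supp}(M/(g_1,\dots,g_m)M)$ and hence has finite length. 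Applying the inductive hypothesis, $H_{M/g_1M}(t)$ and $H_N(t)$ are each a Laurent polynomial over the common denominator $\prod_{j=2}^{m}(1-t^{\epsilon_j})$; substituting into the displayed identity and clearing denominators yields $H_M(t) = Q(t)/\prod_{j=1}^{m}(1 - t^{\epsilon_j})$ with $Q \in \mathbb{Q}[t,t^{-1}]$, which completes the induction.

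The only genuine subtlety I anticipate is the bookkeeping that makes the induction self-supporting: verifying the finite-length statement for the kernel $N$ through the support inclusions above, and --- the real reason for the reformulation --- having arranged that the \emph{same} truncated list $g_2, \dots, g_m$ governs both $N$ and $M/g_1 M$, so that their Hilbert series sit over a common denominator and can be subtracted cleanly. The remaining ingredients --- additivity of Hilbert series over the exact sequence, the shift identity $H_{M(-\epsilon_1)}(t) = t^{\epsilon_1}H_M(t)$, and the deduction of (1) and (2) from the reformulated claim --- are routine.
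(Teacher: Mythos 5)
Your argument is correct, and the bookkeeping you flag as the real issue (that the kernel $N$ and the cokernel $M/g_1M$ must be handled with the \emph{same} truncated list $g_2,\dots,g_m$, with finite length of $N/(g_2,\dots,g_m)N$ checked via the support inclusion $\mathrm{Supp}(N)\subseteq \mathrm{Supp}(M)\cap V(g_1)$) is exactly the point that makes the induction close. Note that the paper does not prove this statement at all --- it is quoted from Proposition 4.4.1 of Bruns--Herzog --- and your proof is essentially the standard Hilbert--Serre induction behind that citation, refined so that the denominator has precisely $\dim(M)$ factors coming from a homogeneous system of parameters of $R/\mathrm{Ann}(M)$, which is what part (2) asks for; so there is nothing in the paper to diverge from, and your write-up would serve as a self-contained substitute for the reference.
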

 In \Cref{Samuel multiplicity in terms of Hilbert-Poincare series}, we extend part of
Proposition 4.1.9 of \cite{BH}- where $R$ is assumed to be standard graded- to our setting. We use \Cref{Samuel multiplicity in terms of Hilbert-Poincare series} to define Hilbert-Samuel multiplicity of a finitely generated $\Z$-graded module over a graded ring- where the ring is not necessarily standard graded-in part (1), \Cref{HS multiplicity}. 
\begin{proposition}\label{Samuel multiplicity in terms of Hilbert-Poincare series}
Let $M$ be a finitely generated $\Z$-graded $R$-module of Krull dimension d. Denote the Poincar\'e series of $M$ by $H_M(t)$. 
\begin{enumerate}
    \item The limit $d! \underset{n \to \infty}{\lim}\frac{1}{n^d}(\underset{j \leq n}{\sum} \lambda(M_j))$ exists . The limit is denoted by $e_M$.
    \item The limit $\underset{t \to 1}{\lim}(1-t)^dH_M(t)$ is the same as $e_M$.
\end{enumerate}
\end{proposition}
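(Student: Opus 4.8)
The plan is to deduce both statements from the rational form of the Hilbert series given by \Cref{rationality of Poincare series}, followed by a routine partial-fraction analysis. I would first dispose of the case $d=0$: then $M$ has finite length, $H_M(t)$ is a Laurent polynomial, and both $0!\lim_n\sum_{j\le n}\lambda(M_j)$ and $\lim_{t\to1}H_M(t)$ equal $\lambda(M)$, so there is nothing further to check. Assume now $d\ge1$ and, using \Cref{rationality of Poincare series}, write $H_M(t)=Q_M(t)/\prod_{i=1}^d(1-t^{\delta_i})$, where $\delta_1,\dots,\delta_d$ are the degrees of a homogeneous system of parameters of $R/\text{Ann}(M)$; since a homogeneous system of parameters of a finitely generated $\N$-graded $k$-algebra consists of elements of positive degree, each $\delta_i\ge1$.

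For part (2) I would just compute
$$(1-t)^dH_M(t)=Q_M(t)\prod_{i=1}^d\frac{1-t}{1-t^{\delta_i}}=Q_M(t)\prod_{i=1}^d\frac{1}{1+t+\cdots+t^{\delta_i-1}},$$
a ratio of polynomials whose denominator does not vanish at $t=1$; hence the limit as $t\to1$ exists and equals $Q_M(1)/(\delta_1\cdots\delta_d)$. Denote this number by $e_M$. It then remains to show that the limit in (1) exists and equals this same $e_M$, which proves (1) and the equality asserted in (2) at once.

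For (1), I would take the partial-fraction decomposition of the rational function $H_M(t)$ over $\mathbb{C}$. Its poles lie among roots of unity and have order at most $d$, so
$$H_M(t)=L(t)+\sum_{\zeta}\sum_{j=1}^{m_\zeta}\frac{a_{\zeta,j}}{(1-t/\zeta)^j},$$
with $L(t)$ a Laurent polynomial, $\zeta$ ranging over a finite set of roots of unity, and $m_\zeta\le d$; multiplying by $(1-t)^d$ and letting $t\to1$ identifies $a_{1,d}=e_M$ (with $a_{1,d}=0$ if the pole at $1$ has order $<d$, which by the same computation forces $e_M=0$). Expanding each summand as a power series in $t$ gives, for $n\gg0$,
$$\lambda(M_n)=\frac{e_M}{(d-1)!}\,n^{d-1}+r(n)+\sum_{\zeta\ne1}p_\zeta(n)\,\zeta^{-n},$$
where $r(n)=O(n^{d-2})$ absorbs the subleading contribution of the pole at $t=1$ and each $p_\zeta$ is a polynomial of degree $\le d-1$. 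Summing over $j\le n$: the first term contributes $\frac{e_M}{d!}n^d+O(n^{d-1})$ by Faulhaber's formula, the term $r$ contributes $O(n^{d-1})$, and each $\sum_{j\le n}p_\zeta(j)\,\zeta^{-j}$ is $O(n^{d-1})$ by Abel summation, the partial sums of $\sum_j\zeta^{-j}$ being bounded because $\zeta\ne1$. Hence $\sum_{j\le n}\lambda(M_j)=\frac{e_M}{d!}n^d+O(n^{d-1})$, so the limit in (1) exists and $d!\lim_n\frac{1}{n^d}\sum_{j\le n}\lambda(M_j)=e_M=\lim_{t\to1}(1-t)^dH_M(t)$.

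The only input of substance is \Cref{rationality of Poincare series}, which is quoted; the rest is bookkeeping, and the one place needing a little attention is the Abel-summation estimate that keeps the contribution of the roots of unity $\zeta\ne1$ (and of the subleading part of the pole at $t=1$) down to order $n^{d-1}$ after passing to partial sums. Two alternatives I would keep in mind: invoking a Hardy--Littlewood Tauberian theorem on the relation $(1-t)^dH_M(t)\to e_M$ from part (2), using $\lambda(M_n)\ge0$; or reducing to the standard-graded case of Proposition 4.1.9 of \cite{BH} by passing to a Veronese subalgebra $R^{(c)}$ for a suitable $c$ and splitting $M$ into its residue classes modulo $c$. I would present the self-contained partial-fraction version above.
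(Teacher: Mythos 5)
Your argument is correct, but it takes a genuinely different route from the paper. The paper proves both parts by reducing to the standard-graded case: it replaces the homogeneous system of parameters $f_1,\dots,f_d$ of $R/\mathrm{Ann}(M)$ by powers $g_j=f_j^{\delta/\delta_j}$ of a common degree $\delta$, regrades the subring $S=k[g_1,\dots,g_d]$ so that it is standard graded, splits $M=\bigoplus_{r=0}^{\delta-1}M^r$ into its congruence classes of degrees modulo $\delta$ (each a finitely generated graded $S$-module), and then quotes the standard-graded facts (Theorem 4.1.3, Proposition 4.1.9, Remark 4.1.6 of \cite{BH}) for each $M^r$, patching the pieces back together both for the Ces\`aro-type limit in (1) (via the subsequence $n\equiv\delta-1 \pmod\delta$ together with the bound $\lambda(M_n)\le Cn^{d-1}$) and for the limit of $(1-t)^dH_M(t)$ in (2). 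This is essentially the ``Veronese'' alternative you mention at the end. Your route instead uses only the rationality statement \Cref{rationality of Poincare series} and then does a self-contained partial-fraction analysis: the identification $e_M=Q_M(1)/(\delta_1\cdots\delta_d)$ for (2) is immediate, and for (1) the expansion of the poles at roots of unity plus Abel summation gives the sharper asymptotic $\sum_{j\le n}\lambda(M_j)=\frac{e_M}{d!}n^d+O(n^{d-1})$, from which the limit follows; your handling of the degenerate cases ($d=0$, pole of order $<d$ at $t=1$, the finitely many negative-degree pieces, and the positivity of the $\delta_i$ coming from part (2) of \Cref{rationality of Poincare series}) is all in order. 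What each approach buys: the paper's reduction leans on the cited results of \cite{BH} and avoids any analytic bookkeeping, while yours is more elementary and self-contained given rationality, and yields an explicit error term rather than bare existence of the limit; the one step that genuinely needs care in your version, the Abel-summation bound $O(n^{d-1})$ for the contributions of roots of unity $\zeta\neq1$, is correctly identified and correctly estimated.
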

\vspace{.2cm}
\begin{proof}[Proof of \Cref{Samuel multiplicity in terms of Hilbert-Poincare series}]
When $M$ has Krull dimension zero, the desired conclusion is immediate. So we assume that $M$ has a positive Krull dimension. We first prove (1).

Let $f_1, \ldots, f_d$ be a homogeneous system of parameters of $\frac{R}{\text{Ann}(M)}$ of degree $\delta_1, \ldots, \delta_d$ respectively. Set $\delta$ to be the product $\delta_1 \ldots \delta_d$ and $g_j= f_j^{\frac{\delta}{\delta_j}}$. Then each of $g_1, \ldots, g_d$ has degree $\delta$ and these form a homogeneous system of parameters of $\frac{R}{\text{Ann}(M)}$. Denote the $k$-subalgebra generated by $g_1, \ldots, g_d$ by $S$. We endow $S$ with a new $\N$-grading: given a natural number $n$, declare the $n$-th graded piece of $S$ to be
$$S_n := S \cap (\frac{R}{\text{Ann}(M)})_{\delta n}.$$
From now on, by the grading on $S$ we refer to the grading defined above. Note that $S$ is a standard graded $k$-algebra. Now for each $r$, where $0 \leq r < \delta$, set
$$M^{r}= \underset{n \in \Z}{\oplus} M_{n \delta+ r}.$$
Given an $r$ as above, we give $M^{r}$ a $\Z$-graded structure by declaring the $n$-th graded piece of $M^r$ to be $M_{n \delta+r}$. Then each $M^{r}$ is a finitely generated $\Z$-graded module over $S$. Since $S$ is standard graded, for each $r$, $0 \leq r < \delta$, the limit
$$\underset{n \to \infty}{\lim}\frac{1}{n^{d-1}}\lambda(M^r_n)$$
exists (see Theorem 4.1.3, \cite{BH}). This implies the existence of a constant $C$ such that $\lambda(M_n) \leq Cn^{d-1}$ for all $n$. So the sequence $\frac{d!}{n^d}(\underset{j \leq n}{\sum} \lambda(M_j) )$ converges if and only if the subsequence $$\left(\frac{d!}{(\delta n+ \delta -1)^d}(\underset{j \leq (n+1) \delta -1}{\sum}\lambda(M_j)) \right)_n$$ converges. Now we show that the above subsequence is convergent by computing its limit. 
\begin{equation}\label{existence of limit}
\begin{split}
& d!\underset{n \to \infty}{\lim}\frac{1}{(\delta n+ \delta -1)^d}(\underset{j \leq (n+1) \delta -1}{\sum}\lambda(M_j))\\
    & = \underset{n \to \infty}{\lim} \frac{n^d}{(\delta n + \delta -1)^d} [\sum \limits_{r=0}^{\delta -1}\frac{d!}{n^d}(\underset{j \leq n }{\sum}\lambda(M^r_j))]
\end{split}
\end{equation}
Since each $M^r$ where $0 \leq r \leq \delta -1$ is a finitely generated module over the standard graded ring $S$, by Proposition 4.1.9 and Remark 4.1.6 of \cite{BH}, the last limit in \eqref{existence of limit} exists and
\begin{equation}\label{limit of subsequence equal to HS multiplicity}
    e_M = \frac{e_{M^0}+ \ldots+ e_{M^{\delta -1}}}{\delta ^d}.
\end{equation}
For (2), note that
\begin{equation}\label{limit and Poincare series}
\begin{split}
\underset{t \to 1}{\lim}(1-t)^dH_M(t) &= \underset{t \to 1}{\lim} \sum \limits_{r=0}^{\delta -1}(1-t)^dH_{M^r}(t^ \delta)t^r\\
&= \underset{t \to 1}{\lim}\frac {\sum \limits_{r=0}^{\delta -1}(1-t^\delta)^dH_{M^r}(t^ \delta)t^r}{(1+t+ \ldots+ t^{\delta -1})^d}.
\end{split}
\end{equation}
Again, since each $M^r$ is a finitely generated module over the standard graded ring $S$, by Proposition 4.1.9 and Remark 4.1.6 of \cite{BH}
$$e_{M^r}= \underset{t \to 1}{\lim}(1-t)^d H_{M^r}(t).$$
So from \eqref{limit and Poincare series} and \eqref{limit of subsequence equal to HS multiplicity}, we get
\begin{equation*}
 \underset{t \to 1}{\lim}(1-t)^d H_M(t)= \frac{e_{M^0}+ \ldots+ e_{M^{\delta -1}}}{\delta ^d}= e_M.
\end{equation*}
\end{proof}
\begin{definition}\label{HS multiplicity}
Let $M$ be a finitely generated $\Z$-graded $R$-module of Krull dimension $d$.
\begin{enumerate}
\item The \textit{Hilbert-Samuel multiplicity} of $M$ is defined to be the limit
$$d!\underset{n \to \infty}{\lim}\frac{1}{n^d}(\underset{j \leq n}{\sum}\lambda(M_j))$$ and denoted by $e_M$. The limit exists by (1) \Cref{Samuel multiplicity in terms of Hilbert-Poincare series}.
\item Given a homogeneous ideal $I$ of finite co-length, the \textit{Hilbert-Samuel multiplicity of $M$ with respect to $I$} is defined to be the limit:
$$d!\underset{n \to \infty}{\lim}\frac{1}{n^d}\lambda(\frac{M}{I^nM}).$$
\end{enumerate}
\end{definition}

\noindent
\begin{proposition}\label{HS multiplicity with respect to SOP}
Let $f_1, \ldots, f_d$ be a homogeneous system of parameters of $R$ of degree $\delta_1, \ldots, \delta_d$ respectively. Then the Hilbert-Samuel multiplicity of $R$ with respect to $(f_1, \ldots, f_d)$ (see \Cref{HS multiplicity}) is $\delta_1 \ldots \delta_d e_R$.
\end{proposition}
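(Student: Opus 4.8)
The plan is to pass to the Noether normalization determined by the given system of parameters and compare multiplicities there. Since $f_1,\dots,f_d$ is a homogeneous system of parameters, $R$ is module-finite over the subring $A=k[f_1,\dots,f_d]\subseteq R$, $\dim A=\dim R=d$, the elements $f_1,\dots,f_d$ are algebraically independent over $k$ (so $A$ is a graded polynomial ring with $\deg f_i=\delta_i$), and $A\hookrightarrow R$. Writing $\mathfrak n=(f_1,\dots,f_d)A$ for the homogeneous maximal ideal of $A$, we have $(f_1,\dots,f_d)^nR=\mathfrak n^nR$ for all $n$, and since $R$ and $A$ both have degree-zero piece $k$, the functions $\lambda_R$, $\lambda_A$ and $\dim_k$ agree on graded finite-length modules. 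Hence the Hilbert--Samuel multiplicity of $R$ with respect to $(f_1,\dots,f_d)$ (\Cref{HS multiplicity}) equals $d!\lim_n \lambda_A(R/\mathfrak n^nR)/n^d$, i.e.\ the $\mathfrak n$-adic Hilbert--Samuel multiplicity $e(\mathfrak n,R)$ of $R$ viewed as an $A$-module of dimension $d$; so it suffices to prove $e(\mathfrak n,R)=\delta_1\cdots\delta_d\,e_R$.

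Next I would fix a graded prime filtration $0=R_0\subsetneq R_1\subsetneq\cdots\subsetneq R_\ell=R$ of $R$ over $A$, with $R_j/R_{j-1}\cong(A/\mathfrak p_j)(a_j)$ for homogeneous primes $\mathfrak p_j\subseteq A$. As $A$ is a $d$-dimensional domain, $\dim A/\mathfrak p_j=d$ forces $\mathfrak p_j=(0)$, and localizing the filtration at $(0)$ gives $\#\{j:\mathfrak p_j=(0)\}=\text{rank}_A(R)$ (which is $\geq 1$ since $A\hookrightarrow R$). Now I would run two computations along this filtration. (i) Using that the $\mathfrak n$-adic Hilbert--Samuel multiplicity is additive on short exact sequences of modules of dimension $\leq d$ (lower-dimensional modules contributing $0$), together with $e(\mathfrak n,A/\mathfrak p_j)=0$ unless $\mathfrak p_j=(0)$, one gets $e(\mathfrak n,R)=\text{rank}_A(R)\cdot e(\mathfrak n,A)$; and $e(\mathfrak n,A)=1$ because $A/\mathfrak n^n$ has $k$-basis the monomials $f_1^{b_1}\cdots f_d^{b_d}$ with $b_1+\dots+b_d<n$, so $\lambda(A/\mathfrak n^n)=\binom{n-1+d}{d}$. (ii) Since the filtration is by graded submodules, $H_R(t)=\sum_j t^{-a_j}H_{A/\mathfrak p_j}(t)$; by \Cref{Samuel multiplicity in terms of Hilbert-Poincare series} the limit $\lim_{t\to 1}(1-t)^dH_{A/\mathfrak p_j}(t)$ vanishes unless $\dim A/\mathfrak p_j=d$, so $e_R=\lim_{t\to1}(1-t)^dH_R(t)=\text{rank}_A(R)\cdot e_A$, where $e_A=\lim_{t\to1}(1-t)^d/\prod_{i=1}^d(1-t^{\delta_i})=1/(\delta_1\cdots\delta_d)$ using $H_A(t)=\prod_{i=1}^d(1-t^{\delta_i})^{-1}$.

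Putting (i) and (ii) together, $e(\mathfrak n,R)=\text{rank}_A(R)=\delta_1\cdots\delta_d\,e_R$, which is the assertion. The step I expect to be the real obstacle, as opposed to bookkeeping, is the additivity invoked in (i): since $R$ need not be Cohen--Macaulay it need not be $A$-free, so $e(\mathfrak n,R)$ cannot simply be read off a free presentation, and one must instead appeal to the standard behaviour of Hilbert--Samuel polynomials along short exact sequences (\cite[Chapter~4]{BH}), equivalently to the associativity formula $e(\mathfrak n,R)=\sum_{\mathfrak p}e(\mathfrak n,A/\mathfrak p)\,\lambda_{A_{\mathfrak p}}(R_{\mathfrak p})$ summed over the minimal primes $\mathfrak p$ of $A$ with $\dim A/\mathfrak p=d$. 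Everything else reduces to \Cref{Samuel multiplicity in terms of Hilbert-Poincare series}, the explicit Hilbert series of $A$, and elementary limits, together with the graded Noether normalization facts used in the first paragraph.
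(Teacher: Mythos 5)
Your proof is correct, and it takes a genuinely different route from the paper's. The paper simply cites Proposition~2.10 of Huneke--Takagi to assert that the Hilbert--Samuel multiplicity of $R$ with respect to $(f_1,\dots,f_d)$ equals $\delta_1\cdots\delta_d\lim_{t\to1}(1-t)^dH_R(t)$, and then invokes \Cref{Samuel multiplicity in terms of Hilbert-Poincare series} to identify that limit with $e_R$. You instead pass to the graded Noether normalization $A=k[f_1,\dots,f_d]$ and show that \emph{both} the Hilbert--Samuel multiplicity of $R$ with respect to $(f_1,\dots,f_d)$ and the quantity $\delta_1\cdots\delta_d\,e_R$ equal the generic rank $\operatorname{rank}_A(R)$. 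Your computation (i) correctly reduces $e(\mathfrak n,R)$ to $\operatorname{rank}_A(R)\cdot e(\mathfrak n,A)$ via a graded prime filtration of $R$ as an $A$-module together with the associativity formula (lower-dimensional quotients contribute zero, and the number of $(0)$-slices is the generic rank), and the explicit count $\lambda(A/\mathfrak n^n)=\binom{n-1+d}{d}$ gives $e(\mathfrak n,A)=1$. Your computation (ii) correctly uses the same filtration and $H_A(t)=\prod_i(1-t^{\delta_i})^{-1}$ together with \Cref{Samuel multiplicity in terms of Hilbert-Poincare series} to get $e_R=\operatorname{rank}_A(R)/(\delta_1\cdots\delta_d)$, with the shifts $t^{-a_j}\to1$ harmless as $t\to1$. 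What your approach buys is a self-contained, internal-to-the-paper argument that makes the underlying mechanism transparent (the generic rank over a Noether normalization is the common invariant controlling both multiplicities); what it costs is the appeal to the associativity formula for Hilbert--Samuel multiplicities of $A$-modules, which you correctly flag as the nontrivial ingredient and which is indeed standard (e.g.\ Bruns--Herzog, Chapter~4). Either proof is acceptable; yours is more elementary in the sense of avoiding the external reference, at the expense of being longer.
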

\begin{proof}
By Proposition 2.10 of \cite{HunekeTakagi}, the desired multiplicity is $\delta_1 \ldots \delta_d\underset{t \to 1}{\lim}(1-t)^dH_R(t)$, which by \Cref{Samuel multiplicity in terms of Hilbert-Poincare series} is $\delta_1 \ldots \delta_d e_R$.
\end{proof}
\subsection{Vector bundles on curves}\label{vector bundles on curve}
In this subsection, $C$ stands for a curve, where by a curve we mean a one dimensional, irreducible smooth projective variety over an algebraically closed field; the genus of $C$ is denoted by $g$. A \textit{vector bundle on $C$} means a locally free sheaf $\mathcal{O}_C$-modules of finite constant rank. Morphisms of vector bundles are a priori morphisms of $\mathcal O_C$-modules. We recall some results on vector bundles on $C$  which we use in \Cref{in dimension two}. For any unexplained terminology, readers are requested to turn to \cite{Hart} or \cite{Potier}.
\begin{definition}\label{subbundle} Let $\mathcal F$ be a coherent sheaf on the curve $C.$
\begin{enumerate}
    \item The rank of $\mathcal F,$ denoted by $\text{rk}(\mathcal{F})$, is the dimension of the stalk of $\mathcal F$ at the generic point of $C$ as a vector space over the function field of $C$.
    \item The degree of $\mathcal F,$ denoted by $\text{deg}(\mathcal{F})$, is 
    defined as  $h^0(C, \mathcal{F})- h^1(C, \mathcal{F})- \text{rk}(\mathcal{F})(1-g)$.
    \item The \textit{slope} of $\mathcal{F}$,  denoted by $\mu(\mathcal{F})$, is the ratio
    $\frac{\text{deg}(\mathcal{F})}{\text{rk}(\mathcal{F})}$.
     By convention, $\mu(\mathcal F)=\infty$ if 
    $\text{rk}(\mathcal{F})=0$.
   \end{enumerate}
\end{definition}
\begin{definition}
A vector bundle $\E$ on $C$ is called \textit{semistable} if for any nonzero coherent subsheaf $\mathcal{F}$ of $\E$, $\mu(\mathcal{F}) \leq \mu(\E)$.  
\end{definition}
\begin{theorem}\label{HN filtration}(see \cite[Prop 1.3.9]{HN})
Let $\E$ be a vector bundle on $C$. Then there exists a unique filtration:
$$0=\E_0 \subset \E_1 \subset \ldots \subset \E_t \subset \E_{t+1}= \E$$
such that,
\begin{enumerate}
    \item All the quotients $\E_{j+1}/\E_j$ are non-zero, semistable vector bundles.
    \item For all $j$, $\mu(\E_j/ \E_{j-1})> \mu(\E_{j+1}/\E_j)$.
\end{enumerate}
This filtration is called the \textit{Harder-Narasimhan filtration} of $\E$.
\end{theorem}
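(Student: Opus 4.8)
The plan is to prove existence and uniqueness simultaneously by induction on $\text{rk}(\E)$, with the \emph{maximal destabilizing subsheaf} as the engine. Put $\mu_{\max}(\E):=\sup\{\mu(\F)\mid 0\neq\F\subseteq\E\text{ a coherent subsheaf}\}$. The first task --- and, I expect, the only one that is not purely formal --- is to show this supremum is a finite real number that is attained. Replacing a subsheaf by its saturation preserves the rank and does not decrease the degree, so it suffices to bound slopes of subbundles; and a rank-$r$ subbundle $\F\subseteq\E$ determines a line subbundle $\det\F\hookrightarrow\bigwedge^{r}\E$, so finiteness follows from the fact that line subsheaves of a fixed vector bundle on the projective curve $C$ have bounded degree. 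Granting this, choose among all subsheaves of $\E$ of slope $\mu_{\max}(\E)$ one, call it $\E_1$, of maximal rank. I would then check the three basic properties of $\E_1$: it is saturated (its saturation has the same rank and degree $\geq\deg\E_1$, hence slope $\geq\mu_{\max}(\E)$, so equality holds everywhere and the saturation equals $\E_1$), hence a subbundle; it is semistable, since every nonzero subsheaf of it has slope $\leq\mu_{\max}(\E)=\mu(\E_1)$; and it is \emph{unique}, via the usual sum argument: for a second such $\E_1'$, the sequence $0\to\E_1\cap\E_1'\to\E_1\oplus\E_1'\to\E_1+\E_1'\to 0$, additivity of rank and degree, and $\mu(\E_1\cap\E_1')\leq\mu_{\max}(\E)$ force $\mu(\E_1+\E_1')\geq\mu_{\max}(\E)$, hence $=\mu_{\max}(\E)$, hence by rank-maximality $\E_1+\E_1'=\E_1=\E_1'$.

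For existence of the filtration: if $\E_1=\E$, then $\E$ is semistable and we are done with $t=0$. Otherwise $\overline{\E}:=\E/\E_1$ is a vector bundle of strictly smaller rank (using that $\E_1$ is a subbundle), so by induction it has a Harder--Narasimhan filtration $0=\overline{\E}^{(0)}\subset\overline{\E}^{(1)}\subset\cdots\subset\overline{\E}^{(s)}=\overline{\E}$. I would declare $\E_1$ the first step and set $\E_{j+1}:=\pi^{-1}(\overline{\E}^{(j)})$, where $\pi\colon\E\to\overline{\E}$ is the projection; then $\E_{j+1}/\E_{j}\cong\overline{\E}^{(j)}/\overline{\E}^{(j-1)}$ for $j\geq 1$ and $\E_2/\E_1\cong\overline{\E}^{(1)}$, so all successive quotients are nonzero semistable bundles and all the slope inequalities among consecutive quotients for indices $\geq 2$ are exactly those of the filtration of $\overline{\E}$. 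The one new inequality to verify is $\mu(\E_1)>\mu(\E_2/\E_1)=\mu_{\max}(\overline{\E})$: if $\F\subseteq\overline{\E}$ had $\mu(\F)\geq\mu(\E_1)$, then $\G:=\pi^{-1}(\F)$ would fit in $0\to\E_1\to\G\to\F\to0$, forcing $\mu(\G)\geq\mu(\E_1)=\mu_{\max}(\E)$ and $\text{rk}(\G)>\text{rk}(\E_1)$, contradicting the maximality of $\text{rk}(\E_1)$.

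For uniqueness, let $0=\E_0\subset\E_1\subset\cdots\subset\E_{t+1}=\E$ be any filtration satisfying (1) and (2); it suffices to show that $\E_1$ must be the maximal destabilizing subsheaf, since then the induced filtration on $\E/\E_1$ again satisfies (1) and (2) and, by induction on rank, is the unique Harder--Narasimhan filtration of $\E/\E_1$, which determines $\E_2,\ldots,\E_{t+1}$. First, (1) and (2) force every $\E_j$ to be saturated, reading from the top: $\E/\E_j$ is an iterated extension of the vector bundles $\E_{i+1}/\E_i$ with $i\geq j$, hence a vector bundle. Next, for any nonzero $\F\subseteq\E$, let $j$ be minimal with $\F\subseteq\E_j$; filtering $\F$ by $\F\cap\E_{j-1}\subseteq\F$ and inducting on $j$, using semistability of the quotients $\E_{i+1}/\E_i$ and the strict decrease of their slopes, yields $\mu(\F)\leq\mu(\E_1)$, with equality of slopes only if $j=1$. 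Hence $\mu(\E_1)=\mu_{\max}(\E)$ and the maximal destabilizing subsheaf is contained in $\E_1$; since $\E_1$ is itself saturated and semistable of slope $\mu_{\max}(\E)$, rank-maximality of the maximal destabilizing subsheaf forces the two to coincide.

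The step I expect to be the genuine obstacle is the finiteness of $\mu_{\max}(\E)$ and the attainment of the supremum: this is the only place where the geometry of the projective curve actually enters (through boundedness of degrees of line subsheaves of $\bigwedge^{r}\E$, or equivalently properness of the relevant Quot schemes), whereas once a maximal destabilizing subsheaf is available everything else is formal bookkeeping with ranks, degrees and slopes.
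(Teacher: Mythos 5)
The paper does not actually prove this statement: it is quoted from the literature, with the attribution ``see [HN, Prop.~1.3.9],'' so there is no in-paper argument to compare yours against. Evaluated on its own terms, your proof is the standard one (maximal destabilizing subsheaf, then induction on rank), and the logic is sound. The boundedness step is correctly identified as the only place where the geometry of the curve enters; the reduction via $\det\F\hookrightarrow\bigwedge^r\E$ to bounding line subsheaves is fine, though you suppress the argument that line subsheaves of a fixed bundle on a projective curve have bounded degree (e.g.\ embed the bundle into a direct sum $\mathcal{O}_C(N)^{\oplus m}$ by dualizing a surjection from such, and use that a nonzero map of line bundles is injective with torsion cokernel), as well as the observation that the supremum is attained because the relevant slopes are rationals with denominators bounded by $\text{rk}\,\E$. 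In the uniqueness-of-$\E_1$ step, the case $\E_1\cap\E_1'=0$ is not excluded by the inequality you cite but rather leads directly to a rank contradiction; it is worth making that fork explicit. Finally, one should say a word about why $\E_1+\E_1'=\E_1$ actually follows from equality of ranks: it uses that $\E_1$ is saturated, so that $(\E_1+\E_1')/\E_1$ is a torsion subsheaf of the torsion-free $\E/\E_1$ and hence vanishes. None of these is a gap in the sense of a missing idea; they are routine details whose omission is normal at this level of writeup.
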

\medskip
\begin{proposition}\label{minimum slope negative implies no global sections}
Let $0=\E_0 \subset \E_1 \subset \ldots \subset \E_t \subset \E_{t+1}= \E$ be the HN filtration on $\E$. If the slope of $\E_1$ is negative, $\E$ cannot have a non-zero global section.
\end{proposition}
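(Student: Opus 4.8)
The plan is to exploit the two defining properties of the Harder--Narasimhan filtration: the quotients are semistable, and their slopes strictly decrease as one goes up the filtration. Since $\mu(\E_1)<0$ by hypothesis, and $\mu(\E_1) = \mu(\E_1/\E_0) > \mu(\E_2/\E_1) > \cdots > \mu(\E_{t+1}/\E_t)$, every graded quotient $\E_{j+1}/\E_j$ has negative slope. So the first step is to reduce the claim to the statement that a \emph{semistable} bundle of negative slope on $C$ has no nonzero global section: if each successive quotient $\E_{j+1}/\E_j$ has $h^0 = 0$, then climbing the filtration via the short exact sequences $0 \to \E_j \to \E_{j+1} \to \E_{j+1}/\E_j \to 0$ and the left-exactness of global sections gives $h^0(C,\E_j) = 0$ for all $j$ by induction, hence $h^0(C,\E) = h^0(C,\E_{t+1}) = 0$.

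Next I would prove the semistable case. Suppose $\E'$ is semistable with $\mu(\E') < 0$ and suppose, for contradiction, that it has a nonzero global section $s \colon \mathcal{O}_C \to \E'$. Since $C$ is a smooth curve and $\mathcal{O}_C$ is a line bundle, the image of $s$ is a rank-one coherent subsheaf of $\E'$; more precisely $s$ factors as $\mathcal{O}_C \twoheadrightarrow \mathcal{O}_C$ (an iso, as $s\neq 0$ and $\mathcal{O}_C$ is a domain at the generic point) into a rank-one subsheaf $\mathcal{L} \subseteq \E'$. Any nonzero subsheaf of $\mathcal{O}_C$ on a smooth curve is isomorphic to an ideal sheaf $\mathcal{O}_C(-D)$ for an effective divisor $D$, so $\mathcal{L} \cong \mathcal{O}_C(-D)$ with $D \geq 0$, whence $\deg(\mathcal{L}) = -\deg(D) \leq 0$, and in fact $\mu(\mathcal{L}) = \deg(\mathcal{L}) \leq 0$. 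But semistability of $\E'$ forces $\mu(\mathcal{L}) \leq \mu(\E') < 0$, which is consistent — so I need to sharpen this: I will instead use that the section $s$ is nowhere-zero on a dense open and thus the subsheaf it generates actually contains a copy of $\mathcal{O}_C$ only if we take the saturation. Cleaner: let $\mathcal{L}$ be the saturation of the image of $s$ in $\E'$, a rank-one subbundle (line bundle) containing $\mathcal{O}_C(-D)$, so $\deg(\mathcal{L}) \geq \deg(\mathcal{O}_C(-D)) \geq 0$ — wait, the containment $\mathcal{O}_C \hookrightarrow \mathcal{L}$ (the saturation of the image of a genuine map from $\mathcal{O}_C$) gives $\deg(\mathcal{L}) \geq \deg(\mathcal{O}_C) = 0$. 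Then semistability gives $0 \leq \deg(\mathcal{L}) = \mu(\mathcal{L}) \leq \mu(\E') < 0$, a contradiction.

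Thus the main obstacle is the bookkeeping in the semistable case — specifically, pinning down that a nonzero global section of a bundle on a smooth curve produces a subsheaf of slope $\geq 0$ (via saturating the image of $\mathcal{O}_C \to \E'$ to a line subbundle of nonnegative degree). Everything else — the reduction along the HN filtration and the strict slope inequalities from \Cref{HN filtration} — is a routine induction using left-exactness of $H^0(C,-)$. I would present the argument in the order: (i) record that all HN quotients have negative slope; (ii) prove the semistable lemma; (iii) run the induction up the filtration.
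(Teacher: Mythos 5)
Your proposal is correct and uses essentially the same key idea as the paper: a nonzero global section of a semistable bundle of negative slope yields a rank-one subsheaf of nonnegative degree, contradicting semistability. The only organizational difference is that you first prove $h^0=0$ for each Harder--Narasimhan graded piece and then induct up the filtration using left-exactness of $H^0$, whereas the paper locates the single graded piece $\E_{b+1}/\E_b$ into which the section projects nontrivially and derives the contradiction there; these are two ways of packaging the identical argument. (One small simplification available to you: since $s\colon\mathcal{O}_C\to\E'$ is automatically injective on a smooth curve, its image is already $\cong\mathcal{O}_C$ of slope $0$, so no saturation is needed to reach the contradiction.)
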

\begin{proof}
On the contrary, assume that $\E$ has a non-zero global section $s$. Let $\lambda_s: \mathcal{O}_C \rightarrow \E$ be the non-zero map induced by $s$. Let $b$ be the largest integer such that the composition $ \mathcal{O}_C \xrightarrow{\lambda_s} \E \rightarrow \E/\E_b$ is non-zero. Then $\lambda_s$ induces a non-zero map from $\mathcal{O}_C$ to $\E _{b+1}/\E_b$, whose image $\mathcal{L}$ is a line bundle with a non-zero global section. So the slope of $\mathcal{L}$ is positive. On the other hand, since $\mathcal{L}$ is a non-zero subsheaf of the semistable sheaf $\E _{b+1}/\E_b$, $\mu({\mathcal{L}})< \mu({\E _{b+1}/\E_b})$. Since $\mu (\E _{b+1}/\E_b) < \mu(\E _1)$, the slope of $\E _{b+1}/\E_b$ is negative ; so $\mathcal{L}$ cannot have a positive slope.
\end{proof}
\begin{lemma}\label{properties of HN filtration}
\begin{enumerate}
    \item For a coherent sheaf of $\mathcal{O}_C$ modules $\mathcal{F}$ and a line bundle $\mathcal{L}$, $\mu(\mathcal{F}\otimes \mathcal{L}) = \mu(\mathcal{F})+ \text{deg}(\mathcal{L})$. Here we stick to the convention that the sum of $\infty$ and a real number is $\infty$.
    \item Tensor product of a semistable vector bundle and a line bundle is semistable.
    \item Given a vector bundle $\E$ and a line bundle $\mathcal{L}$ on $C$, the HN filtration on $\E \otimes \mathcal{L}$ is obtained by tensoring the HN filtration on  $\E$ with $\mathcal{L}$.
\end{enumerate}
\end{lemma}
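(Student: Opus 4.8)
The plan is to establish part (1) first, since parts (2) and (3) will follow formally from it together with the uniqueness assertion in \Cref{HN filtration}. For part (1), I would begin by noting that tensoring with a line bundle leaves the rank at the generic point unchanged, so $\text{rk}(\mathcal{F}\otimes\mathcal{L})=\text{rk}(\mathcal{F})$; if this common rank is zero, both sides of the asserted identity are $\infty$ by the stated convention and there is nothing to prove. Assuming the rank is $r>0$, it suffices to prove $\text{deg}(\mathcal{F}\otimes\mathcal{L})=\text{deg}(\mathcal{F})+r\,\text{deg}(\mathcal{L})$. Since $\text{deg}$ differs from the Euler characteristic $h^0-h^1$ only by a term linear in the rank, and both the Euler characteristic and the rank are additive on short exact sequences, $\text{deg}$ is additive on short exact sequences of coherent sheaves. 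Applying this to $0\to T\to\mathcal{F}\to\mathcal{F}/T\to 0$, where $T$ is the torsion subsheaf (so $\mathcal{F}/T$ is locally free, $C$ being a smooth curve), and to its tensor by the locally free sheaf $\mathcal{L}$, and using that $T\otimes\mathcal{L}\cong T$ has the same rank $0$ and degree (length) as $T$, reduces the claim to the case where $\mathcal{F}=\mathcal{E}$ is a vector bundle. For a vector bundle one has $\text{deg}(\mathcal{E})=\text{deg}(\det\mathcal{E})$ and $\det(\mathcal{E}\otimes\mathcal{L})\cong(\det\mathcal{E})\otimes\mathcal{L}^{\otimes r}$, and combining these with additivity of the degree of line bundles under tensor product gives the formula. (Alternatively, one can avoid determinants and induct on $|\text{deg}\,\mathcal{L}|$ using the sequence $0\to\mathcal{E}\otimes\mathcal{L}(-P)\to\mathcal{E}\otimes\mathcal{L}\to(\mathcal{E}\otimes\mathcal{L})|_P\to 0$ for a closed point $P$.)

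For part (2), let $\mathcal{E}$ be semistable and let $\mathcal{G}\subseteq\mathcal{E}\otimes\mathcal{L}$ be a nonzero coherent subsheaf. Tensoring by the line bundle $\mathcal{L}^{-1}$ is exact and faithful, so $\mathcal{G}\otimes\mathcal{L}^{-1}$ is a nonzero coherent subsheaf of $\mathcal{E}\otimes\mathcal{L}\otimes\mathcal{L}^{-1}=\mathcal{E}$, whence $\mu(\mathcal{G}\otimes\mathcal{L}^{-1})\le\mu(\mathcal{E})$ by semistability of $\mathcal{E}$. Applying part (1) to both $\mathcal{G}$ and $\mathcal{E}$ rewrites this as $\mu(\mathcal{G})\le\mu(\mathcal{E})+\text{deg}(\mathcal{L})=\mu(\mathcal{E}\otimes\mathcal{L})$, so $\mathcal{E}\otimes\mathcal{L}$ is semistable.

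For part (3), I would take the Harder-Narasimhan filtration $0=\mathcal{E}_0\subset\mathcal{E}_1\subset\cdots\subset\mathcal{E}_{t+1}=\mathcal{E}$ provided by \Cref{HN filtration} and tensor it by $\mathcal{L}$; since $-\otimes\mathcal{L}$ preserves injections this yields a filtration of $\mathcal{E}\otimes\mathcal{L}$ by subbundles, and exactness of $-\otimes\mathcal{L}$ identifies the successive quotients with $(\mathcal{E}_{j+1}/\mathcal{E}_j)\otimes\mathcal{L}$. These quotients are nonzero (as $\mathcal{L}$ is invertible and $\mathcal{E}_{j+1}/\mathcal{E}_j\neq 0$) and semistable by part (2), and by part (1) their slopes satisfy $\mu((\mathcal{E}_j/\mathcal{E}_{j-1})\otimes\mathcal{L})=\mu(\mathcal{E}_j/\mathcal{E}_{j-1})+\text{deg}(\mathcal{L})>\mu(\mathcal{E}_{j+1}/\mathcal{E}_j)+\text{deg}(\mathcal{L})=\mu((\mathcal{E}_{j+1}/\mathcal{E}_j)\otimes\mathcal{L})$, the strict inequality being exactly condition (2) of \Cref{HN filtration} for $\mathcal{E}$. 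Thus the tensored filtration satisfies both defining properties of a Harder-Narasimhan filtration, and by the uniqueness clause of \Cref{HN filtration} it coincides with the HN filtration of $\mathcal{E}\otimes\mathcal{L}$.

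The only step with any genuine content is part (1): it requires the reduction to the vector bundle case (handling the torsion subsheaf) and the basic additivity of degree, via the determinant identity or via the residue-field sequence at a point. Parts (2) and (3) are then purely formal consequences of part (1) and of the definition and uniqueness of the Harder-Narasimhan filtration, so I do not anticipate any difficulty there.
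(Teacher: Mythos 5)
Your proof is correct and takes essentially the same route as the paper: reduce (2) and (3) formally to (1), prove (1) by splitting off the torsion subsheaf via additivity of degree and then invoking the determinant identity for the locally free case. The paper's write-up is terser (it simply asserts that (2) and (3) follow and that the rank-zero case is clear), but the content is identical to what you spelled out.
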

\begin{proof}
Because (3) follows from (1) and (2); and assertion (2) follows from (1), it suffices to prove (1). For (1), it is enough to show that 
\begin{equation}\label{ses}
\text{deg}(\mathcal{F}\otimes \mathcal{L})= \text{deg}(\mathcal{F})+ \text{rk}(\mathcal{F})\text{deg}(\mathcal{L}) \ .
\end{equation}
 This is clear when $\text{rk}(\mathcal{F})=0$. In the general case, take the short exact sequence of sheaves $0 \rightarrow \mathcal{F}' \rightarrow \mathcal{F} \rightarrow \mathcal{F}'' \rightarrow 0$, where $\mathcal{F}'$ is the torsion subsheaf of $\mathcal F$ and
$\mathcal{F}'':= \mathcal{F}/\mathcal{F}'$ is a vector bundle; note that the rank of $\mathcal{F}'$ is zero.
 Since degree is additive over short exact sequences (see section 2.6, \cite{Potier}),
 it suffices to show (\ref{ses}) when $\mathcal F=\mathcal F''$, that is, when $\mathcal F$ is locally free. In this case,
 $\text{deg}(\mathcal{F}\otimes \mathcal{L})= \text{deg}(\text{det}(\mathcal{F}\otimes \mathcal{L}))= \text{deg}(\text{det}(\mathcal{F})\otimes \mathcal{L}^{\otimes{\text{rk}}(\mathcal{F})})$ (for e.g. by Theorem 2.6.9 of \cite{Potier}), so  $\text{deg}(\mathcal{F}\otimes \mathcal{L})= \text{deg}(\mathcal{F})+ \text{rk}(\mathcal{F})\text{deg}(\mathcal{L})$ by  Theorem 2.6.3, \cite{Potier}.
\end{proof}
\vspace{.2cm}
In the next lemma, for a sheaf of $\mathcal{O}_C$-modules $\mathcal{F}$, $\mathcal{F}^ \vee$ denotes the \textit{dual} sheaf $\underline{\text{Hom}}_{\mathcal{O}_C}(\mathcal{F}, \mathcal{O}_C)$.
\begin{lemma}\label{filtration on the dual}
\begin{enumerate}
    \item The dual of a semistable vector bundle is semistable.
    \item Let $0=\E_0 \subset \E_1 \ldots \subset \E_t \subset \E_{t+1}= \E$ be the HN filtration on a vector bundle $\E$. For $j$ between $0$ and $t+1$, set $K_j= \text{ker}(\E^ \vee \rightarrow \E_{t+1-j}^\vee)$. Then 
$$0=K_0 \subset K_1 \subset \ldots \subset K_t \subset K_{t+1}= \E^ \vee$$
is the HN filtration on $\E^ \vee$.
\end{enumerate}
\end{lemma}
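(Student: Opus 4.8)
The plan is to deduce both statements from the numerical behaviour of rank, degree, and slope under dualization, together with the uniqueness clause of the Harder--Narasimhan filtration (\Cref{HN filtration}). The single fact I would establish first is that for any vector bundle $\mathcal G$ on $C$ one has $\text{rk}(\mathcal G^*)=\text{rk}(\mathcal G)$ and $\text{deg}(\mathcal G^*)=-\text{deg}(\mathcal G)$, hence $\mu(\mathcal G^*)=-\mu(\mathcal G)$. The rank statement is immediate from the definition of rank as the dimension of the generic stalk (dualizing commutes with localizing at the generic point). For the degree, since $\mathcal G$ is locally free, $\text{deg}(\mathcal G)=\text{deg}(\det \mathcal G)$ and $\det(\mathcal G^*)=(\det\mathcal G)^*=(\det\mathcal G)^{-1}$, and for a line bundle $\text{deg}(\mathcal L^{-1})=-\text{deg}(\mathcal L)$ (e.g. by Theorem~2.6.3 of \cite{Potier}, exactly as used in the proof of \Cref{properties of HN filtration}); alternatively one can use Serre duality to identify $h^i(C,\mathcal G^*)$ with $h^{1-i}(C,\mathcal G\otimes\omega_C)$ and compute. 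Either route is routine.

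For part (1), let $\E$ be semistable of slope $\mu$ and suppose $\mathcal F\subset\E^*$ is a nonzero coherent subsheaf; I must show $\mu(\mathcal F)\le\mu(\E^*)=-\mu$. After replacing $\mathcal F$ by its saturation (the kernel of $\E^*\to(\E^*/\mathcal F)/\text{torsion}$), which only increases the degree and hence the slope, I may assume the quotient $\mathcal Q=\E^*/\mathcal F$ is a vector bundle. Dualizing the short exact sequence $0\to\mathcal F\to\E^*\to\mathcal Q\to 0$ gives an injection $\mathcal Q^*\hookrightarrow\E^{**}=\E$ (using that $\E$ is reflexive, being locally free on a smooth curve), so $\mathcal Q^*$ is a subsheaf of $\E$ and semistability of $\E$ yields $\mu(\mathcal Q^*)\le\mu$, i.e. $-\mu(\mathcal Q)\le\mu$. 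Now the additivity of rank and degree on the sequence $0\to\mathcal F\to\E^*\to\mathcal Q\to0$, combined with $\text{rk}(\E^*)=\text{rk}(\E)$ and $\text{deg}(\E^*)=-\text{deg}(\E)$, lets me compare $\mu(\mathcal F)$ with $\mu(\E^*)$: writing out $\mu(\mathcal F)=\frac{\deg\E^*-\deg\mathcal Q}{\text{rk}\E^*-\text{rk}\mathcal Q}$ and using $\mu(\mathcal Q)\ge-\mu=\mu(\E^*)$ gives $\mu(\mathcal F)\le\mu(\E^*)$ by the standard mediant inequality. This proves (1).

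For part (2), consider the filtration $K_j=\ker(\E^*\to\E_{t+1-j}^*)$. Each surjection $\E^*\twoheadrightarrow\E_{t+1-j}^*$ comes from the inclusion $\E_{t+1-j}\hookrightarrow\E$ by dualizing (here I use that $\E/\E_{t+1-j}$ is locally free, so the dual sequence stays exact on the left), and one checks $K_j\subset K_{j+1}$ with successive quotient $K_{j+1}/K_j\cong(\E_{t+1-j}/\E_{t-j})^*$. By part~(1) each such quotient is semistable, with slope $-\mu(\E_{t+1-j}/\E_{t-j})$; since the original slopes are strictly decreasing along the HN filtration of $\E$, these dual slopes are strictly decreasing along $K_\bullet$ as well, and all quotients are nonzero. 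Thus $K_\bullet$ satisfies the two defining properties of \Cref{HN filtration}, and by the uniqueness of the HN filtration it must be the HN filtration of $\E^*$. The one point demanding care — and the only real obstacle — is the exactness of the dualized sequences and the identification $K_{j+1}/K_j\cong(\E_{t+1-j}/\E_{t-j})^*$: this rests on each quotient $\E/\E_j$ being locally free (so that $\underline{\text{Ext}}^1$ terms vanish and $(-)^*$ is exact on the relevant short exact sequences), which holds because subbundles in the HN filtration are saturated. Everything else is bookkeeping with the additivity of $\text{rk}$ and $\text{deg}$.
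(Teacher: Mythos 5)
Your argument is correct and is fundamentally the same proof as the paper's, with one small variation in part (1). Both proofs saturate the subsheaf, dualize the resulting short exact sequence (using that a locally free quotient makes $(-)^*$ exact), and then invoke semistability of the original bundle. The difference is where you apply semistability: you read off the injection $\mathcal Q^*\hookrightarrow\mathcal E$ and use the subsheaf inequality $\mu(\mathcal Q^*)\le\mu(\mathcal E)$, which then requires the extra mediant-inequality step to transfer information back to $\mu(\mathcal F)$. The paper instead looks at $(\mathcal F_{\mathrm{sat}})^*$ as a quotient of $\mathcal E$, uses $\mu\big((\mathcal F_{\mathrm{sat}})^*\big)\ge\mu(\mathcal E)$, and finishes in one line by negating. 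That choice is marginally more direct but the two are interchangeable. One tiny point worth adding to your write-up: after saturation the quotient $\mathcal Q$ could be the zero sheaf (when $\mathcal F$ has full rank), in which case $\mathcal F_{\mathrm{sat}}=\mathcal E^*$ and the conclusion is immediate, but your mediant step literally divides by $\operatorname{rk}(\mathcal Q)$; the paper disposes of this rank-zero case separately at the outset. Your part (2) is the same as the paper's: identify $K_{j+1}/K_j\cong(\mathcal E_{t+1-j}/\mathcal E_{t-j})^*$, note that dualization negates slopes and preserves semistability by part (1), and appeal to uniqueness of the Harder--Narasimhan filtration.
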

\begin{proof}
(1) Let $\mathcal{F}$ be semistable vector bundle and $\mathcal{G}$ be a non-zero subsheaf of $\mathcal{F}^ \vee$. We show that $\mu(\mathcal{G})\leq \mu(\mathcal{F}^ \vee)$- this is clear when $\text{rk}(\mathcal{F}^ \vee/\mathcal{G})=0$ as $\text{deg}(\mathcal{F}^ \vee)= \text{deg}(\mathcal{G})+ \text{deg}(\mathcal{F}^ \vee/\mathcal{G})$ and $\text{rk}(\mathcal{F}^ \vee)= \text{rk}(\mathcal{G})$. If $\text{rk}(\mathcal{F}^ \vee/\mathcal{G})$ is not zero, set $\mathcal{G}'$ to be the inverse image of $(\mathcal{F}^ \vee/\mathcal{G})_{\text{tor}}$: the torsion subsheaf of $\mathcal{F}^ \vee/\mathcal{G}$, under the quotient map $\mathcal{F}^ \vee \rightarrow \mathcal{F}^ \vee/\mathcal{G}$. Then $ \mathcal{G}'/\mathcal{G} \cong (\mathcal{F}^ \vee/\mathcal{G})_{\text{tor}}$. So $\mathcal{G}$ and $\mathcal{G}'$ have the same rank. Since $\text{deg}(\mathcal{G}') \geq \text{deg}(\mathcal{G})$, it is enough to show that $\mu(\mathcal{G}') \leq \mu(\mathcal{F})$. Since $\mathcal{F}/\mathcal{G}'$ is a vector bundle, after dualizing we get an exact sequence:
$$0 \rightarrow (\mathcal{F}/\mathcal{G}')^ \vee \rightarrow \mathcal{F} \rightarrow (\mathcal{G}')^ \vee \rightarrow 0.$$
Since $\mathcal{F}$ is semistable, $\mu((\mathcal{G}')^ \vee) \geq \mu(\mathcal{F})$- see section 5.3, \cite{Potier}. Since for a vector bundle $\mathcal{E}$, $\text{deg}(\mathcal{E}^ \vee)= -\text{deg}(\mathcal{E})$, we have $\mu(\mathcal{G}') \leq \mu(\mathcal{F}^ \vee)$.\\\\
(2) $K_{j+1}/ K_j \cong (\E_{t+1-j}/\E_{t-j})^ \vee$, so by (1) $(\E_{t+1-j}/\E_{t-j})$ is semistable. Moreover, since $\mu(K_{j+1}/ K_j)= - \mu(\E_{t+1-j}/\E_{t-j})$, slopes of $K_{j+1}/ K_j$ form a decreasing sequence.
\end{proof}
\vspace{.2cm}
Let $C$ be a curve over an algebraically closed field of positive characteristic. Let $f$ be the \textit{absolute Frobenius} endomorphism of $C$. Since $C$ is smooth, $f$ is flat map (see Theorem 2.1, \cite{Kunz}). So the pullback of the HN filtration on a given vector bundle gives a filtration of the pull back bundle by subbundles- in general this is not the HN filtration on the pullback bundle. 
\begin{theorem}\label{Langer}(see Theorem 2.7 \cite{Langer})
Let $\E$ be a vector bundle on a curve $C$. Then there is an $n_0 \in \N$ such that for $n \geq n_0$, the HN filtration on $(f^n)^*(\E)$ is the pullback of the HN filtration on $(f^{n_0})^*(\E)$ via $f^{n-n_0}$.
\end{theorem}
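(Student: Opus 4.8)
The plan is to argue by induction on $\text{rk}(\mathcal E)$: first show that the maximal destabilizing subbundle of $(f^n)^*(\mathcal E)$ eventually becomes compatible with Frobenius pullback, then peel it off and apply the inductive hypothesis to the quotient (base case: a line bundle, whose Harder--Narasimhan filtration is trivial). Write $\mu_{\max}$, $\mu_{\min}$ for the largest and smallest slopes occurring in a Harder--Narasimhan filtration. Since $C$ is smooth, $f$ is flat, so $f^*$ is exact, sends subbundles to subbundles, and carries the Harder--Narasimhan filtration of any $\mathcal V$ to a filtration of $f^*\mathcal V$ by subbundles; since $\text{deg}(f^*\mathcal V)=p\,\text{deg}(\mathcal V)$ while $\text{rk}(f^*\mathcal V)=\text{rk}(\mathcal V)$, pulling back multiplies every slope by $p$. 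In particular, if $\mathcal F_n\subseteq (f^n)^*(\mathcal E)$ denotes the maximal destabilizing subbundle, then $f^*\mathcal F_n$ embeds in $(f^{n+1})^*(\mathcal E)$, so with $b_n:=\mu_{\max}((f^n)^*\mathcal E)$ the sequence $b_n/p^n$ is non-decreasing. The recurring subtlety is that $f^*$ of a semistable bundle need not be semistable, which is exactly why the pulled-back filtration need not be the Harder--Narasimhan filtration of the pullback.

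The two inputs I would use are boundedness of $b_n/p^n$ and Cartier descent. Recall that $f^*\mathcal V$ carries its canonical connection $\nabla$, integrable with vanishing $p$-curvature, that a $\nabla$-horizontal subbundle of $f^*\mathcal V$ is exactly the pullback $f^*\mathcal W$ of a subbundle $\mathcal W\subseteq\mathcal V$ (Cartier descent), and that a subbundle $\mathcal W'\subseteq f^*\mathcal V$ which is \emph{not} $\nabla$-horizontal admits a nonzero $\mathcal O_C$-linear second fundamental form $\mathcal W'\to (f^*\mathcal V/\mathcal W')\otimes\Omega^1_C$; by \Cref{properties of HN filtration} this yields $\mu_{\min}(\mathcal W')\le \mu_{\max}(f^*\mathcal V/\mathcal W')+\text{deg}(\Omega^1_C)=\mu_{\max}(f^*\mathcal V/\mathcal W')+(2g-2)$. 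Comparing consecutive Harder--Narasimhan slopes of $f^*\mathcal V$ pairwise via this inequality — first for $\mathcal V$ semistable, then in general by restricting to the Harder--Narasimhan pieces of $\mathcal V$ — gives a uniform estimate $\mu_{\max}(f^*\mathcal V)\le p\,\mu_{\max}(\mathcal V)+(\text{rk}(\mathcal E)-1)\max(0,2g-2)$. Applying this to $\mathcal V=(f^n)^*(\mathcal E)$ and summing a geometric series bounds $b_n/p^n$, so it converges; the same reasoning applied to $\wedge^k(f^n)^*(\mathcal E)$ shows $p^{-n}\max\{\text{deg}(\mathcal S):\mathcal S\subseteq (f^n)^*\mathcal E,\ \text{rk}(\mathcal S)=k\}$ is non-decreasing and bounded for each $k$, so the normalized Harder--Narasimhan polygon of $(f^n)^*(\mathcal E)$ converges to a convex polygon with fixed endpoints.

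Next the top step. If $\mathcal F_{n+1}$ is $\nabla$-horizontal for the canonical connection on $f^*((f^n)^*\mathcal E)$, Cartier descent writes it as $f^*\mathcal W$ for a subbundle $\mathcal W\subseteq (f^n)^*\mathcal E$; since $f^*\mathcal W$ is semistable so is $\mathcal W$, and $\mu(\mathcal W)=b_{n+1}/p\ge b_n$ forces $\mu(\mathcal W)=\mu_{\max}((f^n)^*\mathcal E)$, hence $\mathcal W=\mathcal F_n$ by uniqueness in \Cref{HN filtration}, i.e. $\mathcal F_{n+1}=f^*\mathcal F_n$. So a genuine jump $\mathcal F_{n+1}\ne f^*\mathcal F_n$ forces $\mathcal F_{n+1}$ to be non-$\nabla$-horizontal, whence the gap between the top two Harder--Narasimhan slopes of $(f^{n+1})^*(\mathcal E)$ is at most $2g-2$. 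When $g\le 1$ this is impossible (Harder--Narasimhan slopes strictly decrease and $2g-2\le 0$), so the top step is Frobenius-compatible for all $n$. When $g\ge 2$, one shows jumps occur only finitely often: a jump at step $n+1$ bounds the relevant bend of the unnormalized polygon by the constant $2g-2$, while the normalized polygon converges, so a jump recurring at a fixed first-step rank would make that vertex collapse in the limit; combining this with the integrality of degrees and a boundedness argument for the families of subbundles in play, the jumps must terminate at some $n_0$. Once $\mathcal F_n=(f^{n-n_0})^*(\mathcal F_{n_0})$ for $n\ge n_0$, exactness of $f^*$ gives $(f^n)^*(\mathcal E)/\mathcal F_n\cong (f^{n-n_0})^*\big((f^{n_0})^*(\mathcal E)/\mathcal F_{n_0}\big)$, a Frobenius-pullback tower on a bundle of strictly smaller rank, and the inductive hypothesis finishes the proof after enlarging $n_0$ finitely many more times.

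The main obstacle is precisely this last point when $g\ge 2$: passing from convergence of the numerical data (the normalized $\mu_{\max}$ and the normalized Harder--Narasimhan polygon) to honest stabilization of the \emph{filtration} requires playing the $O(1)$ bound coming from the second fundamental form into $\Omega^1_C$ against the rationality and bounded denominators of the limiting slopes, and this is the technical heart of the cited theorem of Langer. In higher dimension Langer replaces the elementary curve estimates above by his strong restriction theorems and Bogomolov-type inequalities; on a curve the argument is the one sketched here.
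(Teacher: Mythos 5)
The paper does not actually prove \Cref{Langer}; it is imported wholesale as Theorem~2.7 of \cite{Langer}, so there is no internal proof to compare against. Judged on its own, your sketch follows the standard elementary route on curves (going back to Shepherd-Barron and streamlined in Langer): normalize $\mu_{\max}$ by $p^n$, note it is non-decreasing since $f^*$ sends subbundles to subbundles and multiplies slopes by $p$, bound it using the second fundamental form into $\Omega^1_C$, invoke Cartier descent to characterize subbundles that come from downstairs, and induct on rank after the top step stabilizes. The base case, the $g\le 1$ case, the argument that a $\nabla$-horizontal maximal destabilizing subbundle must equal $f^*\mathcal F_n$, and the reduction to the quotient bundle are all correct.

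There is, however, a genuine gap exactly where you flag one: the termination of jumps when $g\ge 2$. The inequality you extract from a jump says only that the \emph{unnormalized} gap between the top two Harder--Narasimhan slopes of $(f^{n+1})^*\mathcal E$ is at most $2g-2$; after dividing by $p^{n+1}$ this tends to zero, so the corresponding vertex of the limiting normalized polygon is degenerate, but that by itself is compatible with jumps recurring forever. The two ways a jump can happen are (a) $b_{n+1}>pb_n$, i.e.\ a strict increase in the normalized $\mu_{\max}$, or (b) $b_{n+1}=pb_n$ with $\mathrm{rk}(\mathcal F_{n+1})>\mathrm{rk}(\mathcal F_n)$; case (b) is clearly finite, but in case (a) the increments $b_{n+1}/p^{n+1}-b_n/p^n$ are bounded below only by $1/(p^{n+1}\,\mathrm{rk}(\mathcal E)!)$, and these are summable, so boundedness of $b_n/p^n$ does not by itself rule out infinitely many strict increases. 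Closing this requires the additional boundedness input (a Quot-scheme or Noetherian-descending-chain argument showing the destabilizing subbundles, suitably normalized, vary in a bounded family and hence eventually repeat up to Frobenius pullback), which is precisely the content that makes this a theorem of Langer rather than a remark. A smaller point: you cite \Cref{properties of HN filtration} for the inequality $\mu_{\min}(\mathcal W')\le\mu_{\max}(\text{target})+(2g-2)$, but that lemma only records $\mu(\mathcal F\otimes\mathcal L)=\mu(\mathcal F)+\deg(\mathcal L)$; the step that a nonzero $\mathcal O_C$-linear map between bundles forces $\mu_{\min}(\text{source})\le\mu_{\max}(\text{target})$ is standard but needs to be stated (it is the same semistability argument used in \Cref{minimum slope negative implies no global sections}).
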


\section{Existence of Frobenius-Poincar\'e functions}   
In this section, we define the Frobenius-Poincar\'e function associated to a given triple $(M,R,I)$, where $R$ is a finitely generated $\N$-graded $k$-algebra -- $k$ has characteristic $p>0$,  (see \Cref{Notation}), $I$ is a homogeneous ideal of finite co-length and $M$ is a finitely generated $\Z$-graded $R$-module. In \Cref{the main theorem} we prove that Frobenius-Poincar\'e functions are \textit{entire functions}.\\

Given $(M,R,I)$ as above and a non-negative integer $d$, define a sequence of  functions $F_n(M, R, I, d)$, where for a complex number $y$,
\begin{equation}\label{the sequence}
F_n(M,R,I,d)(y)= \expand{\frac{M}{\bp{I}{n}M}}{d}.
\end{equation}
Since $M/I^{[p^n]}M$ has only finitely many non-zero graded pieces, each $F_n(M,R,I,d)$ is a polynomial in $e^{-iy/p^n}$, hence is an entire function. When the context is clear, we suppress one or more of the parameters among $M, R, I, d$ in the notation $F_n(M,R,I,d)$. Whenever there is no explicit reference to the parameter $d$ in $F_n(M,R,I,d)$, it should be understood that $d= \text{dim}(M)$.\\\\
The goal in this section is to prove the following result:
\begin{theorem}\label{the main theorem}
Fix a triple $(M,R,I)$, where $R$ is a finitely generated $\N$-graded $k$-algebra (see \Cref{Notation}), $I$ is a finite co-length homogeneous ideal, and $M$ is a finitely generated $\Z$-graded $R$-module.  The sequence of functions $(F_n(M,R,I, \text{dim}(M)))_n$, where
$$F_n(M,R,I, \text{dim}(M))(y)= \expand{\frac{M}{\bp{I}{n}M}}{\text{dim}(M)},$$
converges for every complex number $y$. Furthermore, the convergence is uniform on every compact subset of the complex plane and the limit is an entire function.
\end{theorem}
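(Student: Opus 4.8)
The plan is to reduce to a situation where the sequence $F_n$ can be written in closed form, establish convergence and uniform control there, and then recover the general case by dévissage. First I would reduce to the case where $R$ is a standard graded (or better, polynomial) ring, or at least a graded complete intersection, by choosing a graded Noether normalization $A \hookrightarrow R$; since $M$ is a finitely generated $A$-module and $I^{[p^n]}M$ relates to $(I \cap A)^{[p^n]}$ suitably, one can hope to compare the Hilbert series of $M/I^{[p^n]}M$ over $R$ with data over $A$ via a finite graded free resolution of $M$ over $A$. The point of this reduction is that $\text{Tor}$ modules over a polynomial ring (or complete intersection) are finitely generated with controlled graded Betti numbers — this is exactly what \Cref{growth in the complete intersection case}, \Cref{alternating sum of twists}, and \Cref{radius of conv of twisted Poincare series} were set up to provide.

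Next, I would exploit additivity: writing $H_{M/I^{[p^n]}M}(t)$ as an alternating sum, over a minimal graded free $A$-resolution $G_\bullet \to M$, of the Hilbert series of $G_r \otimes_A A/(I\cap A)^{[p^n]}$ — modulo correction terms coming from the fact that $I^{[p^n]}M \neq (I\cap A)^{[p^n]}M$ in general, which must be controlled by a separate length estimate. For the basic building block $A/(x_1,\dots,x_d)^{[p^n]}$ with $A = k[x_1,\dots,x_d]$, the Hilbert series factors as $\prod_j \frac{1 - t^{p^n \deg x_j}}{1 - t^{\deg x_j}}$, so after the substitution $t = e^{-iy/p^n}$ and scaling by $(1/p^n)^d$, each factor becomes $\frac{1 - e^{-iy\deg x_j}}{p^n(1 - e^{-iy\deg x_j /p^n})}$, which converges uniformly on compacta to $\frac{1 - e^{-iy \deg x_j}}{iy \deg x_j}$ — an entire function, with the apparent pole at $y=0$ removable. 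Thus the building-block functions converge to explicit entire functions, uniformly on compact sets; assembling the alternating sum over the (twisted) resolution, with the Betti-number bounds guaranteeing that the series in $r$ converges absolutely and uniformly after the rescaling, yields convergence of $F_n$ to an entire limit $F$ on compacta.

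The main obstacle, I expect, is twofold. First, controlling the discrepancy between $I^{[p^n]}M$ and $(I \cap A)^{[p^n]}M$ (equivalently, passing between the $R$-module structure and the $A$-module structure): one needs a uniform-in-$n$ estimate showing that the length of the relevant cokernel, once rescaled by $p^{-n\dim M}$ and summed against $e^{-iy\nu}$, contributes terms that either vanish in the limit or are themselves handled by the same resolution machinery — this is where a filtration argument or a comparison of Hilbert-Kunz functions (citing Monsky-type existence, \Cref{HK multiplicity}) enters. Second, the interchange of the limit $n \to \infty$ with the infinite sum over homological degree $r$ in the resolution: this requires a dominating bound uniform in $n$ and in $y$ on compacta, which is precisely the content one extracts from \Cref{radius of conv of twisted Poincare series} — the Betti numbers grow polynomially, while each twisted building block, after rescaling, is bounded on compacta by something decaying geometrically in the internal degree $s$ (because $|e^{-iy s/p^n}|$ with $s$ in a range of size $\sim p^n$ forces the relevant geometric series to converge with radius governed by $|e^{-iy/p^n}|$, which tends to $1$ but is dominated for $y$ in a fixed compact set). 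Making this last uniformity honest — trading the growing radius of the disk of convergence against the growing number of terms — is the technical heart, and I would isolate it as a lemma comparing $\sum_s |\mathbb{B}_M(s)|\, |e^{-iys/p^n}|$ to a convergent integral.
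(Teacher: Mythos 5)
Your approach differs fundamentally from the paper's, and I believe it has genuine gaps that go beyond technical bookkeeping. Let me name them.

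First, the reduction via a graded Noether normalization $A \hookrightarrow R$ does not interact well with the ideal $I$: $I$ is an ideal of $R$, not of $A$, and the ideal $(I \cap A)R$ is in general strictly smaller than $I$. Unless $I$ happens to lie in the tight closure of $(I \cap A)R$, the Hilbert--Kunz multiplicities with respect to $I$ and to $(I\cap A)R$ are genuinely different, so $\lambda\bigl(I^{[p^n]}M / (I\cap A)^{[p^n]}M\bigr)$ is of the \emph{leading} order $p^{n\dim M}$. The discrepancy you call a ``correction term'' is therefore not a lower-order contribution that a length estimate can absorb after dividing by $p^{n\dim M}$ --- it changes the answer. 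This is why the paper never tries to replace $I$ by an ideal of a Noether normalization; the machinery of \Cref{restriction of scalars} is set up to move in the opposite direction, from $R$ to a finite ring \emph{extension} (or quotient), where $I$ becomes an ideal after extension.

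Second, the identity you want --- $H_{M/J^{[p^n]}M}$ as the alternating sum of $H_{G_r \otimes_A A/J^{[p^n]}}$ over a minimal free $A$-resolution $G_\bullet \to M$ --- is not an identity. That alternating sum computes $\chi^A(M, A/J^{[p^n]})(t) = \sum_r (-1)^r H_{\operatorname{Tor}_r^A(M, A/J^{[p^n]})}(t)$, whereas the piece you want is only the $r = 0$ term. The higher Tor modules $\operatorname{Tor}_r^A(M, A/J^{[p^n]})$ for $r \geq 1$ are supported at the maximal ideal, but their lengths can also be of order $p^{n\dim M}$ (unless $M$ is $A$-free, i.e.\ unless $R$ is Cohen--Macaulay and $M = R$). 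So these ``corrections'' also enter at leading scale. The paper sidesteps this by using the exact Avramov--Buchweitz identity, which resolves the \emph{varying} module $M/I^{[p^n]}M$ over $S$ rather than resolving the fixed module $M$ --- but that identity involves Betti numbers $\mathbb{B}^S(j,n)$ that depend on $n$, and the convergence of $\sum_j \mathbb{B}^S(j,n) e^{-iyj/p^n}$ as $n \to \infty$ is logically \emph{equivalent} to the theorem you are trying to prove, not an input. Indeed, the paper proves \Cref{description in terms of Betti number} \emph{from} the main theorem, using that $F$ is already known to be entire; running that argument in reverse would be circular. Relatedly, the growth estimates in \Cref{growth in the complete intersection case}--\Cref{radius of conv of twisted Poincare series} bound the Betti numbers of one fixed module over a complete intersection; the constants produced there depend on that module, and so for $M/I^{[p^n]}M$ they depend on $n$ --- exactly the uniformity you would need is not supplied.

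For contrast, the paper's proof is more self-contained and takes an entirely different route. It first reduces to the case $M = R$ a graded domain via a d\'evissage using four-term exact sequences whose outer terms have lower Krull dimension (\Cref{generic behaviour}, \Cref{vanishing}), together with shift-invariance (\Cref{invariance after a shift}) and restriction of scalars (\Cref{restriction of scalars}). Then, for $R$ a domain with $k^p \subseteq k$ finite, it exploits a ``Frobenius self-similarity'': $R$ has generic rank $p^d[k:k^p]$ over $R^p$, so there is an exact sequence $0 \to K \to \bigoplus_j R^p(h_j) \to R \to C \to 0$ with $\dim K, \dim C < d$. Combined with the base-change identity $F_{n+1}(R,R,I)(y) = \frac{1}{p^d[k:k^p]} F_n(R,R^p,IR^p)(y/p)$ from \Cref{base change}, \Cref{vanishing} gives $|F_{n+1}(R,R,I)(y) - F_n(R,R,I)(y)| = O(p^{-n})$ uniformly on compacta, which makes $(F_n)$ uniformly Cauchy. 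The key length bound (Lemma 7.2 of \cite{HunekeExp}, quoted as \Cref{bounded growth}) plays the role of your Monsky-type estimate, but the mechanism of convergence is the Frobenius self-comparison, not a closed-form evaluation of the building blocks. If you want a homological proof along the lines you sketch, the natural class to reduce to is $R/I$ of finite projective dimension over $R$ (where \Cref{finite projective dimesnion case} does give a closed form with a finite resolution and no correction terms at all), but passing from a general $(M,R,I)$ to that situation needs precisely the d\'evissage through low-dimensional modules and Frobenius comparison that the paper does.
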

\Cref{the main theorem} motivates the next definition.
\begin{definition}\label{main definition} The \textit{Frobenius-Poincar\'e function} of the triple $(M,R,I)$ is the limit of the  convergent sequence of functions $$(F_n(M,R,I, \text{dim}(M)))_n$$ as defined in \Cref{the main theorem}. The Frobenius-Poincar\'e function of the triple $(M,R,I)$ is denoted by $F(M,R,I, \text{dim}(M))$ or alternately $F(M,R,I)$ or just $F(M)$ when the other parameters are clear from the context.
\end{definition}
\vspace{.5cm}

\noindent Before giving examples of Frobenius-Poincar\'e functions, we single out a limit computation.
\begin{lemma}\label{the essential limit}
Given a complex number $a$, the sequence of functions $(p^n(1-e^{-iay/p^n}))_n$ converges to the function $g(y)= iay$ and the convergence is uniform on every compact subset of the complex plane.
\end{lemma}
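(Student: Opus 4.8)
The plan is to reduce the statement to the elementary power-series estimate
\begin{equation*}
|e^z - 1 - z| \;\le\; |z|^2\, e^{|z|} \qquad \text{for all } z \in \mathbb{C},
\end{equation*}
which one obtains by writing $e^z - 1 - z = \sum_{j \ge 2} z^j/j!$ and comparing termwise with $|z|^2\sum_{k \ge 0}|z|^k/k!$. This single inequality, applied at a well-chosen argument, already contains everything.

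Concretely, I would substitute $z = -aiy/p^n$. Since $p^n\bigl(1 - e^{-aiy/p^n}\bigr) = -p^n\bigl(e^{-aiy/p^n} - 1\bigr)$, the inequality above becomes, after multiplying by $p^n$ and rearranging,
\begin{equation*}
\left| p^n\bigl(1 - e^{-aiy/p^n}\bigr) - aiy \right| \;=\; p^n\left| e^{-aiy/p^n} - 1 + \tfrac{aiy}{p^n} \right| \;\le\; \frac{|a|^2|y|^2}{p^n}\, e^{|a||y|/p^n}.
\end{equation*}
Then, given a compact $K \subseteq \mathbb{C}$, I set $R = \max_{y \in K}|y| < \infty$; using $p^n \ge 1$, the right-hand side is bounded on $K$ by $|a|^2 R^2 e^{|a|R}/p^n$, which tends to $0$ as $n \to \infty$ uniformly in $y \in K$. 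Hence $\bigl(p^n(1 - e^{-aiy/p^n})\bigr)_n$ converges to $g(y) = aiy$ uniformly on $K$, and in particular pointwise on all of $\mathbb{C}$.

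I do not expect any genuine obstacle here: the only points requiring care are tracking the sign in $p^n(1-e^{-aiy/p^n}) = -p^n(e^{-aiy/p^n}-1)$ and recording the standard exponential tail bound. An equivalent route, if one prefers to avoid quoting the tail bound, is to expand directly $p^n(1 - e^{-aiy/p^n}) = aiy\sum_{j \ge 1}\frac{(-1)^{j+1}}{j!}(aiy/p^n)^{j-1} = aiy + aiy\sum_{j\ge 2}\frac{(-1)^{j+1}}{j!}(aiy/p^n)^{j-1}$ and bound the remaining series by a geometric series in $|a|R/p^n$ once $p^n > |a|R$; this is the same computation in a different guise.
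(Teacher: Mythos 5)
Your proof is correct and uses the same underlying idea as the paper, namely the power series expansion of $e^{-aiy/p^n}$; the paper simply states that the result is ``apparent'' from that expansion, whereas you make the tail estimate $|e^z-1-z|\le |z|^2 e^{|z|}$ explicit and verify the uniform convergence on compact sets. No issues.
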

\begin{proof}

Using the power series representing $e^z$ around the origin, we get
\[p^n(1-e^{-iay/p^n})= iay+ \sum_{j=2}^{\infty}\frac{(-iay)^j}{j!(p^n)^{j-1}}.\]
So
$$|p^n(1-e^{-iay/p^n})-iay| \leq \frac{1}{p^n}\sum_{j=2}^{\infty}\frac{|ay|^j}{j!} \leq \frac{1}{p^n}e^{|ay|}.$$
Since $e^{|ay|}$ is bounded on any compact subset, we get the desired unifrom convergence.
\end{proof}
\begin{example}\label{one variable polynomial case}
Consider the $\N$-graded polynomial ring in one variable $R= \mathbb{F}_p[X]$ where $X$ has degree $\delta \in \N$ and elements of $\mathbb{F}_p$ have degree zero. Take $I$ to be the ideal generated by $X$. Then, for any nonzero $y \in \mathbb{C}$,
$$F_n(R)(y)= \frac{1}{p^n} \sum\limits_{j=0}^{p^n-1}e^{-iy\delta j/p^n}= \frac{1}{p^n}\frac{1-e^{-i\delta y}}{1-e^{-i\delta y/p^n}}.$$ 
Taking limit as $n$ goes to infinity in the above equation and using \Cref{the essential limit}, we get that for a non-zero complex number $y$, $F(R)(y)= \frac{1-e^{-i\delta y}}{i\delta y}$. Note that $\frac{1-e^{-i\delta y}}{i\delta y}$ can be extended to an analytic function with value one at the origin. Since $F_n(0)=1$ for all $n$, $F(R)$ and the analytic extension of $\frac{1-e^{-i\delta y}}{i\delta y}$ are the same function.\\\\
Similar computation shows that the Frobenius-Poincar\'e function of the triple $(R, I, R)$ where $I$ is the ideal generated by $X^t$ is $\frac{1-e^{-i\delta t y}}{i \delta y}$.
\end{example}
\begin{example}\label{multivariable polynomial ring}
Take $R= \mathbb{F}_p[X_1, \ldots, X_n]$ with the grading assigning degree $\delta_j$ to $X_j$ and degree zero to the elements of $\mathbb{F}_p$. Since as graded rings,
$$\frac{\mathbb{F}_p[X_1, \ldots, X_d]}{(X_1^{p^n}, X_2^{p^n}, \ldots, X_d^{p^n})} \cong \mathbb{F}_p[X_1]/(X_1^{p^n}) \otimes_ {\mathbb{F}_p}\mathbb{F}_p[X_2]/(X_2^{p^n}) \otimes_{\mathbb{F}_p} \ldots \otimes_ {\mathbb{F}_p}\mathbb{F}_p[X_d]/(X_d^{p^n}),$$
we have $F_n(R, R, (X_1, \ldots, X_n))(y)= F_n(\mathbb{F}_p[X_1], \mathbb{F}_p[X_1], (X_1)). \ldots F_n(\mathbb{F}_p[X_d], \mathbb{F}_p[X_d], (X_d))$.
So from \Cref{one variable polynomial case}, it follows that $F(R, R, (X_1, \ldots, X_d))(y)= \prod \limits_{j=1}^{d} (\frac{1-e^{-i \delta_j y}}{i\delta_j y})$.
\end{example}
\begin{remark}\label{motivation}
 The fractional exponents of exponentials in the definition of Frobenius-Poincar\'e function of a triple $(M,R,I)$ comes from the natural $\frac{1}{p^n}\Z$ grading on $F^n_*(M)$. Here $F^n_*(M)$ is the abelian group $M$ enowed with the $R$-module structure coming from the restriction of scalars via the $n$-th iteration of Frobenius $F^n_R: R \rightarrow R$. The $\frac{1}{p^n} \Z$-graded structure on $F^n_*(M)$ is as follows: for an integer $m$, $F^n_*(M)_{m/p^n}=M_m$. For example, when $R$ is a domain, the $\frac{1}{p^n}\Z$-grading on $F^n_*(R)$ described above is the one obtained by importing the natural $\frac{1}{p^n}\Z$-grading on $R^{1/p^n}$ via the $R$-module isomorphism $R^{1/p^n} \rightarrow F^n_*(R)$ given by the $p^n$-th power map.\\\\
 Note that as $\frac{1}{p^n}\Z$-graded modules $F^n_*(\frac{M}{\bp{I}{n}M})\cong F^n_*(M)\otimes_R R/I$. Set $\ell$ to be the degree of the field extension $k^p \subseteq k$ and $\Z[1/p]$ to be the ring $\Z$ with $p$ inverted. Alternatively $F_n$ can be expressed as, 
 $$F_n(M,R,I)(y)= (\frac{1}{\ell p^{\text{dim}(M)}})^n\underset{t \in \Z[1/p]}{\sum}\lambda_k \left( (F^n_*(M) \otimes R/I)_t \right) e^{-ity}.$$
 That is, $F_n$ is the \textit{Hilbert series} (see \Cref{Hilbert series and Hilbert-Samuel multiplicity}) of $F^n_*(M) \otimes R/I$  normalized by $(\frac{1}{\ell p^{\text{dim(M)}}})^n$ in the `variable' $e^{-iy}$. The associated Frobenius-Poincar\'e function is the limit of these normalized Hilbert series.
\end{remark}
\begin{remark}\label{reduction to the F-finite case}
Given a field extension $k \subseteq k'$, $R \otimes_k k'$ is a finitely generated $\N$-graded $k'$-algebra. Note that $F_n(M,R,I,d)= F_n(M \otimes_k k', R \otimes_k k', I \otimes_k k', d)$. Thus for any complex number $y$, $(F_n(M, R,I,d)(y))_n$ converges if and only if $(F_n(M\otimes_k k', R\otimes_k k', I\otimes_k k',d)(y))_n$ converges. So in the proof of \Cref{the main theorem} without loss of generality we can assume that $k^p \subseteq k$ is a finite extension or even algebraically closed. thus we can assume $k=k^p$. 
\end{remark}
The remainder of the section is dedicated to the proof of \Cref{the main theorem}. The proof has two main steps. First, we reduce the problem to the case where $R$ is an $\N$-graded domain and $M=R$ as a graded module- this reduction step is achieved in \Cref{reduction to domain}. Then we show that when $R$ is a domain, $F_n(R,R,I)(y)$ is uniformly Cauchy on every compact subset of $\mathbb{C}$. Thus $F_n(R,R,I)$ converges uniformly on every compact subset of the complex plane. The analyticity of the limiting function then follows from Theorem 1 in Chapter 5 of \cite{Ahlfors}: \textit{a sequence of holomorphic functions on an open subset $U \subseteq \mathbb{C}$, which converges uniformly on every compact subset of $U$ has a holomorphic limiting function}.\\\\
One of the purposes of the next result is to show that in the definition of $F_n$ in \eqref{the sequence}, it is enough to take the sum over indices $j$, where $|j|/p^n$ is bounded by a constant which is independent of $n$.
\begin{lemma}\label{support}
Let $M$ be a finitely generated $\Z$-graded $R$-module. Given $i \in \N$, there exists a positive integer $C$ such that for all $n$ whenever $|j| \geq Cp^n$, $(\text{Tor}^R_i(M, \frac{R}{\bp{I}{n}R}))_j$ is zero.
\end{lemma}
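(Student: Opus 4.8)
The plan is to use a fixed graded free resolution of $M$ over $R$ and track where the Tor modules can live, reducing everything to a uniform (in $n$) bound on the degrees of generators of $\bp{I}{n}R$ and of the syzygies of $M$. First I would pick a minimal graded free resolution $(G_\bullet, d_\bullet)$ of $M$ over $R$; since $M$ is finitely generated over a Noetherian graded ring, each $G_r \cong \bigoplus_s R(-s)^{b^R_M(r,s)}$ is finitely generated, and by \Cref{alternating sum of twists} (or just by finite generation of $\text{Tor}^R_i(M,k)$) only finitely many twists $s$ occur in $G_i$ and $G_{i+1}$. Let $a$ be an integer exceeding all these twists, so that $G_i$ and $G_{i+1}$ are generated in degrees $\le a$. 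Tensoring with $R/\bp{I}{n}R$ gives the complex $G_\bullet \otimes_R R/\bp{I}{n}R$, and $\text{Tor}^R_i(M, R/\bp{I}{n}R)$ is a subquotient of $G_i \otimes_R R/\bp{I}{n}R$.

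Next I would bound the degrees in which $G_i \otimes_R R/\bp{I}{n}R$ is nonzero. Write $G_i = \bigoplus_\alpha R(-s_\alpha)$ with $s_\alpha \le a$; then $G_i \otimes_R R/\bp{I}{n}R = \bigoplus_\alpha (R/\bp{I}{n}R)(-s_\alpha)$, so its graded piece in degree $j$ vanishes once $j - s_\alpha$ lies outside the range of degrees of $R/\bp{I}{n}R$, for every $\alpha$. Hence it suffices to bound, uniformly in $n$, the top nonzero degree of $R/\bp{I}{n}R$ (the bottom degree is $0$, since $R$ is $\N$-graded, so the issue is only an upper bound on $j$; the lower bound on $j$ is automatic up to the finitely many fixed twists $s_\alpha$, and one absorbs the small negative shift into the constant). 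Let $f_1,\dots,f_m$ be homogeneous generators of $I$ of degrees $e_1,\dots,e_m$, and let $D$ be the largest degree of a minimal homogeneous generator of $R$ as a $k$-algebra (equivalently, the top degree of a fixed finite generating set). Since $\bp{I}{n}R$ contains $f_1^{p^n},\dots,f_m^{p^n}$ and $I$ is $\mathfrak m$-primary, $R/\bp{I}{n}R$ is a quotient of $R/(f_1^{p^n},\dots,f_m^{p^n})$; the latter is annihilated in all degrees $> p^n(e_1+\dots+e_m) + (\text{a fixed constant depending only on } R)$ — concretely, because $R/(f_1,\dots,f_m)$ has finite length it vanishes above some fixed degree $d_0$, and then $R/(f_1^{p^n},\dots,f_m^{p^n})$ vanishes above $d_0 + (p^n-1)(e_1+\dots+e_m)$ by the standard filtration argument. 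Thus the top nonzero degree of $R/\bp{I}{n}R$ is at most $p^n \cdot (e_1 + \dots + e_m) + d_0$, which is $\le C' p^n$ for a constant $C'$ independent of $n$ once $p^n \ge 1$.

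Combining: $(\text{Tor}^R_i(M, R/\bp{I}{n}R))_j = 0$ whenever $j - s_\alpha > C' p^n$ for all $\alpha$, i.e. whenever $j > a + C' p^n$, and also whenever $j < \min_\alpha s_\alpha$, which is a fixed constant; choosing $C$ larger than $C' + a$ (and larger than $|\min_\alpha s_\alpha|$) gives $(\text{Tor}^R_i(M, R/\bp{I}{n}R))_j = 0$ for $|j| \ge C p^n$, as required. The only step that needs a little care rather than bookkeeping is the uniform upper bound on the top degree of $R/\bp{I}{n}R$; I expect that to be the main (though routine) obstacle, handled by the filtration $R \supseteq (f_1^{p^n}) \supseteq \dots$ with successive quotients twists of $R/(f_1,\dots,f_j,f_{j+1}^{p^n})$, each of finite length and supported in a degree range that grows linearly in $p^n$ with slope $\sum_j e_j$.
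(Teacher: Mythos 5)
Your proof is correct and follows the same overall strategy as the paper: reduce via a minimal graded free resolution of $M$ to bounding the degrees in which (shifted copies of) $R/\bp{I}{n}R$ are nonzero, then show that the top nonzero degree of $R/\bp{I}{n}R$ is $O(p^n)$. The one place you and the paper diverge is in how that top-degree bound on $R/\bp{I}{n}R$ is obtained. The paper proves the ideal containment $R_{\geq m_0\mu p^n\Delta}\subseteq (R_{\geq\delta})^{m_0\mu p^n}\subseteq I^{\mu p^n}\subseteq\bp{I}{n}$, which requires introducing the minimal and maximal degrees $\delta,\Delta$ of the algebra generators of $R$ and the exponent $m_0$ with $(R_{\geq\delta})^{m_0}\subseteq I$. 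You instead note $\bp{I}{n}=(f_1^{p^n},\dots,f_m^{p^n})$ and bound the top degree of $R/(f_1^{a_1},\dots,f_m^{a_m})$ by $d_0+\sum_j(a_j-1)e_j$ via the filtration with graded subquotients that are shifts of $R/(f_1,\dots,f_{j},f_{j+1}^{a_{j+1}},\dots)$; this is also correct (only right-exactness of the relevant short sequences is needed) and is self-contained in terms of the generators of $I$, without reference to the algebra generators of $R$. Both routes are routine; yours is arguably slightly more economical in the data it invokes, while the paper's containment $R_{\geq Cp^n}\subseteq\bp{I}{n}$ is reused implicitly elsewhere when bounding $M/\bp{I}{n}M$ directly rather than through a free cover. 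No gaps.
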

\begin{proof}
We claim that it is sufficient to prove existence of some $C$ such that $(\frac{R}{\bp{I}{n}R})_{j}=0$ for $|j| \geq Cp^n$. To see this, consider a \textit{graded minimal free resolution} (see section 2.2) of the $R$-module $M$:
$$\begin{tikzcd}
{} \arrow[r, "\partial_{i+1}"] & F_i \arrow[r, "\partial_i"] & \ldots \arrow[r, "\partial_2"] & F_1 \arrow[r, "\partial_1"] & F_0 \arrow[r, "\partial_0"] & M \arrow[r] & 0.
\end{tikzcd}$$
As a graded module,
$\text{Tor}^R_i(M, \frac{R}{\bp{I}{n}R})$ is a subquotient of $ \text{Tor}^R_0(F_i, \frac{R}{\bp{I}{n}}) \cong F_i/\bp{I}{n}F_i$ and $F_i$ is a direct sum of finitely many graded modules of the form $R(l)$, so our claim follows.

Now choose homogeneous elements of positive degrees $r_1, r_2, \ldots, r_s$ of $R$ such that $R= k[r_1, \ldots, r_s]$. Let $\Delta= \text{max}\{\text{deg}(r_i)\}$ and $\delta = \text{min}\{\text{deg}(r_i)\}$. Denote the ideal of generated by homogeneous elements of degree at least $t$ by $R_{\geq t}$. Then note that for every $n \in \N$, 
\begin{equation}\label{eq9}
R_{\geq n\Delta} \subseteq (R_{\geq \delta})^n.
\end{equation}
Indeed,
for each $r \in R_t$ where $t \geq n\Delta$, we can choose homogeneous elements $\lambda_1, \ldots, \lambda_s$ of $R$ such that
$$r= \lambda_1 r_1 + \ldots+ \lambda_sr_s.$$
Then each $\lambda_i \in R_{\geq t- \Delta} \subseteq R_{(n-1)\Delta}$.  Since each $r_i$ is in $R_{\geq \delta}$, the claimed assertion in (\ref{eq9}) follows by induction on $n$.

Pick $m_0$ such that $(R_{\geq \delta})^{m_0} \subseteq I$. Suppose the minimal number of homogeneous generators of $I$ is $\mu$. Using \eqref{eq9}, we get
$$R_{\geq m_0\mu p^n \Delta} \subseteq (R_{\geq \delta})^{m_0 \mu p^n} \subseteq I^{\mu p^n} \subseteq \bp{I}{n}.$$
Therefore, if we set $C= m_0 \mu \Delta$, for $m \geq Cp^n$, $(\frac{R}{\bp{I}{n}R})_m =0$.
\end{proof}

\noindent We now bound the asymptotic growths of two length functions.
\begin{lemma}\label{bounded growth}
Let $(S,m)$ be Noetherian local ring containing a field of positive characteristic $p$, $J$ be an $m$-primary ideal. For any finitely generated $S$-module $N$, there exist positive constants $C_1, C_2$ such that for all $n \in \N$,
$$\lambda_S(N/\bp{J}{n}N) \leq C_1 (p^n)^{\text{dim}(N)} \, \, \, \, \text{and} \, \,
\hspace{.5cm} \lambda_S(\text{Tor}^S_1(N, \frac{S}{\bp{J}{n}S})) \leq C_2 (p^n)^{\text{dim}(N)}.$$
\end{lemma}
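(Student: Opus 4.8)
The plan is to reduce the statement to a pair of standard facts about Hilbert--Kunz functions and Koszul homology, and to carry out both estimates simultaneously by induction on $\dim(N)$. First I would fix a system of parameters and observe that it suffices to bound the two quantities for one module at a time, so I may assume $N \neq 0$; if $\dim(N) = 0$, then $N/\bp{J}{n}N$ is a quotient of $N$ itself, hence $\lambda_S(N/\bp{J}{n}N) \leq \lambda_S(N)$ is constant, and for the $\mathrm{Tor}$ term I would use that $\mathrm{Tor}_1^S(N, S/\bp{J}{n}S)$ is a subquotient of $F_1/\bp{J}{n}F_1$ for a fixed finite free module $F_1$ appearing in a free resolution of $N$, so it too is bounded by a constant times $\lambda_S(F_1)$ — in any case $O(1) = O((p^n)^0)$.

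For the inductive step with $d = \dim(N) \geq 1$, the first bound $\lambda_S(N/\bp{J}{n}N) \leq C_1 (p^n)^d$ is essentially Monsky's theorem (\Cref{HK multiplicity}): the Hilbert--Kunz limit $\lim_n (1/p^n)^d \lambda_S(N/\bp{J}{n}N)$ exists, hence the sequence $(1/p^n)^d \lambda_S(N/\bp{J}{n}N)$ is bounded, which is exactly the claim. (If one wants a self-contained argument, pick $x \in m$ part of a system of parameters for $N$ with $\dim(N/xN) = d-1$; the exact sequence $0 \to (\bp{J}{n}N :_N x)/\bp{J}{n}N \to N/\bp{J}{n}N \xrightarrow{x} N/\bp{J}{n}N \to N/(xN + \bp{J}{n}N) \to 0$ relates the lengths, and after choosing $x$ to be a nonzerodivisor modulo a submodule of finite length one reduces the growth of $\lambda_S(N/\bp{J}{n}N)$ to that of $\lambda_S((N/xN)/\bp{J}{n}(N/xN))$ plus a bounded error, invoking the inductive hypothesis in dimension $d-1$; the kernel term is handled because $x^{p^n}$ kills it.)

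For the second bound on $\lambda_S(\mathrm{Tor}_1^S(N, S/\bp{J}{n}S))$, the key idea is to replace $S/\bp{J}{n}S$ by a Koszul complex. Choose generators $a_1,\dots,a_r$ of $J$; then $\bp{J}{n}$ is generated by $a_1^{p^n},\dots,a_r^{p^n}$, and there is a surjection from the Koszul homology $H_1(a_1^{p^n},\dots,a_r^{p^n}; N)$ onto $\mathrm{Tor}_1^S(N, S/\bp{J}{n}S)$ — more precisely $\mathrm{Tor}_1^S(N,S/\bp{J}{n}S)$ is a subquotient of $H_1$ of the Koszul complex on the $p^n$-th powers with coefficients in $N$, and that $H_1$ is in turn a subquotient of $N^{\binom{r}{2}}$ modulo $\bp{J}{n}$, so it is annihilated by $\bp{J}{n}$. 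Hence $\lambda_S(\mathrm{Tor}_1^S(N,S/\bp{J}{n}S)) \leq \binom{r}{2}\,\lambda_S(N/\bp{J}{n}N) \leq \binom{r}{2} C_1 (p^n)^{\dim(N)}$, and we may take $C_2 = \binom{r}{2} C_1$. This last reduction is the cleanest route: it deduces the $\mathrm{Tor}$ bound directly from the already-established length bound, so no separate induction is needed for it.

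**Main obstacle.** The genuinely nontrivial input is the first bound, which is Monsky's existence theorem for Hilbert--Kunz multiplicity of a module; everything else is bookkeeping with Koszul complexes and short exact sequences. If one insists on not citing Monsky, the delicate point is choosing the parameter $x$ correctly: $x$ need not be a nonzerodivisor on $N$, so one must first pass to $N / \Gamma$ where $\Gamma$ is the largest submodule of finite length (or of dimension $< d$ supported away from the generic points of $N$ of maximal dimension), absorb the resulting discrepancy into the additive constant using that $\bp{J}{n}$-torsion interacts controllably with the finite-length part, and verify that the error terms at each stage really are $O((p^n)^{d-1})$ rather than $O((p^n)^d)$ — this is where one needs the inductive hypothesis applied to a module of dimension exactly $d-1$, and some care that the ``colength'' terms $\lambda_S((\bp{J}{n}N :_N x)/\bp{J}{n}N)$ do not blow up, which follows since this module is killed by both $x^{p^n}$ and $\bp{J}{n}$, hence is a module over $S/(\bp{J}{n}, x^{p^n})$ of dimension $\leq d-1$.
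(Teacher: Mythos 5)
Your handling of the first bound is fine and matches the paper in spirit: both you and the paper reduce it to a known estimate (you cite Monsky's existence theorem, the paper cites Lemma 3.5 of \cite{HunekeExp}); these are interchangeable.

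The second bound is where there is a genuine gap. You correctly identify that $\mathrm{Tor}_1^S(N, S/\bp{J}{n}S)$ is a quotient of the Koszul homology $H_1$ on the $p^n$-th powers with coefficients in $N$ (this is also the paper's first move, phrased via the Frobenius functor applied to a Koszul complex). But the step that does all the work in your writeup is the claim that this $H_1$ is ``a subquotient of $N^{\binom{r}{2}}$ modulo $\bp{J}{n}$,'' from which you conclude $\lambda(H_1)\le \binom{r}{2}\lambda(N/\bp{J}{n}N)$. First, a minor slip: cycles live in $K_1\cong N^r$, not $K_2\cong N^{\binom{r}{2}}$. More importantly, what the annihilation $\bp{J}{n}\cdot H_1=0$ actually gives you is that $H_1$ is a quotient of $Z_1/\bp{J}{n}Z_1$, where $Z_1=Z_1(n)\subseteq N^r$ is the module of Koszul $1$-cycles. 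There is no map $Z_1/\bp{J}{n}Z_1 \hookrightarrow N^r/\bp{J}{n}N^r$; the natural map has kernel $(Z_1\cap \bp{J}{n}N^r)/\bp{J}{n}Z_1$, and because $Z_1$ itself varies with $n$ you cannot invoke a fixed Artin--Rees constant to tame it, nor can you bound $\lambda(Z_1/\bp{J}{n}Z_1)$ by the number of generators of $Z_1$ (which can grow with $n$). So ``$H_1$ is a subquotient of $N^r$ and is killed by $\bp{J}{n}$'' simply does not yield $\lambda(H_1)=O(\lambda(N/\bp{J}{n}N))$ by general nonsense; that estimate is a genuine theorem. The paper closes the gap by invoking Theorem 6.6 of \cite{Phantom} (Hochster--Huneke), which bounds $\lambda\bigl(H_1(N\otimes_S \mathfrak{F}^n(K_\bullet))\bigr)\le C_2(p^n)^{\dim N}$ directly. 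You should either cite that theorem (or an equivalent), or supply the devissage argument it encapsulates; the one-line reduction you proposed is not valid as written.
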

\vspace{.5cm}
\begin{proof}
The assertion on the growth of $\lambda_S(N/\bp{J}{n}N)$ is standard, for example see Lemma 3.5 of \cite{HunekeExp} for a proof.  For the other assertion, we present a simplified version of the argument in Lemma 7.2 of \cite{HunekeExp}. Suppose that $J$ is generated by $f_1, \ldots, f_{\mu}$.
 Let $K_{\bullet}$ be the \textit{Koszul complex} of $f_1, f_2, \ldots, f_{\mu}$. Recall that the \textit{Frobenius functor} $\mathfrak{F}$, from the category of $S$-modules to itself, is the scalar extension via the Frobenius $F_S: S \rightarrow S$ (see page 7, \cite{Phantom}). Let $\mathfrak{F}^n(K_{\bullet})$ stand for the complex of $S$-modules obtained by applying $n$-th iteration of $\mathfrak{F}$ to the terms and the boundary maps of $K_{\bullet}$. Then the part $\mathfrak{F}^n(K_1) \rightarrow \mathfrak{F}^n(K_0)$ of $\mathfrak{F}^n(K_{\bullet})$ can be extended to a free resolution of the $S$-module $S/\bp{J}{n}S$. So  $\text{Tor}^S_1(N, \frac{S}{\bp{J}{n}S})$ is isomorphic to a quotient of $H_1(N \otimes_S \mathfrak{F}^n(K_{\bullet}))$. Hence $\lambda(\text{Tor}^S_1(N, \frac{S}{\bp{J}{n}S})) \leq \lambda_S(H_1(N \otimes_S \mathfrak{F}^n(K_{\bullet})))$. The conclusion follows from Theorem 6.6 of \cite{Phantom}, which guarantees that there is a constant $C_2$ such that for all $n$, $\lambda_S(H_1(N \otimes_S \mathfrak{F}^n(K_{\bullet}))) \leq C_2(p^n)^{\text{dim}(N)}$.\end{proof}
\noindent The next result bounds the growth of the sequence $(F_n(M,R,I,d))_n$ on a given compact subset of the complex plane.
\begin{proposition}\label{FP functions are bounded on a compact subset}Let $M$ be a finitely generated $\Z$-graded $R$-module. 
\begin{enumerate}
\item Given a compact subset $A \subseteq \mathbb{C}$, there exists a constant $D$ such that  for all $y\in A$ and $n \in \N$,
$$|F_n(M,R,I,d)(y)| \leq (\frac{1}{p^n})^{d- \text{dim}(M)}D.$$
\item On a given compact subset of $\mathbb{C}$, when $d \geq \text{dim}(M)$, the sequence $(F_n(M,R,I,d))_n$ is uniformly bounded and when $d> \text{dim}(M)$, the sequence $(F_n(M,R,I,d))_n$ uniformly converges to the constant function taking value zero.
\end{enumerate}
\end{proposition}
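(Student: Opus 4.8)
The plan is to get part (2) as a formal consequence of part (1), so the real content is the estimate in (1). If $M=0$ then every $F_n$ is identically zero and there is nothing to prove, so assume $M\neq 0$.

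For part (1), fix a compact set $A\subseteq\mathbb C$ and put $B:=\sup_{y\in A}|\mathrm{Im}(y)|$, which is finite by compactness. First I would invoke \Cref{support} with $i=0$: there is a constant $C$, \emph{independent of $n$}, such that $(M/\bp{I}{n}M)_{j}=0$ whenever $|j|\ge Cp^{n}$. Consequently the sum defining $F_n(M,R,I,d)(y)$ is really a finite sum over the indices $j$ with $|j|<Cp^{n}$. Writing $y=\alpha+i\beta$ with $\alpha,\beta$ real, for each such $j$ we have $|e^{-iyj/p^{n}}|=e^{\beta j/p^{n}}\le e^{B\,|j|/p^{n}}\le e^{BC}$, and this bound is uniform in $y\in A$ and in $n$ precisely because $C$ does not depend on $n$. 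Hence, for all $y\in A$ and all $n$,
\[
|F_n(M,R,I,d)(y)|\;\le\;\Bigl(\tfrac{1}{p^{n}}\Bigr)^{d}e^{BC}\!\!\sum_{|j|<Cp^{n}}\!\lambda\Bigl(\bigl(\tfrac{M}{\bp{I}{n}M}\bigr)_{j}\Bigr)\;=\;\Bigl(\tfrac{1}{p^{n}}\Bigr)^{d}e^{BC}\,\lambda_R\Bigl(\tfrac{M}{\bp{I}{n}M}\Bigr),
\]
the last equality because $M/\bp{I}{n}M$ has finite length over $R$ and $R_0=k$, so its $R$-length is the sum of the $k$-dimensions of its graded pieces.

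It remains to bound $\lambda_R(M/\bp{I}{n}M)$. Localizing $M$, $R$, $I$ at the homogeneous maximal ideal of $R$ (an operation changing neither this length nor $\dim(M)$) puts us in the local setting of \Cref{bounded growth}, whose first inequality supplies a constant $C_1$ with $\lambda_R(M/\bp{I}{n}M)\le C_1(p^{n})^{\dim(M)}$ for all $n$; alternatively this is immediate from the existence of the Hilbert--Kunz multiplicity of $(R,I,M)$ recalled in \Cref{HK multiplicity}, since a convergent sequence is bounded. Substituting, $|F_n(M,R,I,d)(y)|\le(\tfrac{1}{p^{n}})^{d-\dim(M)}e^{BC}C_1$ for all $y\in A$ and $n$, so $D:=e^{BC}C_1$ works for (1). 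For (2): when $d\ge\dim(M)$ the factor $(1/p^{n})^{d-\dim(M)}$ is $\le 1$, giving the uniform bound $D$ on $A$; when $d>\dim(M)$ this factor tends to $0$ as $n\to\infty$, so $\sup_{y\in A}|F_n(M,R,I,d)(y)|\to 0$, i.e.\ uniform convergence to $0$ on $A$. I do not expect a genuine obstacle: the only delicate points are the \emph{uniformity in $n$} of the exponential estimate — exactly what the $n$-independence of $C$ in \Cref{support} secures — together with the routine bookkeeping between $R$-length and $k$-dimension and the standard graded-to-local passage needed to quote \Cref{bounded growth}.
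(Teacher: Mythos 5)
Your proposal is correct and follows essentially the same route as the paper: restrict the sum via \Cref{support}, bound $|e^{-iyj/p^n}|$ uniformly on $A$ (the paper uses $|y|\le C'$ and $|e^{-iyj/p^n}|\le e^{|y||j|/p^n}$, you use $|\mathrm{Im}\,y|\le B$, which gives the sharper but equivalent estimate), then control $\lambda(M/\bp{I}{n}M)$ by \Cref{bounded growth}. The only differences are cosmetic bookkeeping.
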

\begin{proof}
Assertion (2) is immediate from (1).

We now prove (1). Let $C'$ be a positive constant such that for $y \in A$, $|y| \leq C'$. According to \Cref{support}, we can choose a positive constant $C$ such that for all $n \in \N$ and $|j| > Cp^n$, $(M/\bp{I}{n}M)_j=0$. For $y \in A$ and $|j| \leq Cp^n$,
\begin{equation}\label{bound on the sequence}
|e^{-iyj/p^n}|^2= e^{-iyj/p^n}. e^{i\bar{y}j/p^n}= e^{2\text{Re}(-iyj/p^n)}\leq e^{2|-iyj/p^n|} = e^{2|y||j|/p^n} \leq e^{2CC'} \ .
\end{equation}
Now note that,
$$|F_n(M,d)(y)|= |(\frac{1}{p^n})^d \underset{|j|\leq Cp^n}{\sum}\lambda \left((\frac{M}{\bp{I}{n}M})_j \right)e^{-iyj/p^n}| \leq (\frac{1}{p^n})^{d}\underset{|j|\leq Cp^n}{\sum}\lambda \left((\frac{M}{\bp{I}{n}M})_j\right)|e^{-iyj/p^n}|,$$
So using \eqref{bound on the sequence} first and then \Cref{bounded growth}, we get,
$$|F_n(M,d)(y)| \leq (\frac{1}{p^n})^{d} \lambda(\frac{M}{\bp{I}{n}M})e^{CC'} \leq (\frac{1}{p^n})^{d- \text{dim}(M)}C_1e^{CC'}.$$
for some constant $C_1$. 
\end{proof}
\noindent The next few results are aimed towards \Cref{reduction to domain}.
\begin{lemma}\label{vanishing}
Let 
\begin{tikzcd}
0 \arrow[r] & K \arrow[r, "\phi_1"] & M_1 \arrow[r, "\phi"] & M_2 \arrow[r, "\phi_2"] & C \arrow[r] & 0
\end{tikzcd}
be an exact sequence of finitely generated $\Z$-graded $R$-modules (i.e. assume the boundary maps preserve the respective gradings). Let $d$ be an integer greater than both $\text{dim}(K) \, \text{and} \, \, \text{dim}(C)$. 
\begin{enumerate}
   \item Given a compact subset $A\subseteq \mathbb{C}$, there exists a constant $D$, such that for all $y \in A$ and $n \in \N$, 
   $$|F_n(M_2,R,I,d)(y)- F_n(M_1, R,I, d)(y)| \leq \frac{D}{p^n}.$$
   \item The sequence of functions $(F_n(M_2,R,I,d)- F_n(M_1, R,I, d))_n$ converges to the constant function zero and the convergence is uniform on every compact subset of $\mathbb{C}$.
\end{enumerate}
\end{lemma}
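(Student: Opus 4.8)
The plan is to split the four-term exact sequence into two short exact sequences and analyze each by tensoring with $R/\bp{I}{n}R$ and reading off the long exact Tor sequence on graded pieces. Set $N:=\mathrm{im}(\phi)=\ker(\phi_2)$; since $R$ is Noetherian, $N$ is a finitely generated $\Z$-graded $R$-module, and
$$0\to K\xrightarrow{\phi_1} M_1\xrightarrow{\phi} N\to 0,\qquad 0\to N\hookrightarrow M_2\xrightarrow{\phi_2} C\to 0$$
are exact sequences of graded modules. Part (2) is immediate from part (1), and by the triangle inequality it suffices to bound $|F_n(M_1,R,I,d)(y)-F_n(N,R,I,d)(y)|$ and $|F_n(M_2,R,I,d)(y)-F_n(N,R,I,d)(y)|$ separately by (constant)$/p^n$ on $A$. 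Throughout, let $C'$ be a bound on $|y|$ for $y\in A$.

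For the first sequence, tensoring with $R/\bp{I}{n}R$ and using the connecting map $\delta\colon\text{Tor}^R_1(N,R/\bp{I}{n}R)\to K/\bp{I}{n}K$ yields the exact sequence $0\to\mathrm{im}(\delta)\to K/\bp{I}{n}K\to M_1/\bp{I}{n}M_1\to N/\bp{I}{n}N\to 0$. Comparing lengths of degree-$j$ pieces, the alternating sum vanishes, so $\lambda((M_1/\bp{I}{n}M_1)_j)-\lambda((N/\bp{I}{n}N)_j)=\lambda((K/\bp{I}{n}K)_j)-\lambda((\mathrm{im}\,\delta)_j)$, a quantity lying between $0$ and $\lambda((K/\bp{I}{n}K)_j)$ since $\mathrm{im}(\delta)\subseteq K/\bp{I}{n}K$. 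Hence $|F_n(M_1,R,I,d)(y)-F_n(N,R,I,d)(y)|\le (1/p^n)^d\sum_j\lambda((K/\bp{I}{n}K)_j)\,|e^{-iyj/p^n}|$. Now \Cref{support} (with $i=0$) confines the support of $K/\bp{I}{n}K$ to $|j|\le Cp^n$ for a constant $C$ independent of $n$, on which $|e^{-iyj/p^n}|\le e^{CC'}$ for $y\in A$ exactly as in the proof of \Cref{FP functions are bounded on a compact subset}, while \Cref{bounded growth} gives $\sum_j\lambda((K/\bp{I}{n}K)_j)=\lambda(K/\bp{I}{n}K)\le C_1(p^n)^{\dim K}$. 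Thus this difference is at most $(1/p^n)^{d-\dim K}C_1e^{CC'}\le D_1/p^n$, since $d>\dim K$ forces $d-\dim K\ge 1$.

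The second sequence is handled the same way, now with the connecting map $\delta'\colon\text{Tor}^R_1(C,R/\bp{I}{n}R)\to N/\bp{I}{n}N$: here $\lambda((M_2/\bp{I}{n}M_2)_j)-\lambda((N/\bp{I}{n}N)_j)=\lambda((C/\bp{I}{n}C)_j)-\lambda((\mathrm{im}\,\delta')_j)$, whose absolute value is at most $\lambda((C/\bp{I}{n}C)_j)+\lambda((\text{Tor}^R_1(C,R/\bp{I}{n}R))_j)$. The first summand is bounded exactly as in the previous paragraph. For the second, $\text{Tor}^R_1(C,R/\bp{I}{n}R)$ is finitely generated and annihilated by the $\mathfrak m$-primary ideal $\bp{I}{n}$ (where $\mathfrak m$ is the homogeneous maximal ideal of $R$), hence of finite length; \Cref{support} (with $i=1$) bounds its support by $|j|\le C''p^n$, and localizing at $\mathfrak m$ and applying \Cref{bounded growth} gives $\lambda(\text{Tor}^R_1(C,R/\bp{I}{n}R))\le C_2(p^n)^{\dim C}$. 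Combining the support and length bounds as before, $|F_n(M_2,R,I,d)(y)-F_n(N,R,I,d)(y)|\le (1/p^n)^{d-\dim C}(C_1'+C_2)e^{C''C'}\le D_2/p^n$, using $d>\dim C$. Adding the two estimates proves (1) with $D=D_1+D_2$, and (2) follows since $D/p^n\to 0$.

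I expect no serious conceptual obstacle: the argument is bookkeeping layered on \Cref{support}, \Cref{bounded growth}, and the compactness estimate for $|e^{-iyj/p^n}|$. The points that need care are extracting the two-sided degreewise inequality from the connecting homomorphism $\delta$ (and handling the possibly-negative analogue for $\delta'$ via the crude bound by the two nonnegative terms), and verifying that the $\text{Tor}_1$ error term is genuinely of finite length so that \Cref{bounded growth} applies after localization. In both places the hypothesis $d>\max(\dim K,\dim C)$ is used precisely to make the exponents $d-\dim K$ and $d-\dim C$ at least $1$, so the error terms decay like $1/p^n$ rather than merely tending to $0$.
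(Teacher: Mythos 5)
Your proof is correct and follows essentially the same approach as the paper: split the four-term exact sequence at $N=\mathrm{im}(\phi)$ into two short exact sequences, tensor with $R/\bp{I}{n}R$, read off degreewise length identities from the long exact Tor sequence, and then combine \Cref{support}, \Cref{bounded growth}, and the compactness estimate on $|e^{-iyj/p^n}|$. The only cosmetic difference is that you bound $|F_n(M_1)-F_n(N)|$ and $|F_n(M_2)-F_n(N)|$ separately via the triangle inequality, whereas the paper assembles the two length identities into a single expression for $F_n(M_2)-F_n(M_1)$ (equation \eqref{eq1}) before estimating; the resulting bounds are the same.
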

\begin{proof}
We prove assertion (1) below, assertion (2) is immediate from assertion (1).\\

Break the given exact sequence into two short exact sequences:
\begin{equation}
    \begin{tikzcd}
    0 \arrow[r] & K \arrow[r, "\phi_1"] & M_1 \arrow[r, "\phi"] & Im(\phi) \arrow[r] & 0 \label{*} \tag{$*$}
    \end{tikzcd} \ ,
\end{equation}
\begin{equation}
    \begin{tikzcd}
    0 \arrow[r] & Im(\phi) \arrow[r] & M_2 \arrow[r, "\phi_2"] & C \arrow[r] & 0 \label{**} \tag{$**$} \ .
    \end{tikzcd}
\end{equation}
Now apply $\otimes_R\frac{R}{\bp{I}{n}R}$ to \eqref{*} and \eqref{**}; the corresponding long exact sequences of Tor modules give the following two exact sequences of graded modules for each $n$:
\begin{equation}\label{*_n}
\begin{tikzcd}
\frac{K}{\bp{I}{n}K} \arrow[r, "{\phi_{1,n}}"] & \frac{M_1}{\bp{I}{n}M_1} \arrow[r] & \frac{Im(\phi)}{\bp{I}{n}Im(\phi)} \arrow[r] & 0 \tag{$*_n$} \ ,
\end{tikzcd}
\end{equation}
\begin{equation}\label{**_n}
\begin{tikzcd}
\text{Tor}^R_1(C, \frac{R}{\bp{I}{n}R})\arrow[r, "\tau_n"]& \frac{Im(\phi)}{\bp{I}{n}Im(\phi)} \arrow[r] & \frac{M_2}{\bp{I}{n}M_2} \arrow[r] & \frac{C}{\bp{I}{n}C} \arrow[r] & 0 \tag{$**_n$} \ .
\end{tikzcd}
\end{equation}
Using \eqref{*_n} and \eqref{**_n}, for each $j \in \Z$, we get
$$\lambda \left((\frac{M_1}{\bp{I}{n}M_1})_j\right) = \lambda \left((\phi_{1,n}(\frac{K}{\bp{I}{n}K}))_j\right)+ \lambda \left((\frac{Im(\phi)}{\bp{I}{n}Im(\phi)})_j \right),$$
$$\lambda \left((\frac{M_2}{\bp{I}{n}M_2})_j\right)= \lambda\left((\frac{Im(\phi)}{\bp{I}{n}Im(\phi)})_j \right) - \lambda \left(\tau_n(\text{Tor}^R_1(C, \frac{R}{\bp{I}{n}R}))_j\right)+ \lambda \left((\frac{C}{\bp{I}{n}C})_j\right).$$
Therefore, 
\begin{equation}\label{eq1}
\begin{split}
& F_n(M_2,R,I,d)(y)- F_n(M_1, R,I, d)(y)\\
& = F_n(C,R,I,d)(y) -\expand{\tau_n(\text{Tor}^R_1(C, \frac{R}{\bp{I}{n}R})}{d}\\
&-\expand{\phi_{1,n}(\frac{K}{\bp{I}{n}K})}{d} \ .
\end{split}
\end{equation}
By \Cref{support}, one can choose a positive integer $C_1$ such that given any $n$ and all $m$ such that $|m| >C_1p^n$,
$$(\frac{C}{\bp{I}{n}C})_m= (\text{Tor}^R_1(C, \frac{R}{\bp{I}{n}R}))_m= (\frac{K}{\bp{I}{n}K})_m=0 .$$
Since $A$ is compact, there is a constant $C_2$ such that for all $j$, where $|j| \leq C_1p^n$ and for $y \in A$, $|e^{-iyj/p^n}| \leq C_2$- the argument is similar to that in \eqref{bound on the sequence}.
Using \eqref{eq1}, we conclude that  for $y \in A$,
\begin{equation*}
\begin{split}
  & |F_n(M_2,R,I,d)(y)- F_n(M_1, R,I, d)(y)|\\
  &\leq (\frac{1}{p^n})^d\underset{|j| \leq C_1p^n}{\sum}[\lambda((\frac{C}{\bp{I}{n}C})_j)+ \lambda((\text{Tor}^R_1(C, \frac{R}{\bp{I}{n}R}))_j)+ \lambda((\frac{K}{\bp{I}{n}K})_j)]|e^{-iyj/p^n}|\\
  &\leq C_2(\frac{1}{p^n})^d[\lambda(\frac{C}{\bp{I}{n}C})+ \lambda(\text{Tor}^R_1(C, \frac{R}{\bp{I}{n}R}))+ \lambda(\frac{K}{\bp{I}{n}K})] \ .
\end{split}
\end{equation*}
Since both $\text{dim}(C)$ and $\text{dim}(K)$ are less than $d$, the desired result follows from \Cref{bounded growth}.
\end{proof}
\vspace{.5cm}
\noindent Recall that for an integer $h$, $M(h)$ denotes the $R$-module $M$ but with a different $\Z$-grading: the $n$-th graded piece of $M(h)$ is $M_{n+h}$. From now on, we use the terminology set in the next definition.
\begin{definition}
Whenever the sequence of complex numbers $(F_n(M,R,I,d)(y))_n$ (see \eqref{the sequence}) converges, we set
 $$F(M,R,I,d)(y)= \underset{n \to \infty}{\lim}F_n(M,R,I,d)(y).$$
In the case $d= \text{dim}(M)$, we set $F(M,R,I)(y)= F(M,R,I,\text{dim}(M))(y)$. Analogously we use $F(M)(y)$ when $R,I$ are clear from the context.
\end{definition}
\begin{proposition}\label{invariance after a shift}
Let $R$ be a finitely generated $\N$-graded $k$-algebra, $I$ be a homogeneous ideal of finite co-length and $M$ be a finitely generated $\Z$-graded $R$-module. Fix an integer $h$.
\begin{enumerate}
    \item Given a compact subset $A\subseteq \mathbb{C}$, there exists a constant $D$ such that for all $y \in A$ and all $n$,
    $$|F_n(M(h), R,I,d)(y)-F_n(M,R,I,d)(y)| \leq \frac{D}{p^n}|F_n(M,R,I,d)(y)| .$$
   \item For any complex number $y$, $(F_n(M,R,I,d)(y))_n$ converges if and only if\\ $(F_n(M(h), R,I,d)(y))_n$ converges. When both of these converge, their limits are equal.
\end{enumerate}\end{proposition}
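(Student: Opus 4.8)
The plan is to show that passing from $M$ to the shifted module $M(h)$ multiplies the $n$-th approximant $F_n(M,R,I,d)(y)$ by the single scalar $e^{iyh/p^n}$, and then to control that scalar with \Cref{the essential limit}.

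The first step is the elementary observation that the $R$-module structure of $M$, and hence the submodule $\bp{I}{n}M$, does not see the internal $\Z$-grading; consequently there is an isomorphism of $\Z$-graded modules $\frac{M(h)}{\bp{I}{n}M(h)} \cong \bigl(\frac{M}{\bp{I}{n}M}\bigr)(h)$, that is, $\bigl(\frac{M(h)}{\bp{I}{n}M(h)}\bigr)_j = \bigl(\frac{M}{\bp{I}{n}M}\bigr)_{j+h}$ for every $j \in \Z$. Feeding this into the defining formula \eqref{the sequence} for $F_n$ and reindexing the sum (which is finite, by \Cref{support}) via $j \mapsto j+h$, I obtain the identity
$$F_n(M(h),R,I,d)(y) = e^{iyh/p^n}\,F_n(M,R,I,d)(y)$$
for every $y \in \mathbb{C}$ and every $n$, and hence
$$F_n(M(h),R,I,d)(y) - F_n(M,R,I,d)(y) = \bigl(e^{iyh/p^n}-1\bigr)\,F_n(M,R,I,d)(y).$$

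For assertion (1), I would fix a compact set $A \subseteq \mathbb{C}$ and bound $|e^{iyh/p^n}-1|$ on $A$. Applying \Cref{the essential limit} with $a = -h$, the entire functions $p^n\bigl(1-e^{iyh/p^n}\bigr)$ converge uniformly on $A$ to $-hiy$ as $n \to \infty$; being a uniformly convergent sequence of continuous functions on a compact set, they are uniformly bounded there, say by a constant $D$, which gives $|e^{iyh/p^n}-1| \le D/p^n$ for all $y \in A$ and all $n$. Substituting into the displayed identity proves (1). (Alternatively, $|e^{iyh/p^n}-1|$ can be bounded directly from the power series of the exponential at the origin.) Assertion (2) then follows at once: since $e^{\pm iyh/p^n} \to 1$ as $n \to \infty$, the identity $F_n(M(h))(y) = e^{iyh/p^n}F_n(M)(y)$ shows that $(F_n(M,R,I,d)(y))_n$ converges if and only if $(F_n(M(h),R,I,d)(y))_n$ does, and that in that case the two limits coincide (multiply, resp. divide, through by $e^{\mp iyh/p^n}$).

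I do not anticipate a genuine obstacle in this proposition; the work is essentially the bookkeeping of the grading shift together with the routine estimate supplied by \Cref{the essential limit}. The only point worth flagging is that (1) is deliberately stated with the factor $|F_n(M,R,I,d)(y)|$ on the right-hand side rather than an absolute constant, which is exactly what makes it valid for all $d$ (including $d < \text{dim}(M)$) without invoking the a priori bound of \Cref{FP functions are bounded on a compact subset}.
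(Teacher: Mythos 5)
Your proposal is correct and follows essentially the same route as the paper: both derive the identity $F_n(M(h),R,I,d)(y)=e^{iyh/p^n}F_n(M,R,I,d)(y)$ by reindexing the graded pieces and then bound $|e^{iyh/p^n}-1|$ by $D/p^n$ on a compact set via \Cref{the essential limit}. The only cosmetic difference is in part (2), where the paper gets the converse direction by applying (1) to $M\cong (M(h))(-h)$ while you simply divide by the scalar $e^{iyh/p^n}$; these are the same observation.
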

\begin{proof}
(1) Note that
$$F_n(M(h),R,I,d)(y)=(\frac{1}{p^n})^d \sum \limits_{j=- \infty}^{\infty}\lambda((\frac{M}{\bp{I}{n}M})_{j+h})e^{-iy(j+h)/p^n}e^{iyh/p^n}= e^{iyh/p^n}F_n(M,R,I,d)(y).$$
Thus,
$$|F_n(M(h), R,I,d)(y)-F_n(M,R,I,d)(y)|= |e^{iyh/p^n}-1||F_n(M,R,I,d)(y)|.$$
Since $A$ is bounded, it follows from \Cref{the essential limit}, there is a constant $D$ such that for $y \in A$, $|1-e^{-ihy/p^n}| \leq \frac{D}{p^n}$.\\\\
(2) It follows from the first assertion that whenever $(F_n(M,d)(y))_n$ converges, the sequence $F_n(M(h), d)(y)$ also converges and the two limits coincide. The other direction follows from the observation that as a graded module $M$ is isomorphic to $(M(h))(-h)$.
\end{proof}
\begin{lemma}\label{restriction of scalars}
Let $R \rightarrow S$ be a degree preserving finite  homomorphism of finitely generated $\N$-graded $k$-algebras. For any finitely generated $\Z$-graded $S$-module $N$ and any complex number $y$, $(F_n(N, R,I,d)(y))_n$ converges if and only if $(F_n(N, S, IS, d)(y))_n$ converges. When both of these converge $F(N,R,I,d)(y)= F(N,S,IS,d)(y)$.
\end{lemma}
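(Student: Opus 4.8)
The plan is to prove the stronger, term-by-term statement that $F_n(N,R,I,d) = F_n(N,S,IS,d)$ as entire functions for every $n$; both assertions of the lemma then follow at once, since the two sequences of functions whose convergence is in question become literally identical.

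The key point is the identity of submodules $\bp{I}{n}N = (IS)^{[p^n]}N$ inside $N$. First I would record the ideal-theoretic fact $(IS)^{[p^n]} = \bp{I}{n}S$: writing a typical element of $IS$ as $\sum_j \bar g_j s_j$ with $g_j \in I$ and $s_j \in S$, additivity of the $p^n$-th power map in characteristic $p$ gives $(\sum_j \bar g_j s_j)^{p^n} = \sum_j \overline{g_j^{p^n}}\, s_j^{p^n} \in \bp{I}{n}S$, and the reverse inclusion is immediate. Since $N$ is an $S$-module, it follows that $(IS)^{[p^n]}N$, the $S$-submodule generated by $(IS)^{[p^n]}\cdot N$, coincides with $\{\sum_j \bar g_j n_j : g_j \in \bp{I}{n},\ n_j \in N\}$, which is precisely the submodule $\bp{I}{n}N$ defined with respect to the $R$-module structure on $N$ obtained by restriction of scalars along $R \to S$ (here one uses that $\bp{I}{n}$ is an ideal of $R$, so $R$-coefficients can be absorbed). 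Thus $N/\bp{I}{n}N$ and $N/(IS)^{[p^n]}N$ are the \emph{same} quotient group.

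Next I would note that the $\Z$-gradings on $N$ coming from its $R$-module and $S$-module structures agree, because $R \to S$ is degree preserving and $N$ is graded over $S$; hence for each $j$ the graded piece $(N/\bp{I}{n}N)_j = (N/(IS)^{[p^n]}N)_j$ is one and the same $k$-vector space. Since the coefficients appearing in the defining formula \eqref{the sequence} for $F_n$ are the $k$-dimensions $\lambda\bigl((N/\bp{I}{n}N)_j\bigr)$, we conclude $F_n(N,R,I,d)(y) = F_n(N,S,IS,d)(y)$ for every $n$ and every $y \in \mathbb{C}$. Both sides are well defined: $S/IS$ is a finite $R/I$-module, hence $IS$ has finite co-length in $S$ and $N/(IS)^{[p^n]}N$ has only finitely many nonzero, finite-dimensional graded pieces, exactly as on the $R$-side. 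Therefore $(F_n(N,R,I,d)(y))_n$ converges iff $(F_n(N,S,IS,d)(y))_n$ converges, with equal limits when they do. I do not anticipate a genuine obstacle; the only step deserving care is the identification $\bp{I}{n}N = (IS)^{[p^n]}N$ as submodules of $N$, i.e. checking that pushing $N$ through the finite map $R \to S$ leaves the relevant quotients untouched, and that is the computation I would write out in full.
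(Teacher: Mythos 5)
Your argument is correct and is essentially the same as the paper's: both establish the term-by-term identity $F_n(N,R,I,d)(y) = F_n(N,S,IS,d)(y)$ for every $n$ and $y$ by observing that $\bp{I}{n}N$ and $(IS)^{[p^n]}N$ are the same submodule of $N$ and that the gradings agree, so the quotients have identical graded pieces. The paper states this more tersely, while you spell out the ideal-theoretic identity $(IS)^{[p^n]}=\bp{I}{n}S$, but the content is identical.
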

\begin{proof}
Since the $R$-module structure on $N$ comes via the restriction of scalars, for each $n$, the two $k$-vector spaces $(\frac{N}{\bp{I}{n}N})_j$ and $(\frac{N}{\bp{(IS)}{n}N})_j$ are isomorphic. Thus $F_n(N,R,I,d)(y)= F_n(N, S, IS, d)(y)$ and the conclusion follows.
\end{proof}
Note that, for a finitely generated $\N$-graded $k$-algebra $R$, $R^{p^e}= \{r^{p^e} \, | \, r \in R\}$ is an $\N$-graded subring of $R$- the $\N$-grading on $R^{p^e}$ will refer to this grading.

\begin{proposition} \label{base change}
Let $k$ be a field of characteristic $p>0$ such that  $k^p \subseteq k$  is finite. Let $R$ be a finitely generated $\N$-graded $k$-algebra, $I$ be a homogeneous ideal of finite co-length and  $M$ be a finitely generated $\Z$-graded $R$-module. Given two non-negative integers $d,m$ and a complex number $y$, 

\begin{enumerate}
    \item Denote the image of $I$ in $R^{p^m}$ under the $p^m$-th power map by $IR^{p^m}$. Then
   \begin{equation*}
        \begin{split}
             F_{n+m}(M,R,I,d)(y)&= \frac{1}{p^{md}[k:k^{p^m}]} F_n(M,R^{p^m},IR^{p^m},d)(y/p^m) \ .
        \end{split}
    \end{equation*}

    \item When $(F_n(M,R,I,d)(y))_n$ converges, $$F(M,R,I,d)(y)= \frac{1}{p^{md}[k:k^{p^m}]}F(M, R^{p^m}, IR^{p^m}, d)(y/p^m).$$
    
    \item If $R$ is reduced, for all $n$ $$F_n(R^p, R^p, IR^p,d)(y/p)= F_n(R,R,I,d)(y).$$ 
\end{enumerate}
\end{proposition}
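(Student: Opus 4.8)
The plan is to reduce all three claims to exact per-$n$ comparisons between the two sequences of normalized Hilbert series, keeping careful track of which base field the lengths are computed over and of how the internal grading gets rescaled; there is no analytic content beyond this.

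For (1) and (2), the first step is to rewrite the module appearing in $F_n(M,R^{p^m},IR^{p^m},d)$. Since $IR^{p^m}=F_R^m(I)$ and we are in characteristic $p$, for $r_i\in R^{p^m}$ and $f_i\in I$ we have $\big(\sum_i r_if_i^{p^m}\big)^{p^n}=\sum_i r_i^{p^n}f_i^{p^{m+n}}$, so $\bp{(IR^{p^m})}{n}$ is the ideal of $R^{p^m}$ generated by $\{f^{p^{m+n}}:f\in I\}$. These generators already lie in $R^{p^m}$, and for an ideal $J=(g_1,\dots,g_r)$ and a module $N$ one has $JN=\sum_i g_iN$; hence $\bp{(IR^{p^m})}{n}M$ and $\bp{I}{m+n}M$ both equal $\sum_{f\in I}f^{p^{m+n}}M$ inside $M$. (The hypothesis that $k^p\subseteq k$ is finite enters here: it forces $R$ to be module-finite over $R^{p^m}$, so $M$ and this quotient are finitely generated over $R^{p^m}$ and $F_n(M,R^{p^m},IR^{p^m},d)$ is defined.) Therefore $\frac{M}{\bp{(IR^{p^m})}{n}M}$ and $\frac{M}{\bp{I}{m+n}M}$ are literally the same $\Z$-graded abelian group; the only change is that in the former each homogeneous piece is counted by $\dim_{k^{p^m}}$, and $\dim_{k^{p^m}}V=[k:k^{p^m}]\,\dim_k V$ for every finite-dimensional $k$-vector space $V$. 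Since $e^{-i(y/p^m)j/p^n}=e^{-iyj/p^{m+n}}$ and $(1/p^n)^d=p^{md}(1/p^{m+n})^d$, substituting gives
\[
F_n\big(M,R^{p^m},IR^{p^m},d\big)\!\left(\tfrac{y}{p^m}\right)=p^{md}\,[k:k^{p^m}]\;F_{m+n}(M,R,I,d)(y)
\]
for every $n$. As the scalar $p^{md}[k:k^{p^m}]$ is nonzero and $\big(F_{m+n}(M,R,I,d)(y)\big)_n$ is a tail of $\big(F_n(M,R,I,d)(y)\big)_n$, the two sequences converge or diverge together, which is (1); and letting $n\to\infty$ gives $F(M,R^{p^m},IR^{p^m},d)(y/p^m)=p^{md}[k:k^{p^m}]F(M,R,I,d)(y)$, equivalently (2).

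For (3) the module is different: it is $\frac{R^p}{\bp{(IR^p)}{n}R^p}$ regarded over $R^p$, not $R$ regarded over $R^p$, so (3) is not an instance of (1)--(2). Here I would use transport of structure along the Frobenius. When $R$ is reduced, $F_R\colon R\to R^p$ is an isomorphism of rings restricting to the field isomorphism $F_k\colon k\to k^p$; it carries $\bp{I}{n}R$ onto $\bp{(IR^p)}{n}R^p$ and so induces a bijection $\frac{R}{\bp{I}{n}R}\to\frac{R^p}{\bp{(IR^p)}{n}R^p}$ that is additive, satisfies $\overline{F_R}(c\bar v)=c^p\,\overline{F_R}(\bar v)$ for $c\in k$, and maps the degree-$a$ component onto the degree-$pa$ component. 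Because $F_k$ is a \emph{bijection} of fields, a $k$-basis of the degree-$a$ component is sent to a $k^p$-basis of the degree-$pa$ component — this is precisely where ``reduced'', hence ``$F_R$ bijective'', is needed, and it is why no factor $[k:k^p]$ appears this time — so $\dim_{k^p}\big((\tfrac{R^p}{\bp{(IR^p)}{n}R^p})_{pa}\big)=\dim_k\big((\tfrac{R}{\bp{I}{n}R})_a\big)$. Finally $e^{-i(y/p)(pa)/p^n}=e^{-iya/p^n}$, so the degree rescaling exactly cancels the substitution $y\mapsto y/p$, and $F_n(R^p,R^p,IR^p,d)(y/p)=F_n(R,R,I,d)(y)$ for all $n$.

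The delicate point — the thing to watch while writing out the details — is keeping three effects apart: passing to the subfield $k^{p^m}$ multiplies every relevant dimension by $[k:k^{p^m}]$; transporting structure along the field \emph{isomorphism} $F_k$ multiplies nothing; and rescaling the internal grading by $p^m$ is absorbed by the substitution $y\mapsto y/p^m$. Their interaction is what produces the constant $p^{md}[k:k^{p^m}]$ in (1)--(2) and the constant $1$ in (3). I do not anticipate any real obstacle beyond this bookkeeping: all the convergence statements are formal consequences of the per-$n$ identities, since the compared sequences differ only by a fixed nonzero scalar and a shift of index.
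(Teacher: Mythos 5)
Your argument is correct and follows the same route as the paper: you establish the per-$n$ identity $F_n(M,R^{p^m},IR^{p^m},d)(y/p^m)=p^{md}[k:k^{p^m}]F_{n+m}(M,R,I,d)(y)$ by identifying $\bp{(IR^{p^m})}{n}M$ with $\bp{I}{m+n}M$ and converting $\lambda_{k^{p^m}}$ to $\lambda_k$, and then deduce (1) and (2) from it; for (3) you transport along the graded ring isomorphism $F_R\colon R\to R^p$ available when $R$ is reduced, matching the degree-$j$ piece with the degree-$pj$ piece. This is exactly the paper's calculation, just written out with a bit more justification for the equality $\bp{(IR^{p^m})}{n}M=\bp{I}{m+n}M$ and for the role of the finiteness of $k^p\subseteq k$.
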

\begin{proof}
Given $n \in \N$,

\begin{equation*}
\begin{split}
F_n(M,R^{p^m},IR^{p^m},d)(y/p^m) & =(\frac{1}{p^n})^d \sum \limits_{j=-\infty}^{\infty}\lambda_{k^{p^m}}((\frac{M}{\bp{(IR^{p^m})}{n}M})_j)e^{-iyj/p^{n+m}}\\
& =(\frac{1}{p^n})^d \sum \limits_{j=-\infty}^{\infty}\lambda_{k^{p^m}}((\frac{M}{\bp{I}{n+m}M})_j)e^{-iyj/p^{n+m}}\\
& = p^{md}[k:k^{p^m}] (\frac{1}{p^{n+m}})^d \sum \limits_{j=-\infty}^{\infty}\lambda_k((\frac{M}{\bp{I}{n+m}M})_j)e^{-iyj/p^{n+m}}\\
& = p^{md}[k:k^{p^m}]F_{n+m}(M,R,I,d)(y) \ .
\end{split}
\end{equation*}

(1) and (2) follows directly from the calculation  above.\\\\
Now, we verify (3). Since $R$ is reduced, the Frobenius $F_R: R \rightarrow R$ induces an isomorphism onto 
$R^p$; it takes $R_j$ to $(R^p)_{jp}$. Thus for each $n,j \in \N$, $F_R$ induces an isomorphism of abelian groups from $(\frac{R}{\bp{I}{n}})_j$ to $(\frac{R^p}{\bp{(IR^p)}{n}R^p})_{jp}$.
So, $\lambda_k((\frac{R}{\bp{I}{n}})_j)= \lambda_{k^p}((\frac{R^p}{\bp{(IR^p)}{n}R^p})_{jp})$. Now,

\begin{equation*}
\begin{split}
    F_n(R^p, R^p, IR^p,d)(y/p)
    &= (\frac{1}{p^n})^d\sum \limits_{j=0}^{\infty}\lambda_{k^p}((\frac{R^p}{\bp{(IR^p)}{n}R^p})_{jp})e^{-iyj/p^n}\\
    &= (\frac{1}{p^n})^d\sum \limits_{j=0}^{\infty}\lambda_k((\frac{R}{\bp{I}{n}R})_j)e^{-iyj/p^n}.
\end{split}
\end{equation*}

\noindent The rightmost quantity on the above equality is $F_n(R,R,IR,d)(y)$.
\end{proof}
\begin{theorem}\label{reduction to domain}
Let $R$ be a finitely generated $\N$-graded $k$-algebra, $I$ be a homogeneous ideal of finite co-length. Let $M$ be a finitely generated $\Z$-graded $R$-module of Krull dimension $d$ and $Q_1, \ldots, Q_l$ be the $d$ dimensional minimal prime ideals in the support of $M$. Let $m$ be such that $\bp{\text{nil}(R)}{m}$ is zero, where $\text{nil}(R)$ is the nilradical of $R$.

\begin{enumerate}
    \item Given 
a compact subset $A \subseteq \mathbb C$, there exists a constant $D$ such that for all $y \in A$, 

\begin{equation*}
    |F_{n+m}(M,R,I,d)(y)- \sum \limits_{j=1}^{l}\lambda_{R_{Q_j}}(M_{Q_j})F_n(\frac{R}{Q_j}, \frac{R}{Q_j}, I\frac{R}{Q_j},d)(y)| \leq \frac{D}{p^n} \ .
\end{equation*}

    \item Given $y \in \mathbb C$, whenever $(F_n(\frac{R}{Q_j}, \frac{R}{Q_j}, I\frac{R}{Q_j},d)(y))_n$ is convergent for every $j$, $(F_n(M, R,I,d)(y))_n$ is also convergent and

    \begin{equation}\label{the equality in localization formula}
F(M,R,I,d)(y)= \sum \limits_{j=1}^{l}\lambda_{R_{Q_j}}(M_{Q_j})F(\frac{R}{Q_j}, \frac{R}{Q_j}, I\frac{R}{Q_j},d)(y) \ .
\end{equation}
\end{enumerate}
\end{theorem}

To prove \ref{reduction to domain}, we first establish several lemmas to handle the reduced case. The main algebraic input into the proof is the following.
\begin{lemma}\label{for one prime}
Let $R$ be a reduced finitely generated $\N$-graded $k$-algebra and let $Q$ be a minimal prime ideal of $R$. Let $U$ be the multiplicative set of homogeneous elements in $R-Q$. Then,
\begin{enumerate}
    \item The ideal $QU^{-1}R$ is zero. Moreover, there is a field $k$ such that $U^{-1}R$ is isomorphic to either $k$ or $k[t,t^{-1}]$, where $t$ is an indeterminate over $k$. 
    \item Set $r= \lambda_{R_Q}(N_Q)$. Then there exist integers $h_1, \ldots, h_r$ and a grading preserving $R$-linear morphism
    $$\phi_Q: \bigoplus \limits_{j=1}^{r}\frac{R}{Q}(-h_j) \longrightarrow N,$$
    such that the map induced by $\phi_Q$ after localizing at $Q$ is an isomorphism.
\end{enumerate}
\end{lemma}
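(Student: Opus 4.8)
The strategy is to push everything down to the localization at $Q$, where, because $R$ is reduced and $Q$ is minimal, the ring $R_Q$ collapses to the residue field $\kappa(Q):=R_Q/QR_Q$, so that both assertions become elementary linear algebra. Throughout I will use that the minimal primes of the $\N$-graded ring $R$ are homogeneous, that an intersection of homogeneous ideals is homogeneous, and that a homogeneous ideal not contained in a prime $P$ contains a homogeneous element outside $P$; here $N$ denotes a finitely generated $\Z$-graded $R$-module, as in the application \Cref{reduction to domain}.

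For assertion (1), list the minimal primes of $R$ as $Q=Q_1,Q_2,\dots,Q_l$. If $l=1$ then $Q=\sqrt{(0)}=(0)$ since $R$ is reduced, and the claim is trivial, so assume $l\ge 2$; then $Q_1,\dots,Q_l$ are pairwise incomparable. Let $q\in Q$ be homogeneous. Since $R$ is reduced, $Q_1\cap\cdots\cap Q_l=(0)$, whence $q\cdot(Q_2\cap\cdots\cap Q_l)\subseteq Q_1\cap\cdots\cap Q_l=(0)$, i.e. $Q_2\cap\cdots\cap Q_l\subseteq \text{Ann}_R(q)$. Moreover $Q_2\cap\cdots\cap Q_l\not\subseteq Q_1$: otherwise the prime $Q_1$ would contain the product $Q_2\cdots Q_l$ and hence some $Q_i$ with $i\ge 2$, contradicting incomparability. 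As $Q_2\cap\cdots\cap Q_l$ is a homogeneous ideal not inside $Q_1$, it contains a homogeneous $u\notin Q$; then $u\in U$ and $uq=0$, so $q$ dies in $U^{-1}R$. Since $Q$ is generated by its homogeneous elements, $QU^{-1}R=0$, hence $U^{-1}R\cong U^{-1}(R/Q)$, which is a localization of the domain $R/Q$ at a set of nonzero elements and therefore a domain.

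For assertion (2), note that $R_Q$ is a zero-dimensional reduced local ring, hence the field $\kappa(Q)$, so that $QR_Q=0$ and $N_Q$ is a $\kappa(Q)$-vector space of dimension $r=\lambda_{R_Q}(N_Q)$. Consider the $R$-submodule $N':=(0:_N Q)$; it is graded because $Q$ is homogeneous, and since $Q$ is finitely generated, $N'_Q=(0:_{N_Q}QR_Q)=N_Q$. Choose finitely many homogeneous generators of the finitely generated module $N'$; their images span $N_Q$ over $R_Q=\kappa(Q)$, so $r$ of them — say $n_1,\dots,n_r$ of degrees $h_1,\dots,h_r$ — map to a $\kappa(Q)$-basis of $N_Q$. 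Define the grading-preserving $R$-linear map $\phi_Q\colon \bigoplus_{j=1}^r \frac{R}{Q}(-h_j)\to N$ by sending the degree-$h_j$ generator of the $j$-th summand to $n_j$; this is well defined exactly because each $n_j$ lies in $N'=(0:_N Q)$. Localizing at $Q$, the source becomes $\kappa(Q)^r$ and $(\phi_Q)_Q$ carries the standard basis to the basis $n_1,\dots,n_r$ of $N_Q$, so it is an isomorphism; since localization is exact, $(\ker\phi_Q)_Q=\ker\big((\phi_Q)_Q\big)=0$ and $(\text{coker}\,\phi_Q)_Q=\text{coker}\big((\phi_Q)_Q\big)=0$.

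I do not anticipate a genuine obstacle here: the content is the familiar statement that a finitely generated module is generically free along each irreducible component of its support, in a graded, reduced-ring guise. The only points that need care are the homogeneity bookkeeping — ensuring that the homogeneous elements $u$ and $n_1,\dots,n_r$ with the stated properties really exist — and the identification $N'_Q=N_Q$, which rests entirely on $QR_Q=0$.
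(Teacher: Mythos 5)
Your proof is correct, and the underlying ideas are the same as the paper's, but the implementations differ enough to be worth comparing.

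For part (1), the paper shows that $QU^{-1}R$ is the \emph{unique} minimal prime of $U^{-1}R$ (every minimal prime of the $\Z$-graded ring $U^{-1}R$ is homogeneous, and its homogeneous contraction to $R$ is a minimal prime contained in $Q$, hence equal to $Q$), and then invokes reducedness to conclude $QU^{-1}R=(0)$. You instead exhibit an explicit homogeneous $u\in Q_2\cap\cdots\cap Q_l$ not in $Q=Q_1$, using the primary decomposition $(0)=Q_1\cap\cdots\cap Q_l$ and incomparability of minimal primes, so that $uQ=0$. Your version is more hands-on and avoids invoking homogeneity of primes in the \emph{localized} ring; the paper's version is shorter once one accepts that black box. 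Your case split for $l=1$ is fine. Both are correct.

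For part (2) the two proofs are essentially the same: both reduce to finding homogeneous elements of $N$ that annihilate $Q$ and map to an $R_Q$-basis of $N_Q$. The paper gets such elements by starting from an arbitrary homogeneous basis-lifting set $m_1',\dots,m_r'$ and multiplying by a single homogeneous $s\in U$ with $sQ=0$ (whose existence is guaranteed by part (1)); you instead pass directly to the graded submodule $N'=(0:_N Q)$, check $N'_Q=N_Q$ using $QR_Q=0$ and finite generation of $Q$, and extract a homogeneous generating set. These are two packagings of the same idea, and both correctly deliver that $(\phi_Q)_Q$ is an isomorphism, hence $\ker\phi_Q$ and $\mathrm{coker}\,\phi_Q$ vanish after localizing at $Q$.
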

\begin{proof}
(1)Any non-zero homogeneous prime ideal of $U^{-1}R$ is the extension of a homogeneous prime ideal of $R$ contained in $R\setminus U$; so is contained in $Q$. As $Q$ is minimal, we conclude that $U^{-1}R$ has a unique prime ideal namely $QU^{-1}R$. Since $R$ is reduced, so is $U^{-1}R$. So $QU^{-1}R=0$. Since $U^{-1}R$ does not have any non-zero homogeneous prime ideal, every non-zero homogeneous element of $R$ is a unit. Therefore $U^{-1}R$ is isomorphic to either $k$ or $k[t, t^{-1}]$ for some field $k$; see \cite[Lemma 1.5.7]{BH}.\\\\
(2) Since $R$ is reduced and $Q$ is a minimal prime, $R_Q$ is a field. We produce $r$ homogeneous elements of $N$, each of which is annihilated by $Q$ and their images in $N_Q$ form an $R_Q$-basis of $N_Q$. For that, start with $r$ homogeneous elements $m_1', \ldots, m_r'$ such that $\{\frac{m_1'}{1}, \ldots, \frac{m_r'}{1}\}$ is an $R_Q$-basis of $N_Q$. Since by part (1) $QU^{-1}R$ is the zero ideal and $Q$ is finitely generated, we can pick an element $s$ in $U$ such that $s$ annihilates $Q$. Now set $m_j= sm_j'$ for each $j$. Each $m_j$ is annihilated by $Q$. Since $s$ is not in $Q$, the images of $m_1, \ldots, m_r$ in $N_Q$ form an $R_Q$-basis of $N_Q$. \\

Now, set $h_j= \text{deg}(m_j)$. Let 
$$\phi_Q: \bigoplus \limits_{j=1}^{r}\frac{R}{Q}(-h_j) \longrightarrow N$$
be the $R$-linear map sending $1 \in \frac{R}{Q}(-h_j)$ to $m_j$. Clearly $\phi_Q$ preserves gradings. Since the images of $m_1, \ldots, m_r$ form an $R_Q$-basis of $M_Q$, the map induced by $\phi_Q$ after localizing at $Q$ is an isomorphism, so our desired conclusion in \Cref{generic behaviour} follows.
\end{proof}

\begin{lemma}\label{generic behaviour}
Suppose that $R$ is reduced and let $P_1, P_2, \ldots, P_t$ be those among the minimal prime ideals of $R$ such that $\text{dim}(R) = \text{dim}(\frac{R}{P_j})$. Let $N$ be a finitely generated $\Z$-graded $R$-module. For each $j$, where $1 \leq j \leq t$, let $r_j = \lambda_{R_{P_j}}(N_{P_j})$. Then there exist integers $h_{j,n_j}$ where $1 \leq j \leq t, 1 \leq n_j \leq r_j$ and a degree preserving $R$-linear map,
$$\phi: \bigoplus \limits_{j=1}^{t} \bigoplus \limits_{n_j=1}^{r_j}\frac{R}{P_j}(-h_{j,n_j}) \longrightarrow N ,$$
such that the $\text{dim}_R(\text{ker}(\phi)) < \text{dim}(R)$, $\text{dim}_R(\text{coker}(\phi))<\text{dim}(R)$.
\end{lemma}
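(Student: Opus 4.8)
The plan is to glue together the maps furnished by \Cref{for one prime}, one for each of the maximal-dimensional minimal primes $P_1, \ldots, P_t$, and then to control $\text{ker}(\phi)$ and $\text{coker}(\phi)$ by testing at those primes.

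\textbf{Construction of $\phi$.} For each $j$ with $1 \le j \le t$, apply \Cref{for one prime} with $Q = P_j$: since $R$ is reduced and $P_j$ is a minimal prime, $R_{P_j}$ is a field and $r_j = \lambda_{R_{P_j}}(N_{P_j})$, so we obtain integers $h_{j,1}, \ldots, h_{j,r_j}$ together with a degree-preserving $R$-linear map
$$\phi_{P_j} \colon \bigoplus_{n_j=1}^{r_j} \frac{R}{P_j}(-h_{j,n_j}) \longrightarrow N$$
such that $\text{ker}(\phi_{P_j})$ and $\text{coker}(\phi_{P_j})$ both vanish after localizing at $P_j$; equivalently $(\phi_{P_j})_{P_j}$ is an isomorphism onto $N_{P_j}$. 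Now put $\phi = \sum_{j=1}^{t}\phi_{P_j}$, a degree-preserving $R$-linear map from $\bigoplus_{j=1}^{t}\bigoplus_{n_j=1}^{r_j}\frac{R}{P_j}(-h_{j,n_j})$ to $N$; this is the candidate claimed in \Cref{generic behaviour}.

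\textbf{Reduction to a statement at each $P_j$.} I would next invoke the standard fact that for a finitely generated $\Z$-graded $R$-module $X$ one has $\text{dim}_R(X) < \text{dim}(R)$ if and only if $X_{P_j} = 0$ for every $j$. Indeed, $\text{dim}_R(X) = \text{dim}(R/\text{Ann}(X))$ is the supremum of $\text{dim}(R/P)$ over $P \in \text{Supp}(X)$, and any prime $P$ with $\text{dim}(R/P) = \text{dim}(R)$ must be a minimal prime of $R$: if $P$ contains the minimal prime $P'$, the dimension formula for the finitely generated $k$-domain $R/P'$ gives $\text{dim}(R/P') = \text{dim}(R/P) + \text{ht}(P/P')$, whence $\text{ht}(P/P') = 0$ and $P = P'$; so $P \in \{P_1, \ldots, P_t\}$. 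Therefore it suffices to prove $\text{ker}(\phi)_{P_j} = 0$ and $\text{coker}(\phi)_{P_j} = 0$ for each $j$.

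\textbf{Localizing at $P_j$.} Fix $j$. For $i \ne j$ the distinct minimal primes $P_i, P_j$ satisfy $P_i \not\subseteq P_j$, so $(R/P_i)_{P_j} = 0$; hence all summands of the source of $\phi$ indexed by $i \ne j$ die upon localization at $P_j$, and $(\phi)_{P_j}$ is canonically identified with $(\phi_{P_j})_{P_j}$, which is an isomorphism by \Cref{for one prime}. Thus $\text{ker}(\phi)_{P_j} = \text{coker}(\phi)_{P_j} = 0$ for every $j$, and the desired inequalities $\text{dim}_R(\text{ker}(\phi)) < \text{dim}(R)$ and $\text{dim}_R(\text{coker}(\phi)) < \text{dim}(R)$ follow from the previous paragraph. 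The only genuine bookkeeping is that equivalence between "dimension $< \dim R$" and vanishing at the $P_j$, which relies on the catenarity and equidimensional-fiber properties of finitely generated $k$-algebras; granting \Cref{for one prime}, everything else is formal, so I do not expect a real obstacle here.
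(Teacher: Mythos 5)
Your proposal is correct and takes essentially the same approach as the paper: build $\phi$ by summing the maps $\phi_{P_j}$ from \Cref{for one prime}, observe that localizing at any fixed $P_j$ kills all the other summands (since distinct minimal primes are incomparable) so that $\phi_{P_j}$ becomes an isomorphism there, and conclude that the kernel and cokernel have support avoiding all maximal-dimensional minimal primes and hence dimension $< \dim R$. You supply a bit more detail on the final dimension step (via the dimension formula for finitely generated $k$-domains), which the paper states more tersely, but the argument is the same.
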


\begin{proof}
Consider for each $j$, $1 \leq j \leq t$, a $\phi_{P_j}$ as in assertion (2) of \Cref{for one prime}. Let $\phi$ be the map induced by these $\phi_{P_j}$'s. Since $P_1, \ldots, P_t$ are all distinct \textit{minimal} primes, after localizing at any $P_j$, the maps induced by $\phi$ and $\phi_{P_j}$ coincide. So the map induced by $\phi$ after localizing at each $P_j$ is an isomorphism. Hence, none of the supports of kernel and cokernel of $\phi$ include any of $P_1, \ldots, P_t$. Since $P_1, \ldots, P_t$ are precisely the minimal primes of $R$ of maximal dimension, Lemma \ref{generic behaviour} is proved.
\end{proof}

\begin{proof}[\textbf{Proof of Theorem \ref{reduction to domain}:}]
The second assertion follows from the first one; so we just prove the first assertion below.

 By \Cref{reduction to the F-finite case} we can assume that $k^p \subseteq k$ is a finite extension. Using \Cref{restriction of scalars} we can replace $(M,R,I)$ by $(M, \frac{R}{\text{Ann}(M)}, I\frac{R}{\text{Ann}(M)})$. So we assume that $d= \text{dim}(R)$.
 
 First we additionally assume that $R$ is reduced and show that taking $m=0$ works in assertion (1). By \Cref{restriction of scalars}, for all $j$ and $n$, $$F_n(\frac{R}{Q_j}, \frac{R}{Q_j}, I\frac{R}{Q_j}, d)(y)= F_n(\frac{R}{Q_j}, R, I, d)(y).$$
 Assertion (1) follows from direct applications of \Cref{generic behaviour}, assertion (1) of \Cref{vanishing} and assertion (1) of \Cref{invariance after a shift}. \\
 
We now prove assertion (1) of \Cref{reduction to domain} without assuming $R$ is reduced. We use the Frobenius endomorphism to pass to the reduced case. Pick an $m$ such that $\bp{\text{nil}(R)}{m}=0$. Then the kernel of the $m$-th iteration of the Frobenius $F_R^m: R \rightarrow R$ is $\text{nil}(R)$; thus $R^{p^m}$- the image of $F_R^m$- is reduced. Recall $R^{p^m}$ inherits the graded structure of $R$. The $d$-dimensional minimal primes of $R^{p^m}$ in the support of the $R^{p^m}$ module $M$ are precisely of $Q_1R^{p^m}, \ldots, Q_lR^{p^m}$ the respective images under the $p^m$-th power map. Since $R^{p^m}$ is reduced and $\frac{1}{p^m}A:= \{z/p^m \, | \, z \in A\}$ is compact, we can find a $D$ such that for each $y \in A$ and all $n$,

\begin{gather}\label{ga: after passing to the image via Frobenius iteration}
  | F_n(M,R^{p^m}, IR^{p^m})(y/p^m)
  - \sum \limits_{j=1}^l \lambda_{R^{p^m}_{Q_jR^{p^m}}}\left((M)_{Q_jR^{p^m}}\right)F_n(\frac{R^{p^m}}{Q_jR^{p^m}}, \frac{R^{p^m}}{Q_jR^{p^m}}, I\frac{R^{p^m}}{Q_jR^{p^m}})(y/p^m)|\\
  \leq \frac{D}{p^n} \ .  
\end{gather}

\noindent For each $j$, the graded ring $\frac{R^{p^m}}{Q_jR^{p^m}}$ is isomorphic to the graded subring $(\frac{R}{Q_j})^{p^m} \subset \frac{R}{Q_j}$; so for all $n$ and $y \in \mathbb{C}$,

\begin{equation*}
  F_n((\frac{R}{Q_j})^{p^m}, (\frac{R}{Q_j})^{p^m}, I(\frac{R}{Q_j})^{p^m}, d)(y)= F_n(\frac{R^{p^m}}{Q_jR^{p^m}}, \frac{R^{p^m}}{Q_jR^{p^m}}, I\frac{R^{p^m}}{Q_jR^{p^m}}, d)(y) \ .
\end{equation*}

Since $\frac{R}{Q_j}$ is reduced,
$$F_n((\frac{R}{Q_j})^{p^m}, (\frac{R}{Q_j})^{p^m}, I(\frac{R}{Q_j})^{p^m}, d)(y/p^m)= F_n(\frac{R}{Q_j}, \frac{R}{Q_j}, I\frac{R}{Q_j}, d)(y)$$
by \Cref{base change}, (3).

Using \Cref{base change}, assertion (1), we have

\begin{equation*}
   p^{md}[k:k^{p^m}] F_{n+m}(M,R,I,d)(y) = F_n(M,R^{p^m},IR^{p^m},d)(y/p^m) \ .
\end{equation*}

Since for each $j$, $\frac{R}{Q_j}$ has krull dimension $d$, 
$$\lambda_{R^{p^m}_{Q_jR^{p^m}}}((M)_{Q_jR^{p^m}})= [k:k^{p^m}]p^{dm} \lambda_{R_{Q_j}}(M_{Q_j})$$

So \Cref{ga: after passing to the image via Frobenius iteration} yields

$$|F_{n+m}(M,R,I,d)(y)- \sum \limits_{j=1}^{l}\lambda_{R_{Q_j}}(M_{Q_j})F_n(\frac{R}{Q_j}, \frac{R}{Q_j}, I\frac{R}{Q_j},d)(y)| \leq \frac{D}{p^{dm}[k:k^{p^m}]}\frac{1}{p^n},$$

proving assertion (1).

\end{proof}
\begin{proof}[\textbf{Proof of \Cref{the main theorem}:}] Using \Cref{reduction to the F-finite case} we can assume that $k^p \subseteq k$ is a finite extension. We argue that the sequence $(F_n(M,R,I, \text{dim}(M)))_n$ is uniformly Cauchy on every compact subset. By \Cref{reduction to domain}, assertion (1), we can assume that $R$ is a domain and $M=R$. Fix a compact subset $A \subseteq \mathbb{C}$. Since the torsion free rank of $R$ as an $R^p$ module is $p^d[k:k^p]$, we have an exact sequence of finitely generated graded $R^p$ modules (see \Cref{generic behaviour}):
$$\begin{tikzcd}
0 \arrow[r] & K \arrow[r] & \bigoplus\limits_{j=1}^{p^d[k:k^p]}R^p(h_j) \arrow[r] & R \arrow[r] & C \arrow[r] & 0
\end{tikzcd}$$
for some integers $h_i$ such that both $\text{dim}_{R^p}(K) \, \text{and} \, \text{dim}_{R^p}(C)$ are less than $d$. Hence there exist constants $D, D'$ such that for all $n$ and for any $y \in A$,
\begin{equation*}
\begin{split}
|F_{n+1}(R,R,I)(y)-F_n(R,R,I)(y)|
    & =|\frac{1}{p^d[k:k^p]}F_n(R,R^p,IR^p)(y/p)- F_n(R,R,I)(y)|\\
    & \leq |\frac{1}{p^d[k:k^p]}\sum \limits_{j=1}^{p^d[k:k^p]}F_n(R^p(h_j),R^p,IR^p)(y/p)\\
    &-F_n(R,R,I)(y)|+ \frac{D}{p^n}\\
    & \leq |F_n(R^p,R^p, IR^p)(y/p)-F_n(R,R,I)(y)|+\frac{D'}{p^n}+\frac{D}{p^n}\\
    & = \frac{D+D'}{p^n} \ .
\end{split}
\end{equation*}
The first equality comes from assertion 1 of \Cref{base change}. The first inequality is a consequence of assertion (1) of \Cref{vanishing}. The second inequality is obtained by applying assertion (1) of \Cref{invariance after a shift} and assertion (1) of \Cref{FP functions are bounded on a compact subset}. The last equality follows from \Cref{base change}, assertion (3).
Hence for $m,n \in \N$ and for any $y \in A$, $$|F_{n+m}(R,R,I)(y)-F_n(R,R,I)(y)| \leq (D+D')(\sum \limits_{j=n}^{\infty}\frac{1}{p^j})= \frac{D+D'}{p^n}\frac{p}{p-1} .$$
Thus the sequence of entire functions $(F_n(R,R,I)(y))_n$ is uniformly Cauchy on $A$.\\

A sequence of entire functions which is uniformly Cauchy on every compact subset of $\mathbb C$ converges to a entire function and the convergence is uniform on every compact subset; see Theorem 1 in Chapter 5 of \cite{Ahlfors}. This finishes the proof of \Cref{the main theorem}.
\end{proof} 
\section{Properties of Frobenius-Poincar\'e functions}
\noindent This section is devoted to developing general properties of Frobenius-Poincar\'e functions. Some of these are analogues of properties of Hilbert-Kunz multiplicities. In \Cref{FP function wrt a homogeneous SOP} and \Cref{FP function in dimension one}, we use these general properties to compute Frobenius-Poincar\'e functions in some special cases.
\begin{proposition}
 \label{power series expansion}
Let $M$ be a finitely generated $\Z$-graded $R$-module of Krull dimension $d$. Then the power series expansion of $F(M,R,I)(y)$ around the origin in the complex plane is given by
$$F(M,R,I)(y)= \sum \limits_{m= 0}^{\infty} a_m y^m ,$$
where for each $m$, 
$$a_m= (-i)^m\frac{1}{m!} \, \underset{n \to \infty}{\lim}(\frac{1}{p^n})^{d+m}\sum \limits_{j=- \infty}^{\infty}j^m \lambda{((\frac{M}{\bp{I}{n}M})_j)} .$$
\end{proposition}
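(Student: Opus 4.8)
The plan is to exploit the uniform convergence established in \Cref{the main theorem} together with the standard fact that a locally uniform limit of holomorphic functions may be differentiated term by term. Concretely, each $F_n(M,R,I,d)(y)$ is entire, and $F_n \to F(M,R,I)$ uniformly on every compact subset of $\mathbb{C}$; hence for every $m$ the $m$-th derivatives converge, $F_n^{(m)} \to F^{(m)}$ locally uniformly, and in particular $F_n^{(m)}(0) \to F^{(m)}(0)$. The Taylor coefficient $a_m$ of $F(M,R,I)$ at the origin is $F^{(m)}(0)/m!$, so it suffices to compute $\lim_{n\to\infty} F_n^{(m)}(0)/m!$ and to verify that this agrees with the claimed formula.

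First I would write $F_n(M,R,I,d)(y) = (\tfrac{1}{p^n})^d \sum_{j} \lambda((\tfrac{M}{\bp{I}{n}M})_j) e^{-iyj/p^n}$, a finite sum by \Cref{support}, so differentiation under the (finite) sum is unproblematic. Differentiating $m$ times and evaluating at $y=0$ gives
\begin{equation*}
F_n^{(m)}(0) = (\tfrac{1}{p^n})^d \sum_{j} \lambda\!\left((\tfrac{M}{\bp{I}{n}M})_j\right) \left(\tfrac{-ij}{p^n}\right)^{m} = (-i)^m (\tfrac{1}{p^n})^{d+m} \sum_{j} j^m \lambda\!\left((\tfrac{M}{\bp{I}{n}M})_j\right).
\end{equation*}
Dividing by $m!$ yields exactly the expression proposed for $a_m$, provided the limit exists. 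But the existence of $\lim_n F_n^{(m)}(0)$ is guaranteed by the locally uniform convergence $F_n \to F(M,R,I)$: I would cite Theorem 1 of Chapter 5 of \cite{Ahlfors} (or the Weierstrass convergence theorem) for the fact that the convergence of a locally uniformly convergent sequence of holomorphic functions passes to all derivatives, again locally uniformly; hence $F_n^{(m)}(0) \to F^{(m)}(0)$. Combining, $a_m = F^{(m)}(0)/m! = \lim_n F_n^{(m)}(0)/m!$, which is the claimed formula.

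Finally I would record that the power series $\sum_m a_m y^m$ converges to $F(M,R,I)(y)$ for all $y \in \mathbb{C}$ simply because $F(M,R,I)$ is entire (by \Cref{the main theorem}), so its Taylor series at the origin has infinite radius of convergence and represents it everywhere. I do not expect a serious obstacle here; the only point requiring minor care is justifying that term-by-term differentiation of the finite sum defining $F_n$ is legitimate (immediate, since the sum is finite once one invokes \Cref{support}) and that the interchange of the limit in $n$ with differentiation is exactly the content of the Weierstrass/Ahlfors theorem already used in the proof of \Cref{the main theorem}. Everything else is a routine rewriting of the $m$-th derivative of an exponential.
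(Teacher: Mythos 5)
Your proof is correct and takes essentially the same route as the paper: both invoke the fact that locally uniform convergence of holomorphic functions (from \Cref{the main theorem}) passes to all derivatives, then explicitly compute $F_n^{(m)}(0)$ from the finite sum defining $F_n$ and identify $a_m = F^{(m)}(0)/m!$. The only difference is cosmetic (the paper cites Lemma 3, Chapter 4 of \cite{Ahlfors} while you cite the Weierstrass convergence theorem / Chapter 5, Theorem 1 — equivalent tools).
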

\begin{proof}
Since the sequence $(F_n(M))_n$ converges uniformly to $F(M)$ on the closed unit disc around zero, it follows from Lemma 3, Chapter 4 of \cite{Ahlfors} that  for each $m$, the sequence $$\frac{d^m}{dy^m}(F_n)(0)=(-i)^m (\frac{1}{p^n})^{d+m}\sum \limits_{j= -\infty}^{\infty}j^m \lambda{((\frac{M}{\bp{I}{n}M})_j)}$$ converges to $\frac{d^m}{dy^m}(F)(0)$. Since $a_m= \frac{1}{m!}\frac{d^m}{dy^m}(F)(0)$, we get the result.
\end{proof}
\begin{corollary}\label{Hk multiplicity from the Frobenis-Poincare function}
The Hilbert-Kunz multiplicity of the triple $(M, R,I)$ is $F(M,R,I)(0)$.
\end{corollary}
\vspace{.5cm}
\noindent The next result provides a associativity formula for Frobenius-Poincar\'e functions.
\begin{theorem}\label{FP functions are described by generic ranks}
Let $M$ be a finitely generated $\Z$-graded $R$-module of Krull dimension $d$. Let $P_1, \ldots, P_t$ be the dimension $d$ minimal prime ideals in the support of $M$. Then
$$F(M,R,I,d)(y)= \sum \limits_{j=1}^{t}\lambda_{R_{P_j}}(M_{P_j})F(\frac{R}{P_j}, \frac{R}{P_j}, I\frac{R}{P_j},d)(y).$$
\end{theorem}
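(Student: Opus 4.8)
The plan is to obtain this statement as an immediate consequence of the conditional result \Cref{reduction to domain}, using \Cref{the main theorem} to discharge its hypothesis. The minimal primes $P_1,\dots,P_t$ of maximal dimension $d$ in the support of $M$ are exactly the ideals called $Q_1,\dots,Q_l$ in the statement of \Cref{reduction to domain}, so the only thing that needs to be checked is that all the Frobenius-Poincar\'e functions appearing on the right-hand side actually exist. To that end, I would first record that for each $j$ with $1 \le j \le t$ the triple $\bigl(\frac{R}{P_j},\frac{R}{P_j},I\frac{R}{P_j}\bigr)$ meets the hypotheses of \Cref{the main theorem}: the ring $\frac{R}{P_j}$ is a finitely generated $\N$-graded $k$-algebra, being a homogeneous quotient of $R$; the ideal $I\frac{R}{P_j}$ is homogeneous; and it has finite co-length, since $\frac{R/P_j}{I(R/P_j)}$ is a quotient of $R/I$, which has finite length by assumption. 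I would also note that, because $P_j$ is a $d$-dimensional minimal prime in $\text{Supp}(M)$, we have $\dim\bigl(\frac{R}{P_j}\bigr)=d$, so the fourth argument in $F\bigl(\frac{R}{P_j},\frac{R}{P_j},I\frac{R}{P_j},d\bigr)$ equals $\dim\bigl(\frac{R}{P_j}\bigr)$, i.e. this is genuinely the Frobenius-Poincar\'e function of that triple in the sense of \Cref{main definition}. Hence \Cref{the main theorem} guarantees that the sequence $\bigl(F_n\bigl(\frac{R}{P_j},\frac{R}{P_j},I\frac{R}{P_j},d\bigr)(y)\bigr)_n$ converges for every $y \in \mathbb{C}$ and every $j$.

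With this in hand, the hypothesis of \Cref{reduction to domain} holds for the triple $(M,R,I)$ at every complex number $y$, and I would simply invoke that theorem. It yields at once the convergence of $\bigl(F_n(M,R,I,d)(y)\bigr)_n$ — which is in any case guaranteed by \Cref{the main theorem} — together with the asserted identity
$$F(M,R,I,d)(y)= \sum_{j=1}^{t}\lambda_{R_{P_j}}(M_{P_j})\,F\!\left(\frac{R}{P_j},\frac{R}{P_j},I\frac{R}{P_j},d\right)(y).$$

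I do not expect a genuine obstacle here. The substantive content — reducing through an iterate of Frobenius to the reduced case, and then, via \Cref{generic behaviour} together with the comparison estimate \Cref{vanishing}, to the case of an $\N$-graded domain with $M=R$ — is already carried out in the proof of \Cref{reduction to domain}, while the unconditional existence of every Frobenius-Poincar\'e function in sight is precisely \Cref{the main theorem}. The only point deserving a moment's care is the bookkeeping around the dimension parameter: one must use that each $P_j$ has dimension exactly $d$, equal to $\dim(M)$, so that the formula legitimately relates $F(M,R,I)=F(M,R,I,d)$ to the honest Frobenius-Poincar\'e functions $F\bigl(\frac{R}{P_j},\frac{R}{P_j},I\frac{R}{P_j}\bigr)$ of the quotient domains, rather than to the zero functions one would get (by \Cref{FP functions are bounded on a compact subset}(2)) if a lower-dimensional prime were mistakenly included.
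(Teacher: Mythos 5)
Your proposal is correct and takes the same route as the paper, whose proof is the one-liner ``Follows from \Cref{reduction to domain}.'' You simply spell out the implicit step — that \Cref{the main theorem} discharges the convergence hypothesis of \Cref{reduction to domain} for each triple $\bigl(\frac{R}{P_j},\frac{R}{P_j},I\frac{R}{P_j}\bigr)$, together with the dimension bookkeeping — which is exactly what the paper leaves to the reader.
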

\begin{proof}
Follows from \Cref{reduction to domain}.
\end{proof}
As a consequence of \Cref{FP functions are described by generic ranks}, we prove that Frobenius-Poincar\'e functions are additive over a short exact sequence.
\begin{proposition}\label{additivity over short exact sequences}
Consider a short exact sequence of finitely generated $\Z$-graded $R$-modules where the boundary maps preserve gradings,
     $$\begin{tikzcd}
    0 \arrow[r] & M' \arrow[r] & M \arrow[r] & M'' \arrow[r] & 0 .
    \end{tikzcd}
$$ 
    Let $d$ be the Krull dimension of $M$. Then $F(M,R,I,d)= F(M',R,I,d)+ F(M'', R,I,d)$.
\end{proposition}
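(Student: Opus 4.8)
The plan is to deduce additivity directly from the localization formula \Cref{FP functions are described by generic ranks}, so that everything reduces to the fact that lengths of localizations are additive on short exact sequences (localization being exact). Set $d=\text{dim}(M)$, which is the parameter appearing in all three Frobenius--Poincar\'e functions in the statement. First I would record the relevant bookkeeping about top-dimensional primes: since $\text{dim}(M')\le d$ and $\text{dim}(M'')\le d$, a prime $P$ with $\text{dim}(R/P)=d$ lies in $\text{Supp}(M)$ (resp.\ $\text{Supp}(M')$, $\text{Supp}(M'')$) exactly when $M_P\neq 0$ (resp.\ $M'_P\neq 0$, $M''_P\neq 0$), and any such $P$ is automatically a minimal prime of maximal dimension in the corresponding support, because $R/\text{Ann}(N)$ is a catenary finitely generated $k$-algebra for $N\in\{M,M',M''\}$; consequently the relevant localization has finite length over $R_P$. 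Let $P_1,\dots,P_t$ be the finitely many primes with $\text{dim}(R/P_j)=d$ and $M_{P_j}\neq 0$. By exactness of localization, $M_{P_j}\neq 0$ iff $M'_{P_j}\neq 0$ or $M''_{P_j}\neq 0$, so $\{P_1,\dots,P_t\}$ is precisely the set of dimension-$d$ primes occurring in $\text{Supp}(M')\cup\text{Supp}(M'')$, and the lists of dimension-$d$ minimal primes of $\text{Supp}(M')$ and of $\text{Supp}(M'')$ are each sublists of $\{P_1,\dots,P_t\}$.

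Next I would apply \Cref{FP functions are described by generic ranks} to $M$ and, to each of $M'$ and $M''$, in the two possible cases: if the module has Krull dimension $d$ the theorem applies verbatim (with all Frobenius--Poincar\'e functions existing by \Cref{the main theorem}), while if it has Krull dimension strictly less than $d$ then by part (2) of \Cref{FP functions are bounded on a compact subset} the sequence $(F_n(\,\cdot\,,R,I,d))_n$ converges uniformly to the zero function and the corresponding sum over dimension-$d$ primes is empty, so the formula still holds trivially. In both cases one obtains, for $N\in\{M,M',M''\}$,
$$F(N,R,I,d)(y)=\sum_{j=1}^{t}\lambda_{R_{P_j}}(N_{P_j})\,F\!\left(\frac{R}{P_j},\,\frac{R}{P_j},\,I\frac{R}{P_j},\,d\right)(y),$$
where the indices $j$ with $N_{P_j}=0$ contribute zero terms; in particular all three Frobenius--Poincar\'e functions in the statement are defined.

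Finally, for each $j$ the localized sequence $0\to M'_{P_j}\to M_{P_j}\to M''_{P_j}\to 0$ is a short exact sequence of finite-length $R_{P_j}$-modules, whence $\lambda_{R_{P_j}}(M_{P_j})=\lambda_{R_{P_j}}(M'_{P_j})+\lambda_{R_{P_j}}(M''_{P_j})$. Adding the displayed formula for $M'$ to that for $M''$ and using this additivity of lengths term by term yields exactly the displayed formula for $M$, i.e.\ $F(M,R,I,d)=F(M',R,I,d)+F(M'',R,I,d)$. I do not anticipate a genuine obstacle; the only point requiring care is the dimension bookkeeping in the first paragraph --- ensuring that computing the Frobenius--Poincar\'e functions of $M'$ and $M''$ with the parameter $d=\text{dim}(M)$ rather than their intrinsic dimensions is harmless (handled by the vanishing in \Cref{FP functions are bounded on a compact subset}(2)) and that each $M'_{P_j}$, $M''_{P_j}$ has finite length (which holds because $P_j$ has maximal dimension in every support in which it appears).
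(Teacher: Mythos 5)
Your proof is correct and follows essentially the same route as the paper: additivity is derived from the localization formula in \Cref{FP functions are described by generic ranks} together with the fact that lengths of localizations at top-dimensional primes are additive on short exact sequences. The paper's proof is just a terse version of yours; the only thing you add is the explicit verification that when $\text{dim}(M')$ or $\text{dim}(M'')$ drops below $d$, the corresponding Frobenius--Poincar\'e function with parameter $d$ vanishes by \Cref{FP functions are bounded on a compact subset}(2) and the empty sum convention makes the formula consistent --- a detail the paper leaves implicit.
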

\begin{proof} The support of $M$ is the union of supports of $M'$ and $M''$. Since for a $d$ dimensional minimal prime $Q$ in the support of $M$, $\lambda_{R_Q}(M_Q)= \lambda_{R_Q}(M'_Q)+ \lambda_{R_Q}(M''_Q)$, the desired result follows from \Cref{FP functions are described by generic ranks}.
\end{proof}
In \Cref{FP function wrt a homogeneous SOP} we apply \Cref{FP functions are described by generic ranks} to compute the Frobenius-Poincar\'e function with respect to an ideal generated by a homogeneous system of parameters.
\begin{proposition}\label{FP function wrt a homogeneous SOP}
Let $R$ be an $\N$-graded, Noetherian ring such that $R_0=k$. Let $I$ be an ideal generated by a homogeneous system of parameters of degrees $\delta_1, \delta_2, \ldots, \delta_d$. Denote the Hilbert-Samuel multiplicity of $R$ by $e_R$- see \Cref{HS multiplicity}. Then
$$F(R,I)(y)= e_R\prod \limits_{j=1}^{d}(\frac{1-e^{-i \delta_j y}}{i y}).$$
\end{proposition}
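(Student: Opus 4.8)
The plan is to realize $R$ as a module over a graded polynomial subring generated by the given parameters, transport the computation of \Cref{multivariable polynomial ring} along this module-finite extension, and then identify the generic-rank factor that appears with $\delta_1\cdots\delta_d\, e_R$.

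Concretely, set $d=\dim R$, let $f_1,\dots,f_d$ be the given homogeneous system of parameters with $\deg f_j=\delta_j$ and $I=(f_1,\dots,f_d)$, and put $A=k[f_1,\dots,f_d]\subseteq R$ with its induced $\N$-grading. Since $R/IR$ has finite length and $R_0=k$, the graded Nakayama lemma shows $R$ is module-finite over $A$; hence $\dim A=\dim R=d$, and this forces the kernel of $k[T_1,\dots,T_d]\to A$ to be zero, so $A$ is a polynomial ring with $\deg f_j=\delta_j$ and $IA=(f_1,\dots,f_d)A$ is its homogeneous maximal ideal, with $A/IA=k$. Because $(IA)R=I$, \Cref{restriction of scalars} gives $F(R,R,I)(y)=F(R,A,IA,d)(y)$. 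Regarding $R$ as a finitely generated $\Z$-graded $A$-module of Krull dimension $d=\dim A$, and noting that $R$ is faithful over the domain $A$, so that its only $d$-dimensional minimal prime in $\text{Spec}\,A$ is $(0)$, the localization formula \Cref{FP functions are described by generic ranks} yields
\[
F(R,A,IA,d)(y)=\rho\cdot F(A,A,IA,d)(y),\qquad \rho:=\lambda_{A_{(0)}}(R_{(0)}).
\]
Finally, the computation in \Cref{multivariable polynomial ring}, which is valid over any field $k$ since the lengths occurring there are field-independent, gives $F(A,A,IA,d)(y)=\prod_{j=1}^{d}\frac{1-e^{-i\delta_j y}}{i\delta_j y}$.

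It then remains to prove $\rho=\delta_1\cdots\delta_d\, e_R$. For this I would apply \Cref{generic behaviour} to the domain $A$ and the $A$-module $R$: there is a degree-preserving $A$-linear map $\phi\colon\bigoplus_{n=1}^{\rho}A(-h_n)\to R$ whose kernel $K$ and cokernel $C$ have $A$-dimension strictly less than $d$. The Hilbert series is additive along the four-term exact sequence $0\to K\to\bigoplus_{n=1}^{\rho}A(-h_n)\to R\to C\to 0$, and by \Cref{rationality of Poincare series} the Hilbert series of $K$ and $C$ have a pole of order $<d$ at $t=1$, while $H_{A(-h_n)}(t)=t^{h_n}/\prod_{j=1}^{d}(1-t^{\delta_j})$; hence $\lim_{t\to 1}(1-t)^d H_R(t)=\rho/(\delta_1\cdots\delta_d)$. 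By \Cref{Samuel multiplicity in terms of Hilbert-Poincare series} this same limit equals $e_R$, so $\rho=\delta_1\cdots\delta_d\, e_R$. Assembling the three displayed identities gives
\[
F(R,I)(y)=\rho\prod_{j=1}^{d}\frac{1-e^{-i\delta_j y}}{i\delta_j y}=\delta_1\cdots\delta_d\, e_R\prod_{j=1}^{d}\frac{1-e^{-i\delta_j y}}{i\delta_j y}=e_R\prod_{j=1}^{d}\frac{1-e^{-i\delta_j y}}{iy},
\]
as claimed.

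The only points needing care are the two more elementary ones: verifying that $A$ is genuinely a graded polynomial subring over which $R$ is module-finite (so that \Cref{restriction of scalars} and \Cref{FP functions are described by generic ranks} apply), and the identification $\rho=\delta_1\cdots\delta_d\, e_R$. Neither is deep, but both must be carried out without assuming $R$ reduced or Cohen-Macaulay. As an alternative to the Hilbert-series computation of $\rho$, one may observe that $\rho$ is the Hilbert-Samuel multiplicity of $R$, viewed as an $A$-module, with respect to the parameter ideal $IA$; this agrees with the Hilbert-Samuel multiplicity of $R$ with respect to $I$, which by \Cref{HS multiplicity with respect to SOP} equals $\delta_1\cdots\delta_d\, e_R$.
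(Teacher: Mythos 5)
Your proof is correct and the backbone is the same as the paper's: both realize $R$ as module finite over the graded polynomial subring $A=k[f_1,\dots,f_d]$, pass the Frobenius--Poincar\'e function across this extension (\Cref{restriction of scalars}, \Cref{FP functions are described by generic ranks}), and invoke \Cref{multivariable polynomial ring} to get $F(R,I)(y)=\rho\prod_j\frac{1-e^{-i\delta_j y}}{i\delta_j y}$ with $\rho$ the generic $A$-rank of $R$. You differ in how you pin down $\rho$: the paper evaluates the identity at $y=0$ to see that $\rho$ is the Hilbert--Kunz multiplicity $e_{HK}(R,I)$ (using \Cref{power series expansion}), and then appeals to the external fact that Hilbert--Kunz and Hilbert--Samuel multiplicities agree for parameter ideals (Theorem 11.2.10 of Huneke--Swanson) together with \Cref{HS multiplicity with respect to SOP}. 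Your main argument instead reads off $\rho$ from a Hilbert-series computation: the four-term exact sequence from \Cref{generic behaviour}, the pole-order control in \Cref{rationality of Poincare series}, and \Cref{Samuel multiplicity in terms of Hilbert-Poincare series} give $\lim_{t\to 1}(1-t)^d H_R(t)=\rho/(\delta_1\cdots\delta_d)=e_R$. This is more self-contained than the paper's route in that it stays entirely within the elementary Hilbert-series toolkit of Section 2 and avoids the HK$=$HS theorem, which is a genuinely deeper input; the paper's route, by contrast, is shorter on the page but leans on that cited theorem. Your alternative route at the end (associativity of Hilbert--Samuel multiplicity over $A$) is closer in spirit to the paper but still bypasses Hilbert--Kunz multiplicity; it is fine, though it relies on the associativity formula for modules, which you do not prove and the paper does not state.
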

\begin{proof}
Suppose $f_1, \ldots, f_d$ be a homogeneous system of parameters of degrees $\delta_1, \ldots, \delta_d$ respectively, such that $I= (f_1, \ldots, f_d)$. Then the extension of rings $k[\underline{f}]:= k[f_1, \ldots, f_d] \hookrightarrow R$ is finite (see Theorem 1.5.17, \cite{BH}). Suppose that the generic rank of $R$ as an $k[\underline{f}]$ module is $r$. Since $k[\underline{f}]$ is isomorphic to the graded polynomial ring in $d$ variables where the degrees of the variables are $\delta_1, \ldots, \delta_d$, from \Cref{multivariable polynomial ring} and \Cref{FP functions are described by generic ranks}, we have
\begin{equation}\label{before finding r}
F(R,I)(y)= F(R, k[\underline{f}], (f_1, \ldots, f_d)k[\underline{f}])= r\prod \limits_{j=1}^{d}(\frac{1-e^{-i \delta_j y}}{i \delta_j y}) \ .
\end{equation}
Taking limit as $y$ tends to zero in \eqref{before finding r} and \cref{power series expansion}, we conclude that $r$ is the Hilbert-Kunz multiplicity of the pair $(R,I)$. The Hilbert-Kunz and the Hilbert-Samuel multiplicities are the same with respect to a given ideal generated by a system of parameters (see Theorem 11.2.10, \cite{HunekeSwanson}). So using \Cref{HS multiplicity with respect to SOP} we get that $r= \delta_1 \ldots \delta_j e_R$.
\end{proof}
Now we compute the Frobenius-Poincar\'e function of a one dimensional graded domain whose degree zero piece is an algebraically closed field. This will indeed allow us to compute the Frobenius-Poincar\'e function of any one dimensional graded ring by using \Cref{reduction to the F-finite case} and \Cref{FP functions are described by generic ranks}.
\begin{proposition}\label{FP function in dimension one}
Let $R$ be a one dimensional finitely generated $\N$-graded $k$-algebra , where $k$ is algebraically closed and $R$ is a domain. Let $I$ be a finite co-length homogeneous ideal. Let $h$ be the smallest integer such that $I$ contains a non-zero homogeneous element of degree $h$. Then $$F(R,R,I)(y)= e_R(\frac{1-e^{-ihy}}{iy}),$$ where $e_R$ is the Hilbert-Samuel multiplicity of $R$ (see \Cref{HS multiplicity}).
\end{proposition}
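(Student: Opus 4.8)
The plan is to pass to the normalization and reduce to the polynomial-ring computation already recorded in \Cref{one variable polynomial case}. Let $S$ denote the integral closure of $R$ in its fraction field. Since $R$ is a finitely generated graded domain over $k$, $S$ is a module-finite, degree-preserving extension of $R$; it is again a normal $\mathbb{N}$-graded domain of Krull dimension one (negative graded pieces vanish because an element of negative degree integral over the $\mathbb{N}$-graded $R$ would satisfy a monic equation with zero coefficients), and its degree-zero part $S_0$ is a domain integral over $R_0=k$, hence a field algebraic over $k$, hence $S_0=k$ as $k$ is algebraically closed. Regarding $S$ as a finitely generated $\Z$-graded $R$-module of dimension one, its support is all of $\operatorname{Spec}R$, whose unique top-dimensional minimal prime is $(0)$ with $\lambda_{R_{(0)}}(S_{(0)})=[\operatorname{Frac}(S):\operatorname{Frac}(R)]=1$. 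So \Cref{FP functions are described by generic ranks} gives $F(S,R,I,1)(y)=F(R,R,I)(y)$, while \Cref{restriction of scalars} gives $F(S,R,I,1)(y)=F(S,S,IS,1)(y)$; hence $F(R,R,I)(y)=F(S,S,IS)(y)$, and $IS$ is again a finite co-length homogeneous ideal.

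The heart of the argument is the classification $S\cong k[u]$ as graded $k$-algebras, with $u$ homogeneous of some degree $\delta\ge 1$. To prove it, fix a nonzero homogeneous $f\in S$ of positive degree and consider $S\hookrightarrow S[1/f]$. The degree-zero subring $A=(S[1/f])_0$ is a domain over which $f$ is transcendental, so $\operatorname{trdeg}_k\operatorname{Frac}(A)=0$ (as $\operatorname{trdeg}_k\operatorname{Frac}(S)=1$); since $k$ is algebraically closed this forces $A=k$. The degrees occurring in $S[1/f]$ form a subgroup $c\Z$ of $\Z$, and for each $n\in c\Z$ the multiplication pairing $(S[1/f])_n\times(S[1/f])_{-n}\to(S[1/f])_0=k$ shows $\dim_k(S[1/f])_n=1$; in particular $\dim_k S_n\le 1$ for every $n$, so $\Lambda:=\{n:S_n\ne 0\}$ is a numerical semigroup contained in $c\N$ with $\gcd\Lambda=c$. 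Choosing $0\ne u\in(S[1/f])_c$, we get $u^{\deg f/c}=\lambda f$ for some $\lambda\in k^{\times}$ because $(S[1/f])_{\deg f}=kf$; this is a monic integral equation for $u$ over $S$, so normality gives $u\in S_c$, whence $S_{nc}=ku^n$ for all $n\ge 0$, $\Lambda=c\N$, and $S=k[u]$ with $\delta:=\deg u=c$.

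Now every homogeneous ideal of $k[u]$ is a power of $(u)$, so $IS=(u^m)$ for a unique $m\ge 1$ (here $m\ge 1$ since $I\subseteq\mathfrak{m}_R$ forces $IS\subsetneq S$), and the least degree of a nonzero homogeneous element of $IS$ is $m\delta$. This equals $h$: the inclusion $I\subseteq IS$ gives $m\delta\le h$, and conversely a homogeneous $0\ne s\in IS$ of degree $m\delta$ is a finite sum $\sum a_jb_j$ with $a_j\in I$, $b_j\in S$ homogeneous and $\deg a_j+\deg b_j=m\delta$; since $S$ is $\N$-graded, $\deg b_j\ge 0$, so some nonzero $a_j$ satisfies $h\le\deg a_j\le m\delta$. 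By \Cref{one variable polynomial case}, $F(S,S,IS)(y)=F(k[u],k[u],(u^m))(y)=\dfrac{1-e^{-i\delta m y}}{i\delta y}=\dfrac{1}{\delta}\cdot\dfrac{1-e^{-ihy}}{iy}$. Finally, from $0\to R\to S\to S/R\to 0$, in which $S/R$ is a finitely generated torsion $R$-module and hence of finite length, we get $H_R(t)=H_S(t)-H_{S/R}(t)$ with $H_{S/R}$ a polynomial; so by \Cref{Samuel multiplicity in terms of Hilbert-Poincare series}, $e_R=\lim_{t\to 1}(1-t)H_R(t)=\lim_{t\to 1}(1-t)H_S(t)=e_{k[u]}=1/\delta$. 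Combining the last two displays gives $F(R,R,I)(y)=e_R\,\dfrac{1-e^{-ihy}}{iy}$.

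The main obstacle is the structural classification of one-dimensional normal $\N$-graded domains over an algebraically closed field as polynomial rings $k[u]$ — in particular the uniform bound $\dim_k S_n\le 1$, where algebraic closedness of $k$ is indispensable — after which everything reduces, via \Cref{FP functions are described by generic ranks}, \Cref{restriction of scalars} and \Cref{one variable polynomial case}, to the one-variable case already in the paper; the only other point requiring care is matching the internal invariant $m$ of $IS$ with the prescribed least degree $h$ of $I$.
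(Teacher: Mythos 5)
Your proof is correct and follows the same overall strategy as the paper: pass to the normalization $S$ of $R$, identify $S$ with a one-variable graded polynomial ring $k[u]$, and reduce to the explicit computation in \Cref{one variable polynomial case}. The difference is in what you prove versus cite. The paper invokes Theorem 11, Ch.~VII of Zariski--Samuel for the normalization being a finite graded extension and Serre's result (Appendix III) for the classification $S\cong k[u]$; you instead reprove the classification from scratch via $S[1/f]$, and you separately verify $e_R=e_S$ by using that $S/R$ has finite length rather than the paper's one-line appeal to generic rank. You are also more careful on a point the paper elides: that the minimal degree of a homogeneous element of $IS$ equals the minimal degree $h$ of a homogeneous element of $I$ (you give the up--down argument using $\N$-gradedness of $S$), which is what makes $IS=(u^m)$ with $m\delta=h$. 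These are all correct, and the extra care on the $m\delta=h$ step improves on the paper's terseness. One small thing to tighten: you assert ``the degrees occurring in $S[1/f]$ form a subgroup $c\Z$'' without justification, and this is needed before you can use the pairing $(S[1/f])_n\times(S[1/f])_{-n}\to k$ (otherwise $(S[1/f])_{-n}$ might vanish). A quick argument: the set of degrees $D$ is a sub-monoid of $\Z$ containing $\pm\deg f$, hence a union of cosets of $(\deg f)\Z$; its image in $\Z/(\deg f)\Z$ is a sub-monoid of a finite group, hence a subgroup, so $D$ is a subgroup of $\Z$. With that spelled out, the argument is complete.
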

\begin{proof}
Let $S$ be the normalization of $R$. By Theorem 11, chapter VII, \cite{ZS2}, $S$ is an $\N$-graded $R$-module and by Theorem 9, chapter 5, \cite{ZS1} $S$ is finitely generated over $k$. The generic rank of $S$ as an $R$-module is one, hence $F(S,R,I)= F(R,R,I)$. Since $k$ is algebraically closed $S_0= k$. So by \Cref{restriction of scalars} $F(S,R,I)$ is the same as $F(S,S,IS)$. So we compute $F(S,S, IS)$. Since $S$ is an $\N$-graded normal $k$-algebra, by Theorem 1, section 3, Appendix III of \cite{Serre}, $S$ is isomorphic to a graded polynomial ring in one variable. So the ideal $IS$ is a homogeneous principal ideal. By our assumption, $IS$ is generated by a degree $h$ homogeneous element $f$; note that $f$ is a homogeneous system of parameter of $S$. Thus by \Cref{FP function wrt a homogeneous SOP}, $F(S,S, IS)(y)= e_S(\frac{1-e^{-ihy}}{iy})$. Since $S$ has generic rank one as an $R$ module $e_R=e_S$.
\end{proof}
Let $S$ be a ring containing a field of characteristic $p>0$. Recall that the tight closure of an ideal $J \subseteq S$ is the ideal consisting of all $x \in S$ such that there is a $c \in S$, not in any minimal prime of $S$ such that, $cx^{p^n} \in \bp{J}{n}$ for all large $n$ --see Definition 3.1, \cite{HH}). The tight closure of $J$ is denoted by $J^*$. The theory of Hilbert-Kunz multiplicity is related to the theory of tight closure: for a Noetherian local domain $S$ whose completion is also a domain and ideals $J_1 \subseteq J_2$, the corresponding Hilbert-Kunz multiplicities are the same if and only if $J_1^*= J_2^*$ - see Proposition 5.4, Theorem 5.5 of \cite{HunekeExp} and Theorem 8.17, \cite{HH}. A similar relation between tight closure of an ideal and the corresponding Frobenius-Poincar\'e function is the content of the next result.
\begin{theorem}\label{dependence on tight closure}
Let $R$ is a finitely generated $\N$- graded $k$-algebra. Let $I \subseteq J$ be two finite colength homogeneous ideals of $R$
\begin{enumerate}
    \item If $J$ is contained in $I^*$-the tight closure of $I$, $F(R,R,I)= F(R,R,J)$.
    \item Suppose that all of the minimal primes of $R$ have the same dimension. If $F(R,R,I)= F(R,R,J)$, $J \subseteq I^*$.
\end{enumerate}
\end{theorem}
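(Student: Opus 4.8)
The plan is to reduce both statements to the corresponding known facts about Hilbert-Kunz multiplicity, but applied not just to the modules $R/\bp{I}{n}R$ themselves but to all their ``shifted truncations'' simultaneously, so that we recover the whole Hilbert series and not just its value at $y=0$. For part (1), suppose $J \subseteq I^*$. Fix $c \in R$ avoiding every minimal prime of $R$ with $c J^{[p^m]} \subseteq I^{[p^m]}$ for all $m \gg 0$; since $J$ is finitely generated we may pick a single such $c$ and a single threshold. The key observation is that for each fixed degree $t$ and each $n$, the graded pieces $(\tfrac{R}{\bp{I}{n}R})_t$ and $(\tfrac{R}{\bp{J}{n}R})_t$ differ only by a ``small'' amount: multiplication by $c$ gives a graded map $R/\bp{J}{n}R \to R/\bp{I}{n}R$ (after the threshold), whose kernel and cokernel are annihilated by $c$ and hence are supported on $\operatorname{Spec}(R/c)$, a scheme of dimension $< d$. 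Concretely, I would run this through the exact sequence machinery already set up in \Cref{vanishing}: the short exact sequences relating $R/\bp{I}{n}R$, $R/\bp{J}{n}R$, and the $c$-torsion/cotorsion modules, together with \Cref{support} (uniform bound on the degrees where these modules live) and \Cref{bounded growth} (the lower-dimensional error modules have length $O((p^n)^{d-1})$), show that $|F_n(R,R,I,d)(y) - F_n(R,R,J,d)(y)| \le D/p^n$ uniformly on compacta. Taking $n \to \infty$ gives $F(R,R,I) = F(R,R,J)$.

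For part (2), assume all minimal primes of $R$ have the same dimension $d$, so $\dim(R/Q) = d$ for every minimal prime $Q$, and suppose $F(R,R,I) = F(R,R,J)$; I want to conclude $J \subseteq I^*$. By the colon-capturing/Briançon-Skoda-style characterization of tight closure via Hilbert-Kunz multiplicity — precisely the statement (Theorem 5.5 of \cite{HunekeExp}, or Theorem 8.17 of \cite{HH}) that under the equidimensionality hypothesis an element $x$ lies in $I^*$ iff $e_{HK}(R,(I,x)) = e_{HK}(R,I)$ — it suffices to show that for every homogeneous $x \in J$ we have $e_{HK}(R,(I,x)) = e_{HK}(R,I)$. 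Now $I \subseteq (I,x) \subseteq J$, and Hilbert-Kunz multiplicity is monotone under inclusion of ideals, so $e_{HK}(R,J) \le e_{HK}(R,(I,x)) \le e_{HK}(R,I)$. But $e_{HK}(R,I) = F(R,R,I)(0) = F(R,R,J)(0) = e_{HK}(R,J)$ by hypothesis (using that the Hilbert-Kunz multiplicity is the value of the Frobenius-Poincaré function at zero, as recorded after \Cref{HK multiplicity} and in the introduction). Hence $e_{HK}(R,(I,x)) = e_{HK}(R,I)$, so $x \in I^*$; since this holds for all homogeneous generators $x$ of $J$ and $I^*$ is an ideal, $J \subseteq I^*$.

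I expect the main obstacle to be part (1): making rigorous that the ``$c$-torsion error'' between the two systems of graded modules is genuinely lower-dimensional and contributes only $O((p^n)^{d-1})$ to each graded length sum in a way that survives multiplication by the exponentials $e^{-iyj/p^n}$. The exponential weights are harmless because of the uniform degree bound from \Cref{support} (which forces $|j| \le Cp^n$, so $|e^{-iyj/p^n}|$ is bounded on any compact set, exactly as in \eqref{bound on the sequence}); the real content is organizing the maps $R/\bp{J}{n}R \xrightarrow{\cdot c} R/\bp{I}{n}R$ into short exact sequences whose outer terms are modules over $R/cR$ (or a further quotient) of dimension $<d$, and then invoking \Cref{bounded growth} and the additivity/estimates of \Cref{vanishing}. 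One must also handle the finitely many small values of $n$ below the tight-closure threshold, but those contribute nothing in the limit. A minor point for part (2) is that $I^*$ being a homogeneous ideal (so that it suffices to test homogeneous generators of $J$) follows from the graded structure; and the equidimensionality hypothesis is used precisely to license the tight-closure criterion via Hilbert-Kunz multiplicity.
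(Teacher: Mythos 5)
Part (1): your approach is essentially the paper's. You both choose $c$ not in any minimal prime with $c\,\bp{J}{n}\subseteq \bp{I}{n}$ for $n\gg 0$, bound the length of the error by something of order $(p^n)^{d-1}$ via \Cref{bounded growth}, and observe the exponential weights are harmless because of the degree bound from \Cref{support}. The paper gets the $O((p^n)^{d-1})$ bound slightly more directly --- it exhibits a surjection $\bigoplus_{j=1}^{r} R/(c,\bp{I}{n}) \twoheadrightarrow \bp{J}{n}/\bp{I}{n}$ with $R/(c,\bp{I}{n})$ manifestly a module over $R/cR$ of dimension $\le d-1$ --- rather than organizing the multiplication-by-$c$ map into short exact sequences and invoking \Cref{vanishing}. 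Your route works too (the kernel of $c: R/\bp{J}{n}\to R/\bp{I}{n}$ is killed by $c$, and its length equals the length of the cokernel $R/(c,\bp{I}{n})$ since both are finite-length and length is additive under multiplication by $c$), but the paper's bound is shorter.

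Part (2): the high-level idea --- evaluate at $y=0$ to get $e_{HK}(R,I)=e_{HK}(R,J)$, then invoke the tight-closure characterization of Hilbert--Kunz multiplicity --- is correct, and your elementwise reduction via $I\subseteq (I,x)\subseteq J$ is fine but unnecessary (the cited theorems already take an inclusion $I\subseteq J$). The gap is in how you invoke Theorem 5.5 of \cite{HunekeExp} (equivalently Theorem 8.17 of \cite{HH}). Those theorems are for a \emph{local} ring that is \emph{formally unmixed} (i.e., the completion is unmixed), not merely for a graded ring all of whose minimal primes have the same dimension. Applying them to $R$ or $R_m$ directly requires knowing that $\widehat{R_m}$ is unmixed, which is not obviously a consequence of the stated hypothesis. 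The paper circumvents exactly this point: it first uses the localization formula \Cref{FP functions are described by generic ranks} together with monotonicity to force $e_{HK}(\frac{R}{P_j}, I\frac{R}{P_j}) = e_{HK}(\frac{R}{P_j}, J\frac{R}{P_j})$ for every minimal prime $P_j$; it then proves $\widehat{\frac{R}{P_j}}$ is a \emph{domain} (via the associated graded ring argument), so the hypotheses of the local tight-closure theorem are clearly met; and finally it transfers the tight-closure equality back to the graded ring and up to $R$ using Theorem 1.5 and Theorem 1.3(c) of \cite{HunekeTight}. Your proposal, as written, skips the reduction to the quotient domains and the check on the completion, so the application of the cited theorem is not licensed by the hypotheses you have at hand.
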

\begin{proof}
1) Denote the Krull dimension of $R$ by $d$. For (1), first we argue that there is a constant $D$ such that  $\lambda(\frac{\bp{J}{n}}{\bp{I}{n}})$ is bounded above by $D(p^n)^{d-1}$ for all large n. Since $J \subseteq I^*$, there exists a $c \in R$- not in any minimal primes of $R$ such that $c \bp{J}{n} \subseteq \bp{I}{n}$, for all large $n$\footnote{$c$ can be chosen to be homogeneous}. Pick a set of homogeneous generators of $g_1, g_2, \ldots, g_{r}$ of $J$. Since the images of $g_1^{p^n},\ldots, g_r^{p^n}$ generate $\frac{\bp{J}{n}}{\bp{I}{n}}$, we get a surjection for each $n$:
$$\bigoplus \limits_{j=1}^{r}\frac{R}{(c, \bp{I}{n})} \twoheadrightarrow \frac{\bp{J}{n}}{\bp{I}{n}}.$$
So the length of $\frac{\bp{J}{n}}{\bp{I}{n}}$ is bounded above by $r \lambda(\frac{R}{(c, \bp{I}{n})})$. Since $c$ is not in any minimal prime of $R$, $\text{dim}(\frac{R}{cR})$ is at most $d-1$. The existence of the desired $D$ is apparent once we use \Cref{bounded growth} to bound the growth of $\lambda(\frac{R}{(c, \bp{I}{n})})$.\\\\
Now, pick $N_0 \in \mathbb{N}$ such that for $j \geq N_0 p^n$, $(\frac{R}{\bp{I}{n}R})_j=0$ for all $n$. Given $y \in \mathbb{C}$,
\begin{equation*}
    \begin{split}
      |F_n(R,R,I)(y)- F_n(R,R,J)(y)|
      & = (\frac{1}{p^n})^d |\sum \limits_{j=0}^{\infty}\lambda((\frac{\bp{J}{n}}{\bp{I}{n}})_j)e^{-iyj/p^n}|\\
      & \leq (\frac{1}{p^n})^d\sum \limits_{j=0}^{\infty}|\lambda((\frac{\bp{J}{n}}{\bp{I}{n}})_j)||e^{-iyj/p^n}|\\
      & \leq (\frac{1}{p^n})^d\lambda(\frac{\bp{J}{n}}{\bp{I}{n}})e^{N_0|y|} \ .
    \end{split} 
\end{equation*}
To get the last inequality, we have used that for $j \leq N_0p^n$, $|e^{-iyj/p^n}| \leq e^{N_0|y|}$. Since $$\underset {n \to \infty}{\lim}(\frac{1}{p^n})^d \lambda(\frac{\bp{J}{n}}{\bp{I}{n}}) \leq \underset{n \to \infty}{\lim}\frac{1}{p^n}D=0,$$ we get $F(R,R,I)(y)= F(R,R,J)(y)$.\\\\
(2) Let $P_1, \ldots, P_t$ be the minimal primes of $R$. For a finite co-length homogeneous ideal $\mathfrak{a}$, denote the Hilbert-Kunz multiplicity of the triple $(R,R, \mathfrak{a})$ (see \Cref{HK multiplicity}) by $e_{\text{HK}}(R,\mathfrak{a})$. Since all the minimal primes of $R$ have the same dimension, evaluating the equality in \Cref{FP functions are described by generic ranks} at $y=0$ and using \Cref{Hk multiplicity from the Frobenis-Poincare function}, we get
\begin{equation}\label{localization formula for HK multiplicity}
e_{HK}(R,\mathfrak{a})= \sum \limits_{j=1}^{t}e_{HK}(\frac{R}{P_j}, \mathfrak{a}\frac{R}{P_j}) \ .
\end{equation}
Since for each $j$, where $1 \leq j \leq t$, $e_{HK}(\frac{R}{P_j}, I\frac{R}{P_j}) \geq e_{HK}(\frac{R}{P_j}, J\frac{R}{P_j})$ and $e_{HK}(R,I)= F(R,R,I)(0)= F(R,R,J)(0)=e_{HK}(R,J)$, using \eqref{localization formula for HK multiplicity}, we conclude that for each minimal prime $P_j$, $e_{HK}(\frac{R}{P_j}, I\frac{R}{P_j}) = e_{HK}(\frac{R}{P_j}, J\frac{R}{P_j})$. From here we show that the tight closure of $I\frac{R}{P_j}$ and $J\frac{R}{P_j}$ in $\frac{R}{P_j}$ are the same for any $j$; this coupled with Theorem 1.3, (c) of \cite{HunekeTight} establishes that $I^*= J^*$. To this end, fix a minimal prime $P_j$. First note that $\hat{\frac{R}{P_j}}$: the completion of $\frac{R}{P_j}$ at the homogeneous maximal ideal is a domain. To see this, set $I_n \subseteq \frac{R}{P_j}$ to be the ideal generated by forms of degree at least $n$. Then the associated graded ring of $\hat{\frac{R}{P_j}}$ with respect to the filtration $(I_n\hat{\frac{R}{P_j}})_n$ is isomorphic to the domain $\frac{R}{P_j}$- so by Theorem 4.5.8, \cite{BH} $\hat{\frac{R}{P_j}}$ is a domain. Set $m_j$ to be the maximal ideal of $\frac{R}{P_j}$. Since $\hat{\frac{R}{P_j}}$ is a domain, by Theorem 5.5, \cite{HunekeExp} $(I(\frac{R}{P_j})_{m_j})^*= (J(\frac{R}{P_j})_{m_j})^*$. Since both $I$ and $J$ are $m_j$-primary, by Theorem 1.5, \cite{HunekeTight}, we conclude that the tight closures of $I\frac{R}{P_j}$ and $J\frac{R}{P_j}$ in $\frac{R}{P_j}$ are the same.
\end{proof}
Next, we set to show that over a standard graded ring, our Frobenius-Poincar\'e functions are holomorphic Fourier transforms of Hilbert-Kunz density functions introduced in \cite{TriExist}. We first recall a part of a result in \cite{TriExist} that implies the existence of Hilbert-Kunz density functions.
\begin{theorem}\label{HK density function}(see Theorem 1.1 and Theorem 2.19 \cite{TriExist})
Let $k$ be a field of characteristic $p>0$, $R$ be a standard graded $k$-algebra of Krull dimension $d \geq 1$, $I$ be a homogeneous ideal of finite co-length. Given a finitely generated $\N$-graded $R$-module $M$, consider the sequence $(g_n)_n$ of real valued functions defined on the real line where
\begin{equation}\label{formula for density function}
    g_n(x)= (\frac{1}{p^n})^{d-1}\lambda_k((\frac{M}{\bp{I}{n}M})_{\lfloor x p^n \rfloor}).
\end{equation}

Then
\begin{enumerate}
    \item There is a compact subset of the non-negative real line containing the support of $g_n$ for all $n$.
    \item  The sequence $(g_n)$ converges pointwise to a compactly supported function $g$. Furthermore, when $d \geq 2$, the convergence is uniform and $g$ is continuous.
\end{enumerate}
\end{theorem}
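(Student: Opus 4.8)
The plan is to transport the problem to the projective variety $X=\mathrm{Proj}(R)$ and to reduce the pointwise (resp. uniform) convergence of $(g_n)$ to a statement about the cohomology of Frobenius pullbacks of the syzygy bundle of $I$; this is where the machinery of \Cref{vector bundles on curve} --- Harder--Narasimhan filtrations and Langer's stabilization theorem \Cref{Langer} --- would enter. First I would make the usual reductions. Using a prime filtration of $M$, additivity of the relevant length functions, the harmlessness of grading shifts (they only reindex $g_n$, which washes out in the limit), and the fact that minimal primes of dimension $<d$ contribute terms of order $O((p^n)^{d-2})$ by \Cref{bounded growth}, one reduces to $R$ a standard graded domain and $M=R$; enlarging $k$ to its algebraic closure changes nothing, so assume $k=\bar k$. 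Claim (1) is then immediate from the mechanism of \Cref{support}: since $I$ has finite colength, $\mathfrak m^{Cp^n}\subseteq I^{[p^n]}$ for a constant $C$ independent of $n$, so every $g_n$ is supported in a fixed compact interval $[0,C]$.

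For $d=1$: here $X$ is a finite reduced scheme and $R$ is module-finite over a one-variable polynomial subring $k[t]$, $\deg t=h$, with $h$ the least degree occurring in $I$; a direct computation shows $g_n(x)=\lambda((R/I^{[p^n]})_{\lfloor xp^n\rfloor})$ stabilizes to $e_R$ for $0<x<h$, to $0$ for $x>h$, and to fixed values at the two endpoints, so $(g_n)$ converges pointwise to $e_R\cdot\mathbf 1_{[0,h)}$ (up to endpoint conventions) --- a compactly supported step function, not continuous, which is exactly why uniform convergence is claimed only for $d\ge 2$. For $d\ge 2$, let $X=\mathrm{Proj}(R)$, integral of dimension $d-1\ge1$, and write $I=(f_1,\dots,f_\mu)$, $\deg f_i=d_i$. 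Since $I$ is $\mathfrak m$-primary, $V(I)\cap X=\varnothing$, so $\bigoplus_i\mathcal O_X(-d_i)\to\mathcal O_X$ is surjective with locally free kernel $V$; applying the $n$-th absolute Frobenius (flat on the regular locus, whose complement is lower-dimensional and contributes a negligible error) and twisting by $\mathcal O_X(m)$ gives $0\to(F^{n*}V)(m)\to\bigoplus_i\mathcal O_X(m-p^nd_i)\to\mathcal O_X(m)\to0$. Comparing the graded pieces of $R$ with the relevant $H^0$'s (the discrepancy being local cohomology of $R$, of bounded length in the degrees that matter) and chasing the cohomology sequence, one gets
\begin{equation*}
\lambda\bigl((R/I^{[p^n]})_m\bigr)=\dim_k R_m-\sum_i\dim_k R_{m-p^nd_i}+h^0\bigl((F^{n*}V)(m)\bigr)+(\text{error}),
\end{equation*}
where for $m=\lfloor xp^n\rfloor$ the Hilbert-function terms, divided by $(p^n)^{d-1}$, converge to explicit polynomials in $x$ on each linear piece, and the error --- built from $h^{>0}(\mathcal O_X(\cdot))$, which by Serre vanishing and Serre duality is supported in a bounded range of twists when $d\ge3$, and is asymptotically a continuous function of $x$ when $d=2$ --- is either $o((p^n)^{d-1})$ or converges uniformly to a continuous function.

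It thus remains to prove that for a fixed vector bundle $\mathcal F$ on $X$, the sequence $(1/p^n)^{d-1}h^0\bigl((F^{n*}\mathcal F)(\lfloor xp^n\rfloor)\bigr)$ converges, uniformly in $x\in\mathbb R$, to a continuous function of compact modification. I would do this by induction on $\dim X$. After replacing $\mathcal O_X(1)$ by a high Veronese power if necessary, a general hyperplane section $H$ is a smooth subvariety meeting all relevant strata properly, and the exact sequences $0\to\mathcal F(\ell-1)\to\mathcal F(\ell)\to\mathcal F|_H(\ell)\to0$ together with their $F^{n*}$-twists reduce $h^0\bigl((F^{n*}\mathcal F)(\lfloor xp^n\rfloor)\bigr)$ to a telescoping sum of the same quantities on $H$, of dimension one less. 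The base case $\dim X=1$ is a smooth projective curve: by Langer's theorem \Cref{Langer}, for $n\ge n_0$ the Harder--Narasimhan filtration of $F^{n*}\mathcal F$ is the $f^{\,n-n_0}$-pullback of a fixed filtration, so the slopes of its semistable quotients scale by $p^{\,n-n_0}$; Riemann--Roch on each quotient (a semistable bundle of large positive slope has no $H^1$, one of large negative slope has no $H^0$, cf. \Cref{minimum slope negative implies no global sections}) then computes $h^0$ of the twists exactly in terms of these slopes, and the limit is an explicit piecewise-linear, hence continuous, function of $x$.

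The hard part will be exactly this last reduction: controlling $h^j$ of Frobenius pullbacks uniformly in the twist $\lfloor xp^n\rfloor$. On curves the Harder--Narasimhan filtration plus Langer's theorem handles it cleanly, but in higher dimensions one must verify that (i) the error between $h^0(\mathcal F(\ell))$ and the staircase sum over hyperplane sections is $o((p^n)^{d-1})$ uniformly in both $n$ and $x$ --- which requires bounding, independently of $n$, the failure of $H^0(\mathcal F(j))\to H^0(\mathcal F|_H(j))$ to be surjective --- and (ii) a single generic hyperplane section can be chosen to work for all $n$ at once. Uniform convergence when $d\ge2$ --- and hence continuity of the limit $g$ --- then follows because every ingredient above converges uniformly; in dimension one the curve base case degenerates to a count at integer points, where the floor function produces persistent oscillations that are not damped by the normalization $(1/p^n)^{d-1}=1$, which is precisely why only pointwise convergence survives there.
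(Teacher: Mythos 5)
The theorem you are attempting to prove is \emph{cited}, not proved, in this paper: it is Trivedi's Theorem 1.1 and Theorem 2.19 from \cite{TriExist}, quoted as background and used as input to \Cref{FP function is fourier transform of density function}. There is therefore no proof in the paper to compare your sketch against. What the paper does record, in \Cref{comparison between FP function and HK density function}, is that for $d\geq 2$ the existence and continuity of the Hilbert--Kunz density function can be \emph{re-derived} from the paper's own \Cref{the main theorem} (existence of the Frobenius--Poincar\'e function) by Fourier inversion of a compactly supported distribution. That alternate route is purely harmonic-analytic and never touches $\mathrm{Proj}(R)$, syzygy bundles, or Harder--Narasimhan filtrations, so your geometric approach is logically independent of anything in the paper and closer in spirit to the original arguments of Brenner and Trivedi.

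As for whether the sketch closes on its own: the reductions, the $d=1$ computation, and the curve base case are all fine (the step function you write down agrees with what Fourier inversion of \Cref{FP function in dimension one} yields), but the inductive step from curves to $X=\mathrm{Proj}(R)$ of dimension $\geq 2$ does not close, precisely for the reason you flag at the end. Telescoping $h^0\bigl((F^{n*}\mathcal F)(\lfloor xp^n\rfloor)\bigr)$ along a hyperplane section accumulates error terms of the form $H^1\bigl((F^{n*}\mathcal F)(j)\bigr)$ over a window of $O(p^n)$ twists, a window that grows with $n$; on $X$ of dimension $d-1$ each such $H^1$ can individually have size $O\bigl((p^n)^{d-2}\bigr)$ in the intermediate range, so the accumulated error is $O\bigl((p^n)^{d-1}\bigr)$, of the same order as the main term after the normalization $(1/p^n)^{d-1}$. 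Serre vanishing controls only a bounded window for a fixed bundle, not one scaling with $p^n$, and Langer's theorem lives on curves; neither supplies the uniform estimate you need, and obtaining it is precisely the real content of Trivedi's argument in \cite{TriExist}. Without that estimate, what you have is a plausible outline of the cited result rather than a proof of it.
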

The function $g$ in \Cref{HK density function} is called the \textit{Hilbert-Kunz density function} associated to the triple $(M,R,I)$.\\\\
Recall that the \textit{holomorphic Fourier transform} of a compactly supported Lebesgue integrable function $h$ defined on the real line is the holomorphic function $\hat{h}$ given by
$$\hat{h}(y)= \underset{\mathbb{R}}{\int} h(x)e^{-iyx}dx, $$ where the integral is a \textit{Lebesgue integral} (see Chapter 2, \cite{Rudin}).
\begin{proposition}\label{FP function is fourier transform of density function}
The holomorphic Fourier transform of the Hilbert-Kunz density function associated to a triple $(M,R,I)$ as in \Cref{HK density function} is the Frobenius-Poincar\'e function $F(M,R,I,d)$.
\end{proposition}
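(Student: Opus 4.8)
The plan is to compute the holomorphic Fourier transform $\widehat{g_n}$ of the step function $g_n$ from \eqref{formula for density function} in closed form, recognize it as an explicit scalar multiple of $F_n(M,R,I,d)$, and then let $n \to \infty$ in two different ways.

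First I would record the exact identity. On the interval $[j/p^n,(j+1)/p^n)$ the function $g_n$ is constant with value $(1/p^n)^{d-1}\lambda_k((M/\bp{I}{n}M)_j)$, and since $\int_{j/p^n}^{(j+1)/p^n}e^{-iyx}\,dx = e^{-iyj/p^n}(1-e^{-iy/p^n})/(iy)$, integrating term by term over the finitely many nonzero terms (finiteness by part (1) of \Cref{HK density function}) gives
\begin{equation*}
\widehat{g_n}(y) = \sum_{j}\Bigl(\tfrac{1}{p^n}\Bigr)^{d-1}\lambda_k\Bigl(\Bigl(\tfrac{M}{\bp{I}{n}M}\Bigr)_{j}\Bigr)\int_{j/p^n}^{(j+1)/p^n}e^{-iyx}\,dx = \frac{p^n(1-e^{-iy/p^n})}{iy}\,F_n(M,R,I,d)(y),
\end{equation*}
where both sides extend across $y=0$ to entire functions exactly as in \Cref{one variable polynomial case}. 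Now I would take $n\to\infty$: by \Cref{the essential limit} the scalar factor $p^n(1-e^{-iy/p^n})/(iy)$ tends to $1$ uniformly on every compact subset of $\mathbb{C}$, and $F_n(M,R,I,d) \to F(M,R,I,d)$ uniformly on every compact subset of $\mathbb{C}$ --- by \Cref{the main theorem} when $d=\text{dim}(M)$ and by \Cref{FP functions are bounded on a compact subset}(2) when $d>\text{dim}(M)$ (note $d=\text{dim}(R)\ge\text{dim}(M)$). Hence $\widehat{g_n}\to F(M,R,I,d)$ uniformly on compact subsets of $\mathbb{C}$.

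On the other hand I would show $\widehat{g_n}(y)\to\widehat{g}(y)$ for each real $y$ by dominated convergence. Part (1) of \Cref{HK density function} puts all the $g_n$, hence also their pointwise limit $g$, inside a fixed interval $[0,C]$; and from $\lambda_k((M/\bp{I}{n}M)_j)\le\dim_k M_j = O(j^{\text{dim}(M)-1})$ together with $j\le Cp^n$ on the support of $g_n$ one gets a uniform bound $|g_n(x)|\le B\,\mathbf{1}_{[0,C]}(x)$ with $B$ independent of $n$. Since $g_n\to g$ pointwise (part (2) of \Cref{HK density function}) and $B\,\mathbf{1}_{[0,C]}$ is Lebesgue integrable, dominated convergence yields $\int_{\mathbb{R}}g_n(x)e^{-iyx}\,dx \to \int_{\mathbb{R}}g(x)e^{-iyx}\,dx$, i.e. $\widehat{g_n}(y)\to\widehat{g}(y)$; in particular $g$ is integrable, so $\widehat{g}$ is defined. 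Comparing the two limits gives $\widehat{g}=F(M,R,I,d)$ on the real line, hence everywhere since both functions are entire.

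The computation is essentially routine; the only steps needing care are the bookkeeping around the removable singularity at $y=0$ (handled exactly as in \Cref{one variable polynomial case} and \Cref{the essential limit}) and the production of a single integrable majorant for the family $(g_n)_n$, which is what legitimizes passing the limit inside the integral even in the dimension-one case, where \Cref{HK density function} supplies only pointwise convergence of the $g_n$.
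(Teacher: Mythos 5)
Your proof is correct and follows essentially the same route as the paper: you compute $\widehat{g_n}$ in closed form as $\tfrac{p^n(1-e^{-iy/p^n})}{iy}F_n(M,R,I,d)(y)$, use \Cref{the essential limit} to send this to $F(M,R,I,d)$, and separately pass to the limit inside the integral by dominated convergence after producing a uniform bound $|g_n|\le B\,\mathbf{1}_{[0,C]}$ (the paper obtains the same bound from $\lambda(M_m)/m^{d-1}$ being bounded). The only cosmetic difference is that you spell out the case $d>\dim(M)$ via \Cref{FP functions are bounded on a compact subset}(2) and conclude by comparing two limits, while the paper first fixes $\widehat{g_n}\to\widehat{g}$ and then identifies the limit.
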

\begin{proof}
Let $g_n$ and $g$ be as in \Cref{HK density function}. We first establish the claim that there is a constant $C$, such that for any real number $x$ and all $n$, $g_n(x) \leq C$. We can assume that there is  compact subset $[0,N]$ containing the support of $g_n$ for all $n$ (see (1), \Cref{HK density function}). Now given $x$ where $\frac{1}{p^n} \leq x \leq N$,
$$g_n(x) \leq (\frac{1}{p^n})^{d-1}\lambda(M_{\lfloor xp^n \rfloor}) = (\frac{{\lfloor xp^n \rfloor}}{p^n})^{d-1} \frac{\lambda(M_{\lfloor xp^n \rfloor})}{({\lfloor xp^n \rfloor})^{d-1}} \leq N^{d-1}\frac{\lambda(M_{\lfloor xp^n \rfloor})}{({\lfloor xp^n \rfloor})^{d-1}}.$$
Since the function $\frac{\lambda(M_m)}{m^{d-1}}$ is bounded above by a constant (see Proposition 4.4.1 and Exercise 4.4.11 of \cite{BH}), the claim follows.

The bound on $g_n$ allows us to use dominated convergence theorem to the sequence $(g_n)_n$, which implies that the sequence of functions $(\hat{g_n})_n$ converges to $\hat{g}$ pointwise. Now we claim that the sequence $(\hat{g_n})$ in fact converges to  $F(M,R,I,d)$ pointwise; this would imply $\hat{g}= F(M,R,I,d)$.\\
Now for a non-zero complex number $y$,
\begin{equation}\label{Fourier transform of the sequence}\begin{split}
\hat{g_n}(y)&=(\frac{1}{p^n})^{d-1} \int_{0}^{\infty} \lambda_k((\frac{M}{\bp{I}{n}M})_{\lfloor x p^n \rfloor})e^{-iyx}dx\\
&= (\frac{1}{p^n})^{d-1} \sum \limits_{j=0}^{\infty} \int_{\frac{j}{p^n}}^{\frac{j+1}{p^n}}\lambda_k((\frac{M}{\bp{I}{n}M})_{j})e^{-iyx}dx\\
&= (\frac{1}{p^n})^{d-1} \sum \limits_{j=0}^{\infty} \lambda_k((\frac{M}{\bp{I}{n}M})_{j})\left(\frac{e^{-iy(j+1)/p^n}-e^{-iyj/p^n}}{-iy}\right)\\
&= (\frac{1}{p^n})^{d-1} \sum \limits_{j=0}^{\infty} \lambda_k((\frac{M}{\bp{I}{n}M})_{j}) e^{-iyj/p^n}\left(\frac{e^{-iy/p^n}- 1}{-iy}\right)\\
&= (\frac{1}{p^n})^{d} \sum \limits_{j=0}^{\infty} \lambda_k((\frac{M}{\bp{I}{n}M})_{j}) e^{-iyj/p^n}\left(\frac{e^{-iy/p^n}- 1}{-iy/p^n}\right) \ .
\end{split}
\end{equation}
So using the last line of \eqref{Fourier transform of the sequence} and \Cref{the essential limit}, we get that for a non-zero complex number $y$,
$$\hat{g}(y)= \underset{n \to \infty}{\lim}\hat{g_n}(y)= \underset{n \to \infty}{\lim}(\frac{1}{p^n})^{d} \sum \limits_{j=0}^{\infty} \lambda_k((\frac{M}{\bp{I}{n}M})_{j}) e^{-iyj/p^n}= F(M,R,I,d)(y).$$
Note that,
\begin{equation}\label{value at zero}
\hat{g_n}(0)= (\frac{1}{p^n})^d\sum \limits_{j=0}^{\infty} \lambda((\frac{M}{\bp{I}{n}M})_j)= (\frac{1}{p^n})^d\lambda(\frac{M}{\bp{I}{n}M})= F_n(M,d)(0) \ .
\end{equation}
Taking limit as $n$ approaches infinity in \eqref{value at zero} gives $\hat{g}(0)= F(M,R,I,d)(0).$
\end{proof}
\begin{remark}\label{comparison between FP function and HK density function}
\begin{enumerate}
    \item Since a compactly supported continuous function can be recovered from its holomorphic Fourier transform (see Theorem 1.7.3, \cite{Hormander}), the existence of Frobenius-Poincar\'e functions gives an alternate proof of the existence of Hilbert-Kunz density functions in dimension $d \geq 2$.
    \item One way to incorporate zero dimensional ambient rings into the theory of Hilbert-Kunz density functions could be to realize the functions $g_n$ in \eqref{formula for density function} and the resulting Hilbert-Kunz density function as \textit{compactly supported distributions} (see Definition 1.3.2, \cite{Hormander}). Here by a \textit{distribution}, we mean a $\mathbb{C}$-linear map from the space of complex valued smooth functions on $\mathbb{R}$ to $\mathbb{C}$. In our case, the distribution defined by each $g_n$ sends the function $f$ to $\underset{\mathbb{R}}{\int}f(x)g_n(x) dx$. When the ambient ring has dimension at least one, the sequence of distributions defined $(g_n)_n$ converges to the distribution defined by the corresponding Hilbert-Kunz density function; see the Remark on page 7 of \cite{Hormander} for a precise meaning of convergence of distributions. Now suppose that $R$ has dimension zero and $M$ is a finitely generated $\Z$-graded $R$-module; let $(g_n)_n$ be the corresponding sequence of functions given by \eqref{formula for density function} with $d=0$. Direct calculation shows that for a complex valued smooth function $f$, the sequence of numbers $\underset{\mathbb{R}}{\int}f(x)g_n(x)dx$ converges to $\lambda_k(M)f(0)$. This means that the sequence of distributions defined by $(g_n)_n$ converges to the distribution $\lambda_k(M)\delta_0$- where $\delta_0$ is the distribution such that $\delta_0(f)= f(0)$. So it is reasonable to define the Hilbert-Kunz density function $g(M,R,I)$ to be the \textit{distribution} $\lambda_k(M)\delta_0$. In fact, incorporating the language of Fourier transform of distributions (see section 1.7, \cite{Hormander}), it follows that the Fourier transform of the Hilbert-Kunz density function (or distribution) is our Frobenius-Poincar\'e function irrespective of the dimension of the ambient ring. Going in the reverse direction, Hilbert-Kunz density function of a triple can be defined to be the unique compactly supported distribution whose Fourier transform is the corresponding Frobenius-Poincar\'e function. 
\end{enumerate}

\end{remark}

\section{Descriptions using Homological Information} \label{description using homological information}
In this section, we give alternate descriptions of Frobenius-Poincar\'e functions of $(R,R,I)$ in terms of the sequence of graded Betti numbers of $R/\bp{I}{n}$. Moreover when $R$ is Cohen-Macaulay, the Frobenius-Poincar\'e functions are described using the Koszul homologies of $\frac{R}{\bp{I}{n}}$ with respect to a homogeneous system of parameters of R. Some background material for this section on Hilbert-Samuel multiplicity, Hilbert series and graded Betti numbers is reviewed in \Cref{Homological} and \Cref{Hilbert series and Hilbert-Samuel multiplicity}.
\begin{theorem}\label{description in terms of Betti number}Let $S$ be a graded complete intersection over $k$ of Krull dimension $d$ and Hilbert-Samuel multiplicity $e_S$ (see \Cref{HS multiplicity}). Let $S \rightarrow R$ be a module finite $k$-algebra map to a finitely generated $\N$-graded $k$-algebra. Let $I \subseteq R$  be  a homogeneous ideal of finite co-length and $M$ be a finitely generated $\Z$-graded $R$-module.
Set
\begin{equation}\label{the sum defining twisted sum of Betti numbers}
\mathbb B^S(j, n) = \sum_{\alpha=0}^{\infty}
 (-1)^{\alpha} \,
 \lambda((\text{Tor}_{\alpha}^S(k, M/\bp{I}{n}M)_j). 
\end{equation}
 
Then
\begin{enumerate}
    \item ${\underset{n \to \infty}{\lim}(p^n)^{d-\text{dim}\, M}\sum \limits_{j=0}^{\infty} \, \mathbb{B}^S(j,n)\,e^{-iyj/p^n}}$ admits an analytic extension to the complex plane.
    \item The Frobenius-Poincar\'e function $F(M,R,I)(y)$ is the same as the analytic extension of the function \\ $\frac {e_S}{(iy)^{d}}\,\, 
{\underset{n \to \infty}{\lim}(p^n)^{d-\text{dim}\, M}\sum \limits_{j=0}^{\infty} \, \mathbb{B}^S(j,n)\,e^{-iyj/p^n}}$ to the complex plane.
\end{enumerate}
\end{theorem}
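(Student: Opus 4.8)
The plan is to relate $\mathbb{B}^S(j,n)$, an alternating sum of graded Betti numbers of $M/\bp{I}{n}M$ over $S$, to the Hilbert series of $M/\bp{I}{n}M$ itself, then pass to the limit using the already-established existence of $F(M,R,I)$. The key observation is that for a finitely generated $\Z$-graded module $N$ over the graded complete intersection $S$, the alternating sum $\sum_j \mathbb{B}^S(j,n) t^j = \sum_j \left(\sum_\alpha (-1)^\alpha b^S_N(\alpha, j)\right) t^j$ is, by the standard homological computation of Hilbert series from a graded free resolution, equal to $H_N(t) \cdot \prod_{\ell=1}^{s}(1 - t^{e_\ell})^{(-1)?}$... more precisely, if $S = k[X_1,\dots,X_s]/(f_1,\dots,f_h)$ with $\deg X_\ell = c_\ell$, then resolving $N$ over $S$ and tensoring with $k$ gives $\sum_{\alpha,j}(-1)^\alpha b^S_N(\alpha,j)t^j = H_N(t)\cdot \prod_{\ell=1}^{s}(1-t^{c_\ell})$, since $H_S(t) = \prod_\ell (1-t^{c_\ell})^{-1}\prod_{m}(1-t^{\deg f_m})$ and the Euler characteristic of the resolution recovers $H_N(t)/H_S(t)$ times the unit factor coming from the regular sequence. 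The upshot I want is a clean identity $\sum_j \mathbb{B}^S(j,n)\,t^j = H_{M/\bp{I}{n}M}(t)\cdot E_S(t)$ where $E_S(t) \in \Z[t]$ is a fixed polynomial depending only on $S$, with $E_S(1) \neq 0$ and in fact $E_S(t) = (1-t)^d \cdot U(t)$ for a rational function $U$ regular and nonzero at $t=1$ with $U(1) = 1/e_S$ — this is exactly the content of \Cref{rationality of Poincare series} and \Cref{Samuel multiplicity in terms of Hilbert-Poincare series} applied to $S$.

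Granting that identity, I substitute $t = e^{-iy/p^n}$ and multiply by the normalizing factor $(p^n)^{d - \dim M}$. On one side I get $(p^n)^{d-\dim M}\sum_j \mathbb{B}^S(j,n) e^{-iyj/p^n}$; on the other side I get $(p^n)^{d-\dim M} \cdot H_{M/\bp{I}{n}M}(e^{-iy/p^n}) \cdot E_S(e^{-iy/p^n})$. Now $(p^n)^{-\dim M} H_{M/\bp{I}{n}M}(e^{-iy/p^n}) = F_n(M,R,I,\dim M)(y)$ by definition \eqref{the sequence}, which converges uniformly on compacta to $F(M,R,I)(y)$ by \Cref{the main theorem}. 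Meanwhile $E_S(e^{-iy/p^n})$: writing $E_S(t) = (1-t)^d U(t)$, we have $(p^n)^d E_S(e^{-iy/p^n}) = \bigl(p^n(1-e^{-iy/p^n})\bigr)^d U(e^{-iy/p^n}) \to (iy)^d \cdot U(1) = (iy)^d/e_S$ uniformly on compacta by \Cref{the essential limit} and continuity of $U$ at $t=1$ (note $U$ has no pole there). Combining: $(p^n)^{d-\dim M}\sum_j \mathbb{B}^S(j,n)e^{-iyj/p^n} \to F(M,R,I)(y) \cdot (iy)^d/e_S$, uniformly on compact subsets of $\mathbb{C}$. Since the limit of a locally uniformly convergent sequence of entire functions is entire, this proves (1), and solving for $F(M,R,I)(y)$ gives (2): the limit function equals $\frac{e_S}{(iy)^d}F(M,R,I)(y)$ away from $y=0$, and since $F(M,R,I)$ is entire and the displayed limit is entire, $F(M,R,I)(y)$ agrees with the analytic extension of $\frac{e_S}{(iy)^d}$ times that limit everywhere.

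The main obstacle I anticipate is the first step: pinning down precisely the polynomial identity $\sum_j \mathbb{B}^S(j,n)t^j = H_{M/\bp{I}{n}M}(t)\cdot E_S(t)$, including convergence issues. The alternating sum defining $\mathbb{B}^S(j,n)$ is a priori an infinite sum over $\alpha$, so one must justify that it converges and that the formal manipulation $\sum_j(\sum_\alpha(-1)^\alpha b^S_N(\alpha,j))t^j = \sum_\alpha(-1)^\alpha(\sum_j b^S_N(\alpha,j)t^j) = \sum_\alpha(-1)^\alpha H_{S(-?)^{\oplus}\text{-pieces}}(t)$ rearranges correctly; this is exactly where \Cref{alternating sum of twists} (each coefficient $\mathbb{B}^S(j,n)$ is a finite sum since $b^S_N(\alpha,j)=0$ for $\alpha \geq j + l$) and \Cref{radius of conv of twisted Poincare series} (the Laurent series $\sum_j \mathbb{B}^S(j,n)t^j$ converges on a punctured disc, using the complete-intersection growth bound \Cref{growth in the complete intersection case}) come in to make the Euler-characteristic computation rigorous. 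One also needs that $M/\bp{I}{n}M$ has finite length over $R$ but only finitely generated over $S$ — but $S \to R$ is module-finite so $M/\bp{I}{n}M$ is a finitely generated $S$-module of the same (finite) length, so all $\mathrm{Tor}^S_\alpha(k, M/\bp{I}{n}M)$ are finite-dimensional and the Hilbert series $H_{M/\bp{I}{n}M}$ over $S$ coincides with that over $R$; this reconciliation should be stated carefully. Everything after the identity is a routine limit argument built on \Cref{the main theorem} and \Cref{the essential limit}.
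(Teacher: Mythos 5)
Your overall strategy is the same as the paper's: use the Euler-characteristic identity $\chi^S(M/\bp{I}{n}M,k)(t)=H_{M/\bp{I}{n}M}(t)/H_S(t)$ (the paper cites this as \Cref{AvrBuch}), substitute $t=e^{-iy/p^n}$, multiply by $(p^n)^{d-\dim M}$, and pass to the limit using \Cref{Samuel multiplicity in terms of Hilbert-Poincare series}, \Cref{the essential limit}, and the entirety of $F(M,R,I)$ from \Cref{the main theorem}. So the skeleton is right and you have identified all the relevant lemmas.

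There are two issues with the write-up, one cosmetic and one that actually breaks the argument as stated. The cosmetic one: $E_S(t)=1/H_S(t)$ is \emph{not} a polynomial in $\Z[t]$ when $S$ is a non-regular complete intersection; it is $\prod_\ell(1-t^{c_\ell})/\prod_m(1-t^{e_m})$, a rational function with possible poles at roots of unity other than $1$. Fortunately the only fact you use about $E_S$ is $(1-t)^{-d}E_S(t)\to 1/e_S$ as $t\to 1$, which holds, so this slip doesn't propagate.

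The genuine gap is the convergence domain. You conclude that $(p^n)^{d-\dim M}\sum_j\mathbb B^S(j,n)e^{-iyj/p^n}$ converges uniformly on compact subsets of \emph{all of} $\mathbb C$, and then invoke the fact that a locally uniform limit of entire functions is entire. But unless $S$ is regular, the minimal $S$-free resolution of $M/\bp{I}{n}M$ is infinite, so $\mathbb B^S(j,n)\ne 0$ for arbitrarily large $j$, and the series $\sum_{j\ge 0}\mathbb B^S(j,n)e^{-iyj/p^n}$ only converges when $|e^{-iy/p^n}|<1$, i.e.\ when $\operatorname{Im}(y)<0$ (this is exactly what \Cref{radius of conv of twisted Poincare series} gives you). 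So the pre-limit functions are holomorphic only on the open lower half-plane $\mathfrak H$, not entire, and your concluding sentence does not apply as written. The fix is what the paper does: prove the equality on $\mathfrak H$ (a connected open set on which everything converges), deduce that the limit on the left of (2) exists and is holomorphic on $\mathfrak H$, and then observe that since it agrees there with the restriction of the \emph{entire} function $(iy)^dF(M,R,I)(y)/e_S$, the desired analytic extension exists and the identity holds everywhere by the identity principle. You gesture at the right lemma when listing obstacles, but the final chain of reasoning as written would not survive scrutiny; the domain restriction and the subsequent analytic continuation step need to be made explicit.
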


Note that for fixed integers $j,n$ the sum in \eqref{the sum defining twisted sum of Betti numbers} is finite- see \Cref{alternating sum of twists}. We record some remarks and consequences related to \Cref{description in terms of Betti number} before proving the result. 
\begin{corollary}\label{the complete intersection case}
For a graded complete intersection $R$ over $k$  and a homogeneous ideal $I \subseteq R$ of finite colength, the function $\underset{n \to \infty}{\lim}\sum \limits_{j=0}^{\infty}\mathbb{B}^R(j,n)e^{-iyj/p^n}$ extends to the entire function $\frac{1}{e_R}(iy)^{\text{dim}(R)}F(R,R,I)(y)$.
\end{corollary}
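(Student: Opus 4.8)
The plan is to obtain the corollary as the special case $S=R$, $M=R$ of \Cref{description in terms of Betti number}. Since $R$ is assumed to be a graded complete intersection over $k$ (see \Cref{graded complete intersection}), one may apply \Cref{description in terms of Betti number} with $S=R$ and $S\to R$ the identity map; choosing $M=R$ gives $\dim M=\dim R=d$, so the normalizing power $(p^n)^{d-\dim M}$ is identically $1$ and $e_S=e_R$. Under these substitutions the quantity $\mathbb{B}^S(j,n)$ of \eqref{the sum defining twisted sum of Betti numbers} is precisely $\mathbb{B}^R(j,n)=\sum_{\alpha\geq 0}(-1)^{\alpha}\lambda(\text{Tor}_{\alpha}^R(k,R/\bp{I}{n})_j)$, matching the notation of the corollary.

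First I would invoke part (1) of \Cref{description in terms of Betti number} to conclude that $G(y):=\underset{n\to\infty}{\lim}\sum_{j=0}^{\infty}\mathbb{B}^R(j,n)e^{-iyj/p^n}$ — a priori defined only at those $y$ where the limit exists — extends to an entire function, which I again denote $G$. Then part (2) of the same theorem says that the Frobenius-Poincar\'e function $F(R,R,I)$ equals the analytic extension to $\mathbb{C}$ of the function $\frac{e_R}{(iy)^{d}}G(y)$.

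To conclude I would compare entire functions. On $\mathbb{C}\setminus\{0\}$, where both $G$ and $(iy)^{-d}$ are defined, part (2) gives the identity $(iy)^{d}F(R,R,I)(y)=e_R\,G(y)$. The left-hand side is the product of the entire function $F(R,R,I)$ with the polynomial $(iy)^{d}$, hence entire; the right-hand side is $e_R$ times the entire extension of $G$. Two entire functions that agree off the single point $0$ agree everywhere (by the identity theorem, or simply by continuity at $0$), so $G(y)=\frac{1}{e_R}(iy)^{d}F(R,R,I)(y)$ as entire functions, which is exactly the assertion. Division by $e_R$ is harmless, since $e_R$ is a positive integer, being the Hilbert-Samuel multiplicity of the nonzero ring $R$ (see \Cref{HS multiplicity}).

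I do not expect a genuine obstacle here: all the analytic and homological content is already packaged in \Cref{description in terms of Betti number}, and what remains is purely the bookkeeping around the phrase ``analytic extension'' — keeping track of the fact that multiplying by $(iy)^{d}$ and passing the limit inside are legitimate only away from the origin, and then upgrading the resulting identity on $\mathbb{C}\setminus\{0\}$ to an identity of entire functions on all of $\mathbb{C}$.
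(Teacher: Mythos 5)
Your proof is correct and is precisely the route the paper intends: the corollary is stated without a separate argument because it is the specialization $S=M=R$ of \Cref{description in terms of Betti number}, and your bookkeeping around the analytic extension (establishing the identity on an open set where the limit is known to converge and then upgrading to an identity of entire functions) is exactly the remaining content.
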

\begin{remark}
When applied to the triple $(R,R,m)$, where $m$ is the homogeneous maximal ideal of a graded complete intersection $R$ over $k$, \Cref{description in terms of Betti number} applied to the case $S=R=M$, gives a way to compare the Hilbert-Kunz multiplicity of $(R,m)$ to the Hilbert-Samuel multiplicity $e_R$.
\end{remark}
\begin{remark}\label{Betti numbers with respect to Noether normalization}
One way to apply \Cref{description in terms of Betti number} to describe the Frobenius-Poincar\'e function of a graded triple $(M,R,I)$ is to take $S$ to be a subring of $R$ generated by a homogeneous system of parameters. Since such an $S$ is regular, for any integer $n$, the sum defining $\mathbb{B}^S(j,n)$ in \eqref{the sum defining twisted sum of Betti numbers} is finite and  the function $\sum \limits_{j=0}^{\infty} \, \mathbb{B}^S(j,n)\,e^{-iyj/p^n}$ appearing in \Cref{description in terms of Betti number} is a polynomial in $e^{-iy/p^n}$.
\end{remark}
\begin{remark}\label{differentiability of density function}In \cite[page 7]{TriQuadric} V. Trivedi asks whether the Hilbert-Kunz density function (see \Cref{HK density function}) of a $d (\geq 2)$ dimensional standard graded pair $(R,I)$ is always $(d-2)$ times differentiable and the $(d-2)$-th order derivative is continuous. We use \Cref{description in terms of Betti number} to reformulate Trivedi's question and produce the candidate function for the $(d-2)$-th order derivative. Denote the restriction of $(iy)^{d-2}F(R,R,I)(y)$ to the real line by $h$. The Fourier transform of the \textit{temperate distribution} (see Definition 1.7.2, 1.7.3, \cite{Hormander}) defined by $h$ determines the $(d-2)$-th order \textit{derivative of the distribution} (see Definition 1.4.1, \cite{Hormander}) defined by the Hilbert-Kunz density function of $(R,I)$- see \Cref{comparison between FP function and HK density function}, Theorem 1.7.3, \cite{Hormander}. In fact using tools from analysis, one can show that Trivedi's question has an affirmative answer if the integral $\underset{\mathbb{R}}{\int}|h(x)|dx$ is finite\footnote{The argument is recorded in \cite[Theorem 8.3.8]{AlapanThesis}.}. When $\underset{\mathbb{R}}{\int}|h(x)|dx$ is finite, the Fourier transform of $h$ is in fact given by the actual function $\hat{h}(y)= \underset{\mathbb{R}}{\int}h(x)e^{-iyx}dx$ for $y \in \mathbb{R}$ and the $(d-2)$-th order derivative is the function $\frac{1}{2 \pi}\hat{h}(-y)$. Fix a subring $S \subseteq R $ generated by a homogeneous system of parameters of $R$. Since by \Cref{description in terms of Betti number} applied to the case $M=R$ (also see \Cref{Betti numbers with respect to Noether normalization}) $h(y)= e_S \underset{n \to \infty}{\lim} \frac{\sum \limits_{j=0}^{\infty}\mathbb{B}^S(j,n)e^{-iyj/p^n}}{(iy)^2}$, it is natural to ask
\begin{question}\label{boundedness of holomorphic function defined by Betti numbers}
\begin{enumerate}
    \item Is the function $ h(y)= \frac{\underset{n \to \infty}{\lim} \sum \limits_{j=0}^{\infty}\mathbb{B}^S(j,n)e^{-iyj/p^n}}{(iy)^2}$ integrable on $\mathbb{R}$?
    \item Is function $\underset{n \to \infty}{\lim} \sum \limits_{j=0}^{\infty}\mathbb{B}^S(j,n)e^{-iyj/p^n}$ restricted to the real line bounded?
\end{enumerate}
\end{question}
Note that an affirmative answer to part (2) implies an affirmative answer to part (1) of the \Cref{boundedness of holomorphic function defined by Betti numbers}.
\end{remark}
\medskip
We use a consequence of a result from \cite{AB}-where it is cited as a folklore- in the proof of \Cref{description in terms of Betti number} below.
\begin{proposition}\label{AvrBuch}(see Lemma 7.ii, \cite{AB})
Let $R$ be a finitely generated $\N$-graded $k$-algebra and $M,N$ be two finitely generated $\Z$-graded $R$-modules. Denote the formal Laurent series $\underset{i \in \N}{\sum} (-1)^i H_{\text{Tor}_i^R(M,N)}(t)$ by $\chi^R(M,N)(t)$. Then
$$\chi^R(M,N)(t)= \frac{H_M(t)H_N(t)}{H_R(t)},$$
where for a finitely generated $\Z$-graded $R$-module $N'$, $H_{N'}(t)$ is the Hilbert series of $N'$.
\end{proposition}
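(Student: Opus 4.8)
\textbf{Proof proposal for \Cref{AvrBuch}.} The plan is to resolve $M$ by a minimal graded free resolution and run an Euler-characteristic computation internal degree by internal degree. First I would fix a minimal graded free resolution $(F_\bullet, \partial_\bullet)$ of $M$ over $R$, so that $F_i \cong \bigoplus_{s \in \Z} R(-s)^{b^R_M(i,s)}$ and $\mathrm{Tor}_i^R(M,N) = H_i(F_\bullet \otimes_R N)$. Tensoring with $N$ gives $F_i \otimes_R N \cong \bigoplus_s N(-s)^{b^R_M(i,s)}$, hence $H_{F_i \otimes_R N}(t) = H_N(t)\,B_i(t)$ and likewise $H_{F_i}(t) = H_R(t)\,B_i(t)$, where $B_i(t):=\sum_s b^R_M(i,s)\,t^s$ is a Laurent polynomial because each $F_i$ is finitely generated.

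The essential bookkeeping step is to verify that every formal series in sight is well defined, i.e. that in each fixed internal degree only finitely many homological degrees contribute. Since $M$ and $N$ are finitely generated $\Z$-graded, they are bounded below; combining this with \Cref{alternating sum of twists}, which furnishes an $l$ with $b^R_M(i,s)=0$ for $i \geq s+l$, and with the fact that $R$ is $\N$-graded (so $R_{j-s}=0$ for $s>j$), one sees that for each $j$ the complex $(F_\bullet \otimes_R N)_j$ has only finitely many nonzero terms, each finite-dimensional over $k$; that the coefficient of each power of $t$ in $\sum_i (-1)^i B_i(t)$ is a finite sum; and that the products $H_R(t)\sum_i (-1)^i B_i(t)$ and $H_N(t)\sum_i (-1)^i B_i(t)$ are computed coefficientwise by finite sums. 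In particular $H_R(t)\in 1+t\,\Z[[t]]$ is a unit, so $H_M(t)H_N(t)/H_R(t)$ is a genuine formal Laurent series.

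Now I would apply, in each internal degree $j$, the elementary fact that for a bounded complex of finite-dimensional $k$-vector spaces the alternating sum of dimensions of the terms equals that of the homology. Applied to the exact complex $\cdots \to (F_1)_j \to (F_0)_j \to M_j \to 0$, this yields $\sum_i (-1)^i H_{F_i}(t) = H_M(t)$, i.e. $H_R(t)\sum_i (-1)^i B_i(t) = H_M(t)$, hence $\sum_i (-1)^i B_i(t) = H_M(t)/H_R(t)$. Applied to $(F_\bullet \otimes_R N)_j$, whose degree-$i$ homology is $\mathrm{Tor}_i^R(M,N)_j$, it yields $\chi^R(M,N)(t) = \sum_i (-1)^i H_{F_i \otimes_R N}(t) = H_N(t)\sum_i (-1)^i B_i(t)$. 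Combining the two gives $\chi^R(M,N)(t) = H_M(t)H_N(t)/H_R(t)$.

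The only real obstacle is the convergence and rearrangement bookkeeping for the infinite resolution; once \Cref{alternating sum of twists} and the boundedness of finitely generated graded modules are in hand, every manipulation collapses to a finite sum in each internal degree and the rest is routine. As a sanity check, the formula is manifestly symmetric in $M$ and $N$, consistent with $\mathrm{Tor}$ being balanced.
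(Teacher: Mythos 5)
Your argument is correct, but it is worth noting that the paper itself gives no proof of this statement: it is quoted as a folklore result with a pointer to Lemma 7.ii of \cite{AB}, and is then used as a black box in the proofs of \Cref{description in terms of Betti number} and \Cref{cohen-macaulay case}. What you have written is the standard self-contained justification: resolve $M$ minimally, observe $H_{F_i\otimes_R N}(t)=H_N(t)B_i(t)$ and $H_{F_i}(t)=H_R(t)B_i(t)$, and take Euler characteristics strand by strand, once against the exact augmented complex to get $H_R(t)\sum_i(-1)^iB_i(t)=H_M(t)$ and once against $F_\bullet\otimes_R N$ to get $\chi^R(M,N)(t)=H_N(t)\sum_i(-1)^iB_i(t)$. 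The genuinely nontrivial point, which you handle correctly, is the degreewise finiteness making each coefficient a finite sum: this is exactly where minimality of the resolution enters, via \Cref{alternating sum of twists} (generator degrees strictly increase, so $b^R_M(i,s)=0$ for $i\geq s+l$) together with the fact that $M$, $N$ are bounded below and $R$ is $\N$-graded with $R_0=k$, which also makes $H_R(t)$ a unit in $\Z[[t]]$ so that division by $H_R(t)$ is legitimate in the ring of formal Laurent series. A non-minimal resolution could fail this strand-boundedness, so your insistence on minimality is not cosmetic. The payoff of your route is that the paper would no longer need the external citation for this step; the cost is only the bookkeeping you already carried out.
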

\begin{proof}[\textbf{Proof of Theorem \ref{description in terms of Betti number}:}] Let $\mathfrak{H}$ be the set of complex numbers with a negative imaginary part. We shall prove that on the connected open subset $\mathfrak{H}$ of the complex plane $$\frac {e_S}{(iy)^{d}}\,\, 
{\underset{n \to \infty}{\lim}(p^n)^{d-\text{dim}\, M}\sum \limits_{j=0}^{\infty} \, \mathbb{B}^S(j,n)\,e^{-iyj/p^n}}$$ defines a holomorphic function and is the same as the restriction of $F(R,R,I)$ to $\mathfrak{H}$. Since $F(R,R,I)$ is an entire function, the analytic continuity in assertion 1 and the desired equality in assertion 2 follows.

Given an integer $n$, $\chi^S(\frac{M}{\bp{I}{n}M}, k)(t)= \sum \limits_{j= - \infty}^{\infty}\mathbb{B}^S(j,n)t^j$. So using \Cref{AvrBuch} we get
\begin{equation}\label{AB in the complete intersection case}
H_{\frac{M}{\bp{I}{n}M}}(t)= H_S(t)(\sum \limits_{j= - \infty}^{\infty}\mathbb{B}^S(j,n)t^j).
\end{equation}
Now for any $y \in \mathfrak{H}$, $|e^{-iy/p^n}|<1$; so by \Cref{radius of conv of twisted Poincare series}, the series $\underset{j \in \Z}{\sum} \mathbb{B}^S(j,n)(e^{-iy/p^n})^j$ converges absolutely. For $y \in \mathfrak{H}$, plugging in $t= e^{-iy/p^n}$ in \eqref{AB in the complete intersection case}, we get
\begin{equation}\label{holomorphic version of description in terms of bett numbers}
\begin{split}
(\frac{1}{p^{n}})^{\text{dim}(M)}H_{\frac{M}{\bp{I}{n}M}}(e^{-iy/p^n}) &= (\frac{1}{p^{n}})^{d}H_S(e^{-iy/p^n})(p^n)^{d-\text{dim}(M)}(\sum \limits_{j= - \infty}^{\infty}\mathbb{B}^S(j,n)e^{-iyj/p^n})\\
&= \frac{(1-e^{-iy/p^n})^d}{(p^n(1-e^{-iy/p^n}))^d}H_S(e^{-iy/p^n})(p^n)^{d-\text{dim}(M)}(\sum \limits_{j= - \infty}^{\infty}\mathbb{B}^S(j,n)e^{-iyj/p^n}) \ .
\end{split}
\end{equation}
For a fixed $y \in \mathfrak{H}$, as $n$ approaches infinity, $(1-e^{-iy/p^n})^dH_S(e^{-iy/p^n})$ approaches $e_S$ (see \Cref{Samuel multiplicity in terms of Hilbert-Poincare series}) and $(p^n(1-e^{-iy/p^n}))^d$ approaches $(iy)^d$(see \Cref{the essential limit}).
Now taking limit as $n$ approaches infinity in \eqref{holomorphic version of description in terms of bett numbers} gives the following equality on $\mathfrak{H}$:
$$F(R,R,I)(y)= \frac {e_S}{(iy)^{d}}\,\, 
{\underset{n \to \infty}{\lim}(p^n)^{d-\text{dim}\, M}\sum \limits_{j=0}^{\infty} \, \mathbb{B}^S(j,n)\,e^{-iyj/p^n}}.$$
Since the left hand side of the last equation is holomorphic on $\mathfrak{H}$, so is the right hand side; this finishes the proof.
\end{proof}
\vspace{.5cm}

\begin{remark}
Take $S=R=M$ in \Cref{description in terms of Betti number} and let $\mathfrak{H}$ be the same as in the proof of \Cref{description in terms of Betti number}. Although the analyticity of $\sum \limits_{j=0}^{\infty}\mathbb{B}(j,n)e^{-iyj/p^n}$ on $\mathfrak{H}$, for each $n$,
follows from \Cref{radius of conv of twisted Poincare series}, the existence of the analytic extension of their limit crucially depends on \Cref{description in terms of Betti number} and that Frobenius-Poincar\'e functions are entire.
\end{remark}
\medskip
When the $R$-module $R/I$ has finite projective dimension, the line of argument in \Cref{description in terms of Betti number} (also see \cite{TriDomain}) allows to describe $F(R,R,I)$ in terms of the graded Betti numbers of $R/I$.
\begin{proposition}
\label{finite projective dimesnion case}
Let $I$ be a homogeneous ideal of the $d$ dimensional ring $R$, such that the projective dimension of the $R$-module $R/I$ is finite. Set
$$b_{\alpha,j}= \lambda(\text{Tor}_{\alpha}^R(k, R/I)_j), \hspace{.25cm} \mathbb{B}(j)= \sum \limits_{\alpha=0}^{\infty}(-1)^{\alpha}b_{\alpha,j}, \hspace{.25cm} e_R = \text{Hilbert-Samuel multiplicity of}\, R.$$
 Let $b$ be the smallest integer such that $\mathbb{B}(j)=0$ for all $j >b$. Then for a non-zero complex number $y$, we have:
$$F(R,R,I)(y)= e_R \ \frac{\sum \limits_{j=0}^{b}\mathbb{B}(j)e^{-iyj}}{(iy)^d}.$$
\end{proposition}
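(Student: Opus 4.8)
The plan is to run the argument of \Cref{description in terms of Betti number} with $S=R$, using the finiteness of $\operatorname{pd}_R(R/I)$ to build, from a single finite free resolution of $R/I$, a free resolution of $R/\bp{I}{n}$ for \emph{every} $n$ at once.

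First I would fix a minimal graded free resolution $0\to G_s\to\cdots\to G_1\xrightarrow{d_1}G_0=R\to R/I\to 0$ with $G_\alpha\cong\bigoplus_j R(-j)^{b_{\alpha,j}}$; it is finite because $\operatorname{pd}_R(R/I)<\infty$, so $\mathbb B(j)=\sum_\alpha(-1)^\alpha b_{\alpha,j}$ is a finite sum, vanishing for $j<0$ and for $j>b$. The essential input is the theorem of Peskine--Szpiro: the Frobenius functor $\mathfrak{F}^n$ (see \Cref{bounded growth}) carries a finite free resolution of a module of finite projective dimension to a free resolution of its Frobenius pushforward. Since $\mathfrak{F}^n$ raises each homogeneous matrix entry of degree $e$ to its $p^n$-th power (of degree $ep^n$), it sends $R(-j)$ to $R(-jp^n)$; hence $\mathfrak{F}^n(G_\alpha)\cong\bigoplus_j R(-jp^n)^{b_{\alpha,j}}$, the cokernel of $\mathfrak{F}^n(d_1)$ is the ideal generated by the $p^n$-th powers of a generating set of $I$ (that is, $\bp{I}{n}$), and $\mathfrak{F}^n(G_\bullet)$ is a free resolution of $R/\bp{I}{n}$. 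Summing Hilbert series alternately along this exact complex gives
\begin{equation*}
H_{R/\bp{I}{n}}(t)=\sum_{\alpha=0}^{s}(-1)^\alpha H_{\mathfrak{F}^n(G_\alpha)}(t)=H_R(t)\sum_{j=0}^{b}\mathbb B(j)\,t^{jp^n};
\end{equation*}
the same identity also follows from \Cref{AvrBuch}, exactly as in the proof of \Cref{description in terms of Betti number}.

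Next, recalling from \eqref{the sequence} that $F_n(R,R,I)(y)=(1/p^n)^d H_{R/\bp{I}{n}}(e^{-iy/p^n})$ with $d=\dim R$, I would substitute $t=e^{-iy/p^n}$; then $\sum_j\mathbb B(j)t^{jp^n}$ becomes $\sum_{j=0}^b\mathbb B(j)e^{-iyj}$, which no longer depends on $n$, so for all large $n$
\begin{equation*}
F_n(R,R,I)(y)=\frac{(1-e^{-iy/p^n})^d\,H_R(e^{-iy/p^n})}{\bigl(p^n(1-e^{-iy/p^n})\bigr)^d}\,\sum_{j=0}^{b}\mathbb B(j)\,e^{-iyj}.
\end{equation*}
For a fixed $y\neq 0$, as $n\to\infty$ we have $e^{-iy/p^n}\to 1$, so by \Cref{Samuel multiplicity in terms of Hilbert-Poincare series} the numerator of the prefactor tends to $e_R$ and by \Cref{the essential limit} the denominator tends to $(iy)^d$; since $(iy)^d\neq 0$, taking the limit and using \Cref{the main theorem} yields $F(R,R,I)(y)=e_R\,(iy)^{-d}\sum_{j=0}^{b}\mathbb B(j)e^{-iyj}$, as claimed.

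The only genuinely nontrivial step --- and the main obstacle --- is the appeal to Peskine--Szpiro: this is exactly where $\operatorname{pd}_R(R/I)<\infty$ is used (over a general graded ring, applying $\mathfrak{F}^n$ to a resolution of $R/I$ need not remain exact, and then the product formula for $H_{R/\bp{I}{n}}(t)$ fails), and it requires care with the grading so that every twist is scaled by exactly $p^n$. Everything after that is the same limit computation as in \Cref{description in terms of Betti number}.
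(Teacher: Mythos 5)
Your proposal is correct and follows essentially the same route as the paper's proof: both apply the Frobenius functor to a finite minimal free resolution of $R/I$ (via Peskine--Szpiro) to get a resolution of $R/\bp{I}{n}$ whose twists are scaled by $p^n$, express $H_{R/\bp{I}{n}}(t)$ as $H_R(t)\sum_j\mathbb B(j)t^{jp^n}$ (which, as you note, is just \Cref{AvrBuch}), substitute $t=e^{-iy/p^n}$, and take limits using \Cref{Samuel multiplicity in terms of Hilbert-Poincare series} and \Cref{the essential limit}. The only slips are cosmetic: the cokernel of $\mathfrak{F}^n(d_1)$ is $R/\bp{I}{n}$, not the ideal $\bp{I}{n}$ itself, and the displayed formula for $F_n$ holds for all $n$, not just large $n$.
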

\begin{proof}
Take a minimal graded free resolution of the $R$-module $R/I$: $$
\begin{tikzcd}
0 \arrow[r] & \underset{j \in \N}{\oplus}R(-j)^{\oplus b_{d,j}} \arrow[r] & {} & \ldots \arrow[r] & \underset{j \in \N}{\oplus}R(-j)^{\oplus b_{1,j}} \arrow[rr] &  & \underset{j \in \N}{\oplus}R(-j)^{\oplus b_{0,j}} \arrow[r] & R/I \arrow[r] & 0 .
\end{tikzcd}$$
Then we get a minimal graded free resolution of $R/\bp{I}{n}$ by applying $n$-th iteration of the Frobenius functor to the chosen minimal graded resolution of $R/I$ (see Theorem 1.13 of \cite{PS}),
$$
\begin{tikzcd}
0 \arrow[r] & \underset{j \in \N}{\oplus}R(-p^nj)^{\oplus b_{d,j}} \arrow[r] & {} & \ldots \arrow[r] & \underset{j \in \N}{\oplus}R(-p^nj)^{\oplus b_{0,j}} \arrow[r] & {R/\bp{I}{n}} \arrow[r] & 0 .
\end{tikzcd}$$
So using the notation set in \eqref{the sum defining twisted sum of Betti numbers} in the case $S=R=M$ and the ideal $I$, we have that for any positive integer $n$, $\mathbb{B}^R(jp^n,n)= \mathbb{B}(j)$ and $\mathbb{B}(m,n)=0$ if $p^n$ does not divide $m$.
So for all $n \in \N$, $\chi^R(\frac{R}{\bp{I}{n}},k)(t)=\underset{j \in \N}{\sum}\mathbb{B}^R(j,n)t^j= \sum \limits_{j=0}^{b}\mathbb{B}(j)t^{jp^n}$ is a polynomial in $t$. So for any $n\in \N$, using \Cref{AvrBuch} for $M= R/ \bp{I}{n}$, $N= k$ we have for all $y \in \mathbb{C}$,
\begin{equation}\label{equation finite pd case}
(\frac{1}{p^n})^d H_{\frac{R}{\bp{I}{n}}}(e^{-iy/p^n})= \frac{(1-e^{-iy/p^n})^dH_R(e^{-iy/p^n})}{(p^n(1-e^{-iy/p^n}))^d}(\sum \limits_{j=0}^{b}\mathbb{B}(j)e^{-iyj}).    
\end{equation}
Now taking limit as $n$ approaches infinity in \eqref{equation finite pd case} and using \Cref{Samuel multiplicity in terms of Hilbert-Poincare series} and \Cref{the essential limit}, we get
$$F(R,R,I)(y)= e_R \frac{\sum \limits_{j=0}^b \mathbb{B}(j)e^{-iyj}}{(iy)^d}.$$
\end{proof}
\begin{example}\label{fine structure}
Let $R= k[X,Y]$ be the standard graded polynomial ring in two variables, $I= (f,g)R$ where $f$ and $g$ have degree $d_1$ and $d_2$ respectively. Using \Cref{finite projective dimesnion case}, we can compute $F(R,I)(y)$. A minimal free resolution of $R/I$ is given by the Koszul complex of $(f,g)$:
$$0 \rightarrow R(-d_1-d_2) \rightarrow R(-d_1) \oplus R(-d_2) \rightarrow R \rightarrow 0 .$$
Hence we get,
\begin{equation}
\begin{split}
F(R,I)(y) &= \frac{\mathbb{B}(0)+ \mathbb{B}(d_1)e^{-iyd_1}+      \mathbb{B}(d_2)e^{-iyd_2}+ \mathbb{B}(d_1+d_2)e^{-iy(d_1+d_2)}}{(iy)^2} \\
          &= \frac{1-e^{-iyd_1}- e^{-iyd_2}+ e^{-iy(d_1+d_2)}}{(iy)^2}.
\end{split}
\end{equation}
The Hilbert-Kunz multiplicity $e_{HK}(R,I)= d_1d_2$ (see for example Theorem 11.2.10 of \cite{HunekeSwanson}). Using this observation, we can construct finite co-length ideals $I$ and $J$ in $R$ such that, $e_{HK}(R,I)= e_{HK}(R,J)$ but $F(R,R,I)$ and $F(R,R,J)$ are different.
\end{example}

\noindent In the next result, we show that the Frobenius-Poincar\'e function of a Cohen-Macaulay ring can be described in terms of the sequence of Koszul homologies of $\frac{R}{\bp{I}{n}}$ with respect to a homogeneous system of parameters. 
\begin{theorem}\label{cohen-macaulay case}
Let $R$ be a Cohen-Macaulay $\N$-graded ring of dimension $d$, $I$ be a homogeneous ideal of finite co-length of $R$. Let $x_1, x_2, \ldots, x_d$ be a homogeneous system of parameters of $R$ of degree $\delta_1, \ldots, \delta_d$ respectively. Then
$$F(R,I)(y)= \frac{1}{\delta_1 \delta_2 \ldots \delta_d(iy)^d} \lim_{n \to \infty} \chi^R(\frac{R}{(x_1, \ldots,x_d)R}, \frac{R}{\bp{I}{n}R}) (e^{-iy/p^n}),$$
where $\chi^R(\frac{R}{(x_1, \ldots,x_d)R}, \frac{R}{\bp{I}{n}R}) (t)$ has the same meaning as in \Cref{AvrBuch}.
\end{theorem}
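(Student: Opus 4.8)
The plan is to reduce the statement to the Avramov--Buchsbaum formula \Cref{AvrBuch} together with the two limits already in hand, namely \Cref{the main theorem} and \Cref{the essential limit}. Write $\underline{x}=x_1,\dots,x_d$ and let $K_\bullet$ be the Koszul complex on $\underline{x}$ over $R$. Since $R$ is Cohen--Macaulay of dimension $d$ and $\underline{x}$ is a homogeneous system of parameters, $\underline{x}$ is a regular sequence on $R$ (a system of parameters in a Cohen--Macaulay ring is a regular sequence; see \cite{BH}); hence $K_\bullet$ is a finite graded free resolution of $R/(\underline{x})R$. In particular $R/(\underline{x})R$ has finite length and finite projective dimension over $R$, so for every $n$ the Euler characteristic $\chi^R\!\big(R/(\underline{x})R,\,R/\bp{I}{n}R\big)(t)$ is a genuine Laurent polynomial and may be evaluated at any complex number of the form $e^{-iy/p^n}$. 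Taking the alternating sum of Hilbert series along the terms of $K_\bullet$ gives
$$H_{R/(\underline{x})R}(t)=H_R(t)\prod_{j=1}^{d}\big(1-t^{\delta_j}\big).$$

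Next I would apply \Cref{AvrBuch} with $M=R/(\underline{x})R$ and $N=R/\bp{I}{n}R$, obtaining
$$\chi^R\!\big(R/(\underline{x})R,\,R/\bp{I}{n}R\big)(t)=\frac{H_{R/(\underline{x})R}(t)\,H_{R/\bp{I}{n}R}(t)}{H_R(t)}=H_{R/\bp{I}{n}R}(t)\prod_{j=1}^{d}\big(1-t^{\delta_j}\big).$$
Substituting $t=e^{-iy/p^n}$, writing $1-e^{-i\delta_j y/p^n}=\frac{1}{p^n}\,p^n\big(1-e^{-i\delta_j y/p^n}\big)$, and dividing by $\delta_1\cdots\delta_d(iy)^d=\prod_{j=1}^d(i\delta_j y)$, the right-hand side becomes
$$\left(\frac{1}{p^n}\right)^{d}H_{R/\bp{I}{n}R}\!\big(e^{-iy/p^n}\big)\cdot\prod_{j=1}^{d}\frac{p^n\big(1-e^{-i\delta_j y/p^n}\big)}{i\delta_j y}=F_n(R,R,I,d)(y)\cdot\prod_{j=1}^{d}\frac{p^n\big(1-e^{-i\delta_j y/p^n}\big)}{i\delta_j y},$$
the last equality being just the definition of $F_n$ in \eqref{the sequence} (recall $d=\dim R$).

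Finally I would let $n\to\infty$ for a fixed nonzero $y$. By \Cref{the main theorem}, $F_n(R,R,I,d)(y)\to F(R,R,I)(y)$; by \Cref{the essential limit}, $p^n\big(1-e^{-i\delta_j y/p^n}\big)\to i\delta_j y$, so each of the $d$ factors in the product tends to $1$. Hence $\frac{1}{\delta_1\cdots\delta_d(iy)^d}\,\chi^R\!\big(R/(\underline{x})R,\,R/\bp{I}{n}R\big)(e^{-iy/p^n})\to F(R,R,I)(y)$, which rearranges to the asserted identity; the point $y=0$ needs no separate treatment since $F(R,R,I)$ is entire and the apparent pole of the right-hand side at the origin is removable.

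I do not anticipate a genuine obstacle. The only real content is the observation in the first paragraph that the Cohen--Macaulay hypothesis is exactly what collapses $\chi^R(R/(\underline{x})R,-)$ to multiplication by $\prod_{j=1}^d(1-t^{\delta_j})$ --- equivalently, that $\underline{x}$ being a regular sequence kills the higher Koszul homology of $R$ --- after which the result is the bookkeeping packaged into \Cref{AvrBuch}, \Cref{the main theorem}, and \Cref{the essential limit}. The one point to handle with care is that \Cref{AvrBuch} is an identity of formal Laurent series, so one must record (via the finite free resolution $K_\bullet$) that both sides are in fact Laurent polynomials before substituting $t=e^{-iy/p^n}$.
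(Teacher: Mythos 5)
Your proposal is correct and follows essentially the same route as the paper's proof: apply \Cref{AvrBuch} to $(R/(\underline{x})R,\,R/\bp{I}{n}R)$, use the regular-sequence property of $\underline{x}$ (from Cohen--Macaulayness) to write $H_{R/(\underline{x})R}(t)=H_R(t)\prod_{j}(1-t^{\delta_j})$, substitute $t=e^{-iy/p^n}$, and pass to the limit via \Cref{the essential limit} and \Cref{the main theorem}. The only cosmetic difference is that you obtain the Hilbert series identity from the Koszul resolution (and explicitly record that $\chi^R$ is a Laurent polynomial before evaluating), whereas the paper gets it by induction on $d$.
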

\noindent\begin{remark}
The Laurent series $\chi^R(M,N)(t)$ for a pair of graded modules as defined in \Cref{AvrBuch} has been used before to define multiplicity or intersection multiplicity in different contexts; see for example \cite{Serre} Chapter IV, A, Theorem 1 and \cite{Erman}. The assertion in \Cref{cohen-macaulay case} should be thought of as an analogue of the these results since here the Frobenius-Poincar\'e function and hence the Hilbert-Kunz multiplicity is expressed in terms of the limit of power series $\chi^R(\frac{R}{\bp{I}{n}},\frac{R}{(\underline{x})R})(t)$. 
\end{remark}
\begin{proof}[\textbf{Proof of \Cref{cohen-macaulay case}:}]
Using \Cref{AvrBuch} we have,
\begin{equation}\label{applying avr in the Cohen-Macaulay case}
    H_{\frac{R}{\bp{I}{n}}}(e^{-iy/p^n})H_{\frac{R}{(x_1, \ldots, x_d)}}(e^{-iy/p^n})= H_R(e^{-iy/p^n})\chi^R(\frac{R}{(x_1, \ldots,x_d)R}, \frac{R}{\bp{I}{n}R}) (e^{-iy/p^n}) \ .
\end{equation}
Since $R$ is Cohen-Macaulay, $x_1, \ldots, x_d$ is a regular sequence. Inducing on $d$, one can show that,
\begin{equation}\label{hilbert series modulo a nzd}
H_{\frac{R}{(x_1, \ldots, x_d)}}(t)= (1-t^{\delta_1})(1-t^{\delta_2})\ldots (1-t^{\delta_d})H_R(t) \ .
\end{equation}
Using \eqref{hilbert series modulo a nzd} in \eqref{applying avr in the Cohen-Macaulay case} we get
$$(\frac{1}{p^n})^dH_{R/\bp{I}{n}}(e^{-iy/p^n})= \frac{\chi^R(\frac{R}{(x_1, \ldots,x_d)R}, \frac{R}{\bp{I}{n}R}) (e^{-iy/p^n})}{(p^n)^d (1-e^{-iy\delta_1/p^n})\ldots (1-e^{-iy\delta_d/p^n})}.$$
The desired assertion now follows from taking limit as $n$ approaching infinity in the last equation and using \Cref{the essential limit}.
\end{proof}
In each of \Cref{description in terms of Betti number}, \Cref{finite projective dimesnion case}, \Cref{cohen-macaulay case}, the Frobenius-Poincar\'e function is described as a quotient: the denominator is a power of $iy$ and the numerator is a limit of a sequence of power series or polynomials in $e^{-iy/p^n}$. In particular, in \Cref{cohen-macaulay case}, the maximum value of $j/p^n$ that appears in $e^{-iyj/p^n}$ in the sequence of functions is bounded above by a constant independent of $n$- see \Cref{support}. So we ask
\begin{question} \label{the conjectural structure} 

Let $R$ be a Cohen-Macaulay $\N$-graded ring of dimension $d$, $I$ is a homogeneous ideal of finite co-length. Does there exist a real number $r$ and a polynomial $Q \in \mathbb{R}[X]$ such that
$$F(R,R,I)(y)= \frac{Q(e^{-iry})}{(iy)^d} ?$$
\end{question}
\section{Frobenius-Poincar\'e functions in dimension two}\label{in dimension two}
We compute the Frobenius-Poincar\'e function of two dimensional graded rings following the work of \cite{Brrationality} and \cite{TriCurves}. In this section, $R$ stands for a normal, two dimensional, standard graded domain. We assume that $R_0=k$ is an algebraically closed field of prime characteristic $p$. The smooth embedded curve $\text{Proj}(R)$ is denoted by $C$, $\delta_R$ stands for the Hilbert-Samuel multiplicity of $R$; alternatively $\delta_R$ is the degree of the line bundle $\mathcal{O}_C(1)$. The genus of $C$ is denoted by $g$. For a sheaf of $\mathcal{O}_C$-modules $\F$, $\F(j)$ stands for the sheaf $\F \otimes \mathcal{O}_C(j)$. The absolute Frobenius endomorphism of $C$ is denoted by $f$. Some background materials for this section are reviewed in \Cref{vector bundles on curve}.
\begin{theorem}\label{statement in the curve case}
With notation as in the paragraph above, let $I$ be an ideal of finite colength in $R$ generated by degree one elements $h_1, h_2, \ldots, h_r$. Consider the short exact sequence of vector bundles on $C= Proj(R)$
\begin{equation}\label{short exact sequence on curve}
0 \longrightarrow \mathcal {S} \longrightarrow \overset{r}{\bigoplus} \mathcal {O}_C \xrightarrow{(h_1, \ldots, h_r)} \mathcal{O}_C(1) \longrightarrow 0 \ .
\end{equation}
Choose $n_0$ such that the Harder-Narasimhan filtration on $f^{n_0*}(\mathcal{S})$ given by
\begin{equation}
 0=\E_0 \subset \E_1 \subset \ldots \subset \E_t \subset \E_{t+1}= f^{n_0*}\mathcal{S} \
\end{equation}
is strong \footnote{that is the pull back of the Harder-Narasimhan filtration on $(f^{n_0})^*\mathcal{S}$ via $f^{n-n_0}$ gives the the Harder-Narasimhan filtration on $(f^n)^*\mathcal{S}$- see \Cref{HN filtration}, \Cref{Langer}.}.
For any $1\leq s \leq t+1$, set $\mu_s$ to be the normalized slope $\frac{\mu(\E_s/\E_{s-1})}{p^{n_0}}$ of the factor   $\E_s/\E_{s-1}$  and set $r_s$ to be its rank $\text{rk}(\E_s/\E_{s-1})$ (see \Cref{subbundle}).
Then 
\begin{equation}\label{curve}
    F(R, I)(y)= \delta_R\frac{1-(1+ \text{rk}(\mathcal{S}))e^{-iy}+ \sum \limits_{j=1}^{t+1}r_je^{-iy(1-\frac{\mu_j}{\delta_R})}}{(iy)^2} \ .
\end{equation}
\end{theorem}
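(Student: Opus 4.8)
The plan is to compute $F(R,I)$ as the holomorphic Fourier transform of the Hilbert--Kunz density function of $(R,R,I)$ via \Cref{FP function is fourier transform of density function}, obtaining that density function by translating the graded pieces of $R/\bp{I}{n}$ into cohomology on $C$ and then inserting the strong Harder--Narasimhan filtration of a sufficiently high Frobenius pullback of $\mathcal{S}$.

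\emph{Geometric translation.} Fix $q=p^{n}$. As $I$ --- hence $\bp{I}{n}=(h_1^{q},\dots,h_r^{q})$ --- has finite colength, sheafifying $R(-q)^{r}\xrightarrow{(h_i^{q})}R\to R/\bp{I}{n}\to 0$ on $C$ gives a short exact sequence of vector bundles $0\to\widetilde{Z_q}\to\mathcal{O}_C(-q)^{r}\to\mathcal{O}_C\to 0$, where $Z_q$ is the first syzygy module of $\bp{I}{n}$; applying the flat functor $(f^{n})^{*}$ to \eqref{short exact sequence on curve} and using that the absolute Frobenius sends a line bundle to its $p$-th tensor power and the section $h_i$ of $\mathcal{O}_C(1)$ to $h_i^{q}$ identifies $\widetilde{Z_q}\cong (f^{n})^{*}(\mathcal{S})\otimes\mathcal{O}_C(-q)$. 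Since $R$ is normal, $R=\Gamma_{*}(\mathcal{O}_C)$, and since $Z_q$ is a second syzygy over the normal domain $R$ it is reflexive, so $Z_q=\Gamma_{*}(\widetilde{Z_q})$; hence for every integer $m$
\[
\lambda_k\big((R/\bp{I}{n})_m\big)=h^{0}\big(\mathcal{O}_C(m)\big)-r\,h^{0}\big(\mathcal{O}_C(m-q)\big)+h^{0}\big((f^{n})^{*}(\mathcal{S})(m-q)\big),
\]
and by \Cref{support} the left side vanishes for $m$ outside a window $[0,Cq]$ independent of $n$.

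\emph{The density function.} By \Cref{HK density function} the rescaled functions $g_n(x)=\tfrac1q\,\lambda_k\big((R/\bp{I}{n})_{\lfloor xq\rfloor}\big)$ converge uniformly to a continuous, compactly supported $g$, and by \Cref{FP function is fourier transform of density function} we have $F(R,I)=\hat g$, so it suffices to find $g$. Riemann--Roch gives $\tfrac1q\,h^{0}(\mathcal{O}_C(\lfloor xq\rfloor))\to\delta_R x_{+}$ and $\tfrac1q\,h^{0}(\mathcal{O}_C(\lfloor xq\rfloor-q))\to\delta_R(x-1)_{+}$. For the third term, by \Cref{Langer} the bundle $(f^{n})^{*}(\mathcal{S})(\lfloor xq\rfloor-q)$ carries, for $n\ge n_0$, a strong HN filtration whose $s$-th factor is semistable of rank $r_s$ and slope $q\mu_s+(\lfloor xq\rfloor-q)\delta_R$ (twisting by a line bundle preserves semistability and the HN filtration, \Cref{properties of HN filtration}). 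Grouping the factors by the sign of the limiting slope $\mu_s+(x-1)\delta_R$: the quotient bundle spanned by the negative ones has no global sections by \Cref{minimum slope negative implies no global sections}, while the subbundle spanned by the positive ones has vanishing $H^{1}$ for $n$ large --- its HN slopes tend to $+\infty$, and the Serre dual of a semistable bundle of slope $>2g-2$ is semistable of negative slope, hence has no sections (\Cref{filtration on the dual}, \Cref{properties of HN filtration}, \Cref{minimum slope negative implies no global sections}) --- so its $h^{0}$ equals its Euler characteristic. Consequently
\[
\tfrac1q\,h^{0}\big((f^{n})^{*}(\mathcal{S})(\lfloor xq\rfloor-q)\big)\ \longrightarrow\ \sum_{s=1}^{t+1}r_s\big(\mu_s+(x-1)\delta_R\big)_{+}=\delta_R\sum_{s=1}^{t+1}r_s\big(x-a_s\big)_{+},\qquad a_s:=1-\tfrac{\mu_s}{\delta_R},
\]
so $g(x)=\delta_R\big(x_{+}-r(x-1)_{+}+\sum_{s}r_s(x-a_s)_{+}\big)$.

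\emph{Conclusion and main obstacle.} Since $\text{rk}(\mathcal{S})=r-1$, this $g$ is piecewise linear with distributional second derivative $g''=\delta_R\big(\delta_0-(1+\text{rk}(\mathcal{S}))\delta_1+\sum_{s}r_s\delta_{a_s}\big)$; as the Fourier transform of $\delta_c$ is $e^{-icy}$ and $\widehat{g''}(y)=(iy)^{2}\hat g(y)$, we get $(iy)^{2}F(R,I)(y)=\delta_R\big(1-(1+\text{rk}(\mathcal{S}))e^{-iy}+\sum_{s}r_s e^{-iy a_s}\big)$, which is precisely \eqref{curve}; the identities $\sum_{s}r_s=\text{rk}(\mathcal{S})$ and $\sum_{s}r_s\mu_s=\deg(\mathcal{S})=-\delta_R$ make the singularity at $y=0$ removable, as \Cref{the main theorem} demands. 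I expect the main obstacle to be the slope analysis of the third step: $h^{0}$ is not additive along the HN filtration and the $H^{1}$ contributions of the twisted pullback do not become negligible after dividing by $q$, so one must genuinely use the \emph{strongness} of the filtration to split it at the sign of the slope and treat each half. Establishing the exact cohomological identity of the second step for \emph{all} $m$ (not merely $m\gg0$) --- which is what normality of $R$ and reflexivity of $Z_q$ provide, and which lets the density-function argument run without error terms --- is the other point requiring care.
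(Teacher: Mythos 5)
Your proposal is correct, and it takes a genuinely different route from the paper's. The paper computes $\underset{n \to \infty}{\lim}\left(\tfrac{1}{p^n}\right)^2 \sum_j \lambda\!\left((R/\bp{I}{n})_j\right)e^{-iyj/p^n}$ directly: it uses the identity $\lambda\!\left((R/\bp{I}{n})_{p^n+j}\right)=h^1\!\left(((f^n)^*\mathcal{S})(j)\right)$ for $j>2g-2$ (\Cref{length in terms of cohomologies lemma}), computes that $h^1$ on the ranges of $j$ cut out by the normalized slopes via Serre duality and the HN filtration of the dual (\Cref{cohomologies in terms of HN filtration}), splits the sum defining $F_n$ into the corresponding ranges, and then carries out a fairly involved bookkeeping of geometric sums and derivatives (\Cref{computation of limits of parts}). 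You instead first pin down the Hilbert–Kunz density function $g$ of $(R,R,I)$ as an explicit piecewise-linear, compactly supported function via the exact cohomological identity $\lambda_k\!\left((R/\bp{I}{n})_m\right)=h^0(\mathcal{O}_C(m))-r\,h^0(\mathcal{O}_C(m-q))+h^0\!\left((f^n)^*(\mathcal{S})(m-q)\right)$ (which works for \emph{all} $m$ thanks to $R_m=H^0(\mathcal{O}_C(m))$ by normality; the reflexivity detour is not really needed), the strong HN filtration and the two vanishings you cite, and Riemann–Roch; then you recover $F(R,I)=\hat g$ via \Cref{FP function is fourier transform of density function} by taking the second distributional derivative of $g$. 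The gain is conceptual clarity: once $g$ is known to be piecewise linear, the Fourier transform is a one-line computation with delta functions and the identity $\widehat{g''}(y)=(iy)^2\hat g(y)$, with no geometric-sum manipulations; the two linear relations $\sum_s r_s=\text{rk}(\mathcal{S})$ and $\sum_s r_s\mu_s=-\delta_R$ both force the singularity at $y=0$ to be removable and follow from additivity of degree/rank in the sequence \eqref{short exact sequence on curve}. The cost is that you lean on \Cref{HK density function} and \Cref{FP function is fourier transform of density function}; the paper's proof is self-contained apart from \Cref{Langer}. The one step requiring real care in your plan --- the non-additivity of $h^0$ along the HN filtration, forcing the split of the filtration at the sign change of the limiting slope and the two separate vanishings (no sections on the quotient with negative maximal slope by \Cref{minimum slope negative implies no global sections}; $h^1=0$ on the subbundle with slopes tending to $+\infty$ via Serre duality and \Cref{filtration on the dual}) --- is handled correctly, and is the same geometric heart of the argument as in the paper's Claim inside \Cref{cohomologies in terms of HN filtration}.
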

\begin{remark}
The two relations $\text{rk}(\mathcal{S})= \sum \limits_{j=1}^{t+1}r_j$ and $\sum \limits_{j=1}^{t+1}\mu_jr_j=- \delta_R$ imply that the numerator of the right hand side of \eqref{curve} has a zero of order two at the origin. So the right hand side of \eqref{curve} is holomorphic at the origin. Conversely, the holomorphicity of the Frobenius-Poincar\'e function and the equality \eqref{curve} for non-zero complex numbers reveal the two relations. 
\end{remark}

The key steps in the proof of \Cref{statement in the curve case} are \Cref{length in terms of cohomologies lemma} and \Cref{cohomologies in terms of HN filtration}. The standard reference for results on sheaf cohomology used here is \cite{Hart}.

\begin{lemma}\label{length in terms of cohomologies lemma}
For $j > 2g-2$ and for all $n$, we have
\begin{equation}\label{length interms of cohomologies}
  \lambda((\frac{R}{\bp{I}{n}})_{p^n+j})= h^1(C, (f^{n*}\mathcal {S})(j))  \ .
\end{equation}
\end{lemma}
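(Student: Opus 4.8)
The plan is to relate the graded piece $(R/\bp{I}{n})_{p^n+j}$ directly to sheaf cohomology on $C$ by twisting and taking global sections of the short exact sequence \eqref{short exact sequence on curve}. First I would pull back \eqref{short exact sequence on curve} by the $n$-th absolute Frobenius $f^n$; since $f$ is flat (the curve is smooth, so \Cref{Langer}'s setup applies), this yields a short exact sequence $0 \to (f^n)^*\mathcal{S} \to \bigoplus^r \mathcal{O}_C \to (f^n)^*(\mathcal{O}_C(1)) \to 0$. Because $f$ acts as the $p$-th power map on the structure sheaf, $(f^n)^*(\mathcal{O}_C(1)) \cong \mathcal{O}_C(p^n)$ and more generally, after twisting by $\mathcal{O}_C(j)$, we get
\begin{equation*}
0 \longrightarrow ((f^n)^*\mathcal{S})(j) \longrightarrow \overset{r}{\bigoplus}\mathcal{O}_C(j) \longrightarrow \mathcal{O}_C(p^n+j) \longrightarrow 0 .
\end{equation*}
Now I would take the long exact sequence in cohomology. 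The degree $p^n+j$ piece of $R/\bp{I}{n}$ is, by construction, the cokernel of the degree-$(p^n+j)$ part of the map $\bigoplus^r R(-p^n) \xrightarrow{(h_1^{p^n},\dots,h_r^{p^n})} R$, i.e. the cokernel of $\bigoplus^r R_j \to R_{p^n+j}$; identifying $R_m = H^0(C,\mathcal{O}_C(m))$ for $m$ large (which holds once $m$ exceeds the regularity bound, in particular for $m = p^n+j$ with $j > 2g-2$, and also for $m = j$ when $j>2g-2$), this cokernel sits in the long exact sequence between $H^0$'s and $H^1(C,((f^n)^*\mathcal{S})(j))$.

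The key numerical input is the vanishing $H^1(C, \bigoplus^r \mathcal{O}_C(j)) = 0$ for $j > 2g-2$, which follows from Serre duality: $H^1(C,\mathcal{O}_C(j)) \cong H^0(C,\omega_C(-j))^\vee$, and $\deg(\omega_C(-j)) = 2g-2-j\delta_R < 0$ (using $\delta_R \ge 1$ and $j > 2g-2$), so the line bundle $\omega_C(-j)$ has no global sections. Hence the long exact sequence gives
\begin{equation*}
H^0\Bigl(\overset{r}{\bigoplus}\mathcal{O}_C(j)\Bigr) \longrightarrow H^0(\mathcal{O}_C(p^n+j)) \longrightarrow H^1(((f^n)^*\mathcal{S})(j)) \longrightarrow 0 ,
\end{equation*}
so that $H^1(C,((f^n)^*\mathcal{S})(j))$ is exactly the cokernel of $\bigoplus^r R_j \to R_{p^n+j}$, which is $(R/\bp{I}{n})_{p^n+j}$. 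This yields \eqref{length interms of cohomologies}. I should double-check that the map $H^0(\bigoplus^r\mathcal{O}_C(j)) = \bigoplus^r R_j \to H^0(\mathcal{O}_C(p^n+j)) = R_{p^n+j}$ really is multiplication by $(h_1^{p^n},\dots,h_r^{p^n})$ — this is because the pulled-back sheaf map $(f^n)^*(h_i)\colon \mathcal{O}_C \to \mathcal{O}_C(p^n)$ is multiplication by $h_i^{p^n}$ (Frobenius raises a local section/transition data to the $p^n$-th power), and twisting by $\mathcal{O}_C(j)$ does not change this on global sections.

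The main obstacle, and the only place requiring care, is making sure all the identifications $R_m \cong H^0(C,\mathcal{O}_C(m))$ are valid in the relevant degree range and that the relevant $H^1$'s of the free sheaf vanish — i.e. controlling the Castelnuovo–Mumford regularity / the failure of $R$ to be saturated in low degrees. Since $R$ is normal of dimension two, $R$ agrees with $\bigoplus_m H^0(C,\mathcal{O}_C(m))$ in all sufficiently high degrees, and the bound $j > 2g-2$ is chosen precisely so that both $j$ and $p^n + j$ lie in the safe range (for $j$ this uses $j > 2g-2 \ge$ the relevant threshold after possibly enlarging; one may need $j$ large enough that $R_j = H^0(\mathcal{O}_C(j))$, which the authors presumably fold into the hypothesis or an earlier normalization). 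Everything else is a routine diagram chase through the long exact cohomology sequence.
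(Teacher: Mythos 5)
Your proposal takes essentially the same route as the paper: pull back \eqref{short exact sequence on curve} by $f^n$, twist by $\mathcal{O}_C(j)$, and read off the cokernel from the long exact cohomology sequence, using $j>2g-2$ to kill $H^1\bigl(\bigoplus^r\mathcal{O}_C(j)\bigr)$. The single point you flag as possibly needing extra care — whether $R_m \cong H^0(C,\mathcal{O}_C(m))$ in the required degree range — is not actually an issue: the standing hypothesis that $R$ is normal means $\operatorname{Proj}(R)\hookrightarrow \mathbb{P}$ is projectively normal, so the identification $R_m \cong H^0(C,\mathcal{O}_C(m))$ holds for every $m\ge 0$ (Hartshorne II.5.14), not merely for $m$ large; thus $j>2g-2$ is needed only for the $h^1$-vanishing and no further regularity bound enters.
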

\begin{proof}
Given natural numbers $n$ and $j$, first pulling back \eqref{short exact sequence on curve} via $f^n$ and then tensoring with $\mathcal{O}_C(j)$, we get a short exact sequence:
\begin{equation}\label{short exact}
    0 \longrightarrow (f^{n*}\mathcal {S})(j) \longrightarrow \overset{r}{\bigoplus} \mathcal {O}_C(j) \xrightarrow{(h_1^{p^n}, \ldots, h_r^{p^n})} \mathcal{O}_C(p^n+j) \longrightarrow 0 \ .
\end{equation}
Note that, since $R$ is normal, for each $j$, the canonical inclusion $R_j \subset h^0(\mathcal {O}_C(j))$ is an isomorphism - see Exercise 5.14 of \cite{Hart}. Also for $j > 2g-2$, $h^1(\mathcal{O}_C(j))=0$ (see Example 1.3.4 of \cite{Hart}). So the long exact sequence of sheaf cohomologies corresponding to \eqref{short exact} gives \eqref{length interms of cohomologies}.  
\end{proof}
\begin{lemma}\label{cohomologies in terms of HN filtration}
Fix $s$ such that $1 \leq s \leq t$. For  all large $n$, if an index $j$ satisfies
\begin{equation}\label{range of twist}
    -p^n\frac{{\mu}_{s}}{\delta_R}+ \frac{2g-2}{\delta_R} < j < -p^n \frac{{\mu}_{s+1}}{\delta_R}\ ,
\end{equation}
then
\begin{equation}
    h^1(f^{n*}(\mathcal{S})(j))= -p^n(\sum \limits_{b=s}^t {\mu}_{b+1}r_{b+1})+ (\sum \limits_{b=s}^{t}r_{b+1})(g-1-j
    \delta_R) \,  .
\end{equation}
And for $j> -p^n \frac{\mu_{t+1}}{\delta_R}$,
$$ h^1(f^{n*}(\mathcal{S})(j))=0.$$
\end{lemma}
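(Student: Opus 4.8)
The plan is to pull the Harder--Narasimhan filtration all the way back to level $n$ and compute $h^1$ one graded piece at a time. Fix $n\ge n_0$. Because the filtration on $(f^{n_0})^*(\mathcal S)$ is strong --- this is exactly the content of \Cref{Langer} together with the choice of $n_0$ in \Cref{statement in the curve case} --- the chain $0=(f^{n-n_0})^*\E_0\subset (f^{n-n_0})^*\E_1\subset\cdots\subset (f^{n-n_0})^*\E_{t+1}=(f^n)^*(\mathcal S)$ is the Harder--Narasimhan filtration of $(f^n)^*(\mathcal S)$; in particular each graded piece $\mathcal G^{(n)}_b:=(f^{n-n_0})^*(\E_b/\E_{b-1})$ is semistable. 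Since the absolute Frobenius of $C$ multiplies degrees by $p$ and preserves ranks, $\mathcal G^{(n)}_b$ has rank $r_b$ and slope $p^{n-n_0}\mu(\E_b/\E_{b-1})=p^n\mu_b$. Twisting by $\mathcal O_C(j)$ and invoking \Cref{properties of HN filtration}, the bundle $((f^n)^*\mathcal S)(j)$ carries a filtration $0=\mathcal F_0\subset\mathcal F_1\subset\cdots\subset\mathcal F_{t+1}=((f^n)^*\mathcal S)(j)$ with $\mathcal F_b/\mathcal F_{b-1}\cong\mathcal G^{(n)}_b(j)$ semistable of rank $r_b$ and slope $p^n\mu_b+j\delta_R$ (recall $\delta_R=\deg\mathcal O_C(1)$).

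Two cohomological inputs drive everything. First, a semistable bundle of negative slope has no nonzero global section: this is \Cref{minimum slope negative implies no global sections} applied to a semistable bundle (whose HN filtration is trivial). Second --- dually --- a semistable bundle $\mathcal E$ with $\mu(\mathcal E)>2g-2$ has $h^1(\mathcal E)=0$: by Serre duality $h^1(\mathcal E)=h^0(\mathcal E^{*}\otimes\omega_C)$, and $\mathcal E^{*}\otimes\omega_C$ is semistable (by \Cref{filtration on the dual} and \Cref{properties of HN filtration}) of slope $-\mu(\mathcal E)+(2g-2)<0$, hence has no section. Now for $j$ in the range \eqref{range of twist}: the lower bound says $p^n\mu_s+j\delta_R>2g-2$, so since $\mu_b\ge\mu_s$ for $b\le s$ we get $h^1(\mathcal G^{(n)}_b(j))=0$ for all $b\le s$; the upper bound says $p^n\mu_{s+1}+j\delta_R<0$, so since $\mu_b\le\mu_{s+1}$ for $b\ge s+1$ we get $h^0(\mathcal G^{(n)}_b(j))=0$ for all $b\ge s+1$. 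Feeding the short exact sequences $0\to\mathcal F_{b-1}\to\mathcal F_b\to\mathcal G^{(n)}_b(j)\to 0$ into the long exact cohomology sequence and using $H^2(C,-)=0$: induction on $b\le s$ gives $h^1(\mathcal F_s)=0$, and then for $b=s+1,\dots,t+1$ the vanishing $H^0(\mathcal G^{(n)}_b(j))=0$ makes $0\to H^1(\mathcal F_{b-1})\to H^1(\mathcal F_b)\to H^1(\mathcal G^{(n)}_b(j))\to 0$ exact, whence $h^1((f^n)^*(\mathcal S)(j))=\sum_{b=s+1}^{t+1}h^1(\mathcal G^{(n)}_b(j))$. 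Each summand is evaluated by Riemann--Roch together with $h^0(\mathcal G^{(n)}_b(j))=0$, giving $h^1(\mathcal G^{(n)}_b(j))=-\deg\mathcal G^{(n)}_b(j)-r_b(1-g)=-p^nr_b\mu_b+r_b(g-1-j\delta_R)$. Summing and reindexing $b\leftrightarrow b+1$ produces precisely the claimed formula. For the last assertion, once $j$ is large enough that even the smallest piece satisfies $p^n\mu_{t+1}+j\delta_R>2g-2$, every $\mathcal G^{(n)}_b(j)$ has vanishing $H^1$, so the same filtration argument yields $h^1((f^n)^*(\mathcal S)(j))=0$; this is the condition $j>-p^n\mu_{t+1}/\delta_R$ up to the bounded shift $(2g-2)/\delta_R$, and the finitely many remaining integers $j$ (a set of bounded size, since $\mu_{t+1}<0$ forces $-p^n\mu_{t+1}/\delta_R>2g-2$ for large $n$) are covered by \Cref{length in terms of cohomologies lemma} combined with the degree bound in \Cref{support}.

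The only genuinely delicate point is the first paragraph: that the Frobenius pullbacks of the HN graded pieces remain semistable with slopes exactly $p^n\mu_b$. In characteristic $p$ this is false for a general filtration by subbundles, and it is precisely the strong-filtration hypothesis --- guaranteed by \Cref{Langer} after the choice of $n_0$ --- that rescues it; without it the factors could destabilize and the slope bookkeeping would collapse. Everything downstream is routine: long exact sequences, Riemann--Roch, and the strict slope inequalities $\mu_1>\mu_2>\cdots>\mu_{t+1}$ from \Cref{HN filtration}, which are what let the single range condition \eqref{range of twist} simultaneously control all the factors.
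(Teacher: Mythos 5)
Your proof is correct in substance and takes a genuinely different route from the paper's. The paper passes to the dual $(\mathcal{S}^{1/p^n})^*$ and, via Serre duality, reduces $h^1(\mathcal{S}^{1/p^n}(j))$ to $h^0$ of a single \emph{truncation} $K_{t+1-s}^{1/p^{n-n_0}}\otimes\omega_C(-j)$; the left inequality in \eqref{range of twist} is used to show this truncation captures all of $h^0$ of the dual, the right inequality gives $h^1$ of the truncation vanishes, and Riemann--Roch is applied once to the truncated bundle. You instead stay with the bundle $(f^n)^*(\mathcal{S})(j)$ itself and decompose along the (strong) Harder--Narasimhan filtration: the left inequality kills $h^1$ of the high-slope graded pieces (after dualizing each piece, not the whole bundle), the right inequality kills $h^0$ of the low-slope pieces, and a long-exact-sequence induction reduces $h^1$ of the total to a sum of $h^1$'s of individual semistable factors, each evaluated by Riemann--Roch. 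Because Euler characteristic is additive, the two routes produce the same arithmetic, but yours is more modular and keeps the role of each HN factor transparent; the paper's is slightly more compact because it applies Riemann--Roch only once. Both rest on exactly the same inputs: the strong filtration from \Cref{Langer}, \Cref{minimum slope negative implies no global sections}, Serre duality, \Cref{filtration on the dual}/\Cref{properties of HN filtration}, and Riemann--Roch.

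One caveat: your patch for the last assertion does not actually close the gap you identify. Your clean argument yields $h^1((f^n)^*\mathcal{S}(j))=0$ for $j > -p^n\mu_{t+1}/\delta_R + (2g-2)/\delta_R$, leaving a window of bounded width. You propose to cover it by combining \Cref{length in terms of cohomologies lemma} with the degree bound from \Cref{support}, but \Cref{support} only gives vanishing of $(\frac{R}{\bp{I}{n}})_m$ for $m \geq Cp^n$, i.e.\ for $j \geq (C-1)p^n$ with an unspecified constant $C$; there is no reason this threshold lies below $-p^n\mu_{t+1}/\delta_R$, so the gap window is not covered. To be fair, the paper's own proof (the displayed Claim) only treats $j$ in the range \eqref{range of twist} and never proves the last assertion either; and the downstream application in \Cref{computation of limits of parts} invokes the vanishing only for $j > -p^n\mu_{t+1}/\delta_R + 2g-2$, which your argument does deliver. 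So the safest fix is simply to state (and prove) the vanishing for $j > -p^n\mu_{t+1}/\delta_R + (2g-2)/\delta_R$, which is all that is needed. The rest of your proof --- semistability of the pulled-back factors via the strong filtration, the slope bookkeeping $p^n\mu_b + j\delta_R$, the two vanishing criteria, the induction on the filtration, and the Riemann--Roch evaluation with reindexing $b \leftrightarrow b+1$ --- is all correct.
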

\vspace{.5cm}
\noindent The only reason for choosing large $n$ is to ensure that for all $s$
$$ -p^n\frac{{\mu}_{s}}{\delta_R}+ \frac{2g-2}{\delta_R} < -p^n \frac{{\mu}_{s+1}}{\delta_R}.$$
\begin{proof}
We introduce some notation below which are used in this section. \\\\
Set $K_j = \text{ker}((f^{n_0*}\mathcal S)^ \vee \rightarrow \E_{t+1-j}^ \vee)$. Then by \Cref{filtration on the dual}, there is a HN filtration-
\begin{equation}
0=K_0 \subset K_1 \subset \ldots K_t \subset K_{t+1}=  (f^{n_0*}\mathcal{S})^ \vee  
\end{equation} \ .
\vspace{.05cm}
\begin{claim}\label{length in terms of dual lemma} Denote the sheaf of differentials of $C$ by $\omega_C$.
For $j$ as in \eqref{range of twist}, we have
\begin{equation}\label{length in terms of the dual bundle}
(1) \  h^1(f^{n*}(\mathcal{S})(j))= h^0(f^{n-n_0*}(K_{t+1-s}) \otimes \omega_C(-j)) \hspace{.5cm} \text{and} \hspace{.5cm} (2) \ h^1(f^{n-n_0*}(K_{t+1-s}) \otimes \omega_C(-j)) =0 \ .
\end{equation}
\end{claim}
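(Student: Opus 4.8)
The statement to be proved is Claim~\ref{length in terms of dual lemma}: for $j$ in the range \eqref{range of twist} we want to compute $h^1(\mathcal{S}^{1/p^n}(j))$ via Serre duality and show a vanishing of $h^1$ of a twisted dual bundle. The natural tool is Serre duality on the smooth projective curve $C$: for any vector bundle $\mathcal{E}$ on $C$, $h^1(C,\mathcal{E}) = h^0(C, \mathcal{E}^* \otimes \omega_C)$. Applying this to $\mathcal{E} = \mathcal{S}^{1/p^n}(j)$ gives $h^1(\mathcal{S}^{1/p^n}(j)) = h^0((\mathcal{S}^{1/p^n})^* \otimes \omega_C(-j))$. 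So the first part of \eqref{length in terms of the dual bundle} will follow once I identify $(\mathcal{S}^{1/p^n})^* \otimes \omega_C(-j)$ closely enough with $K_{t+1-s}^{1/p^{n-n_0}} \otimes \omega_C(-j)$ at the level of global sections.

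\emph{Step 1: set up the dual HN filtration and pull it back.} By Theorem~\ref{Langer} and the hypothesis that the HN filtration on $(f^{n_0})^*\mathcal{S}$ is strong, the HN filtration on $\mathcal{S}^{1/p^n} = (f^{n-n_0})^*(\mathcal{S}^{1/p^{n_0}})$ is obtained by pulling back the one on $\mathcal{S}^{1/p^{n_0}}$ along $f^{n-n_0}$. Dualizing via Lemma~\ref{filtration on the dual}(2), the HN filtration on $(\mathcal{S}^{1/p^n})^*$ is $K_j^{1/p^{n-n_0}}$, with the slopes of the successive quotients being $-p^{n-n_0}\mu(\E_{t+1-j}/\E_{t-j})$, i.e. $-p^n \mu_{t+1-j+1}$ after normalization — so the quotients of $(\mathcal{S}^{1/p^n})^*$ have (un-normalized) slopes $-p^n\mu_{t+1}, -p^n\mu_t, \dots, -p^n\mu_1$ in increasing order. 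Tensoring everything with the line bundle $\omega_C(-j)$, which has degree $(2g-2) - j\delta_R$, shifts every slope by that amount (Lemma~\ref{properties of HN filtration}(1),(3)).

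\emph{Step 2: read off which graded pieces of the filtered bundle can have sections.} After the twist, the $b$-th quotient piece $\bigl(K_{b}/K_{b-1}\bigr)^{1/p^{n-n_0}} \otimes \omega_C(-j)$ is a semistable bundle (Lemma~\ref{properties of HN filtration}(2)) of slope $-p^n\mu_{t+2-b} + (2g-2) - j\delta_R$. By Proposition~\ref{minimum slope negative implies no global sections} (applied to the piece itself, or rather: a semistable bundle of negative slope has no nonzero sections, and a semistable bundle of slope $> 2g-2$ has $h^1=0$ hence by Riemann–Roch the expected $h^0$), the inequalities in \eqref{range of twist} are exactly engineered so that: the quotients coming from $K_{t+1-s}$ and below have slope $> 2g-2$ (so contribute their full Riemann–Roch value to $h^0$ and have vanishing $h^1$), while the quotients strictly above $K_{t+1-s}$ have negative slope (so contribute nothing to $h^0$). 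Concretely, the condition $j < -p^n\mu_{s+1}/\delta_R$ forces the relevant top quotients to have negative slope after twisting, and $j > -p^n\mu_s/\delta_R + (2g-2)/\delta_R$ forces the lower ones to have slope exceeding $2g-2$. Chasing the long exact sequences of cohomology up the filtration $0 = K_0 \subset \dots \subset K_{t+1-s} \subset \dots$, the top quotients contribute $0$ to $h^0$, so $h^0((\mathcal{S}^{1/p^n})^*\otimes\omega_C(-j)) = h^0(K_{t+1-s}^{1/p^{n-n_0}} \otimes \omega_C(-j))$; this is part (1). For part (2), $h^1(K_{t+1-s}^{1/p^{n-n_0}}\otimes\omega_C(-j))$ vanishes because, again filtering, each quotient of $K_{t+1-s}$ after the twist is semistable of slope $> 2g-2$, and a semistable bundle of slope $> 2g-2$ on $C$ has vanishing $h^1$ (equivalently, its Serre dual is semistable of negative slope, so has no sections, by Proposition~\ref{minimum slope negative implies no global sections} and Lemma~\ref{filtration on the dual}(1)).

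\emph{Main obstacle.} The routine-but-delicate part is verifying the slope bookkeeping: keeping track of which index $t+1-s$, $\mu_{s}$ versus $\mu_{s+1}$, and which direction the HN slopes run after two dualizations and a twist, and checking that the two inequalities in \eqref{range of twist} really do separate "slope $> 2g-2$" from "slope $< 0$" for the correct blocks of quotients — with the $(2g-2)/\delta_R$ correction term in the lower bound accounting precisely for the $\deg\omega_C$ shift. There is also a minor point to handle: Proposition~\ref{minimum slope negative implies no global sections} is stated for the minimal HN slope being negative, so to conclude $h^0 = 0$ for a bundle all of whose HN slopes are negative I apply it directly; and to get $h^1 = 0$ I pass to the Serre dual and apply the same proposition there, using that dualizing reverses and negates slopes (Lemma~\ref{filtration on the dual}). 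Once the slope accounting is pinned down, the cohomology long exact sequences give \eqref{length in terms of the dual bundle} immediately, and combining with Serre duality and Lemma~\ref{length in terms of cohomologies lemma} will feed into the final computation of $F(R,I)(y)$.
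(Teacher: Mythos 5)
Your approach matches the paper's: Serre duality reduces $h^1(\mathcal{S}^{1/p^n}(j))$ to $h^0((\mathcal{S}^{1/p^n})^*\otimes\omega_C(-j))$, and then the strong HN filtration $K_\bullet$ on the dual (from \Cref{filtration on the dual} and \Cref{Langer}) together with \Cref{minimum slope negative implies no global sections} isolate the contribution of $K_{t+1-s}$. The one structural variation is in part (2): you filter $K_{t+1-s}$ and argue each semistable subquotient has twisted slope $> 2g-2$, hence vanishing $h^1$, whereas the paper applies Serre duality once more to $K_{t+1-s}^{1/p^{n-n_0}}\otimes\omega_C(-j)$, identifies the dual with $(\mathcal{S}^{1/p^n}/\E_s^{1/p^{n-n_0}})(j)$, and checks that its top HN slope $p^n\mu_{s+1}+j\delta_R$ is negative. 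These are equivalent.

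However, the last sentence of your Step~2 ("Concretely, $\ldots$") attributes the two inequalities of \eqref{range of twist} to the wrong blocks of quotients. The first quotient strictly above $K_{t+1-s}$ is $K_{t+2-s}/K_{t+1-s}\cong(\E_s/\E_{s-1})^*$, so after pullback and twist its slope is $-p^n\mu_s+(2g-2)-j\delta_R$; this (and hence all the smaller slopes above it) is negative precisely when $j > -p^n\mu_s/\delta_R+(2g-2)/\delta_R$, the \emph{left} inequality, which is what makes $h^0$ of the quotient bundle vanish and gives~(1). Dually, the subquotients of $K_{t+1-s}$ itself have twisted slopes at least $-p^n\mu_{s+1}+(2g-2)-j\delta_R$, and this exceeds $2g-2$ precisely when $j < -p^n\mu_{s+1}/\delta_R$, the \emph{right} inequality, which is what gives the $h^1$-vanishing in~(2). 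Your own earlier statement of which blocks satisfy "slope $<0$" vs.\ "slope $>2g-2$" is correct; only the final attribution of left vs.\ right inequality is swapped, and this is exactly the sign-chasing you flagged as the delicate point.
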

\vspace{.5cm}
\noindent We defer proving the above claim until deriving \Cref{cohomologies in terms of HN filtration} from it.\\\\
Combining \eqref{length in terms of the dual bundle} and the Riemann-Roch theorem on curves (see Theorem 2.6.9, \cite{Potier}), we get that for the range of values as in \eqref{range of twist},
\begin{equation}\label{using Riemann-Roch}
\begin{split}
      h^1(f^{n*}(S)(j)) 
      & = h^0(f^{n-n_0*}(K_{t+1-s})\otimes \omega_C(-j))- h^1(f^{n-n_0*}(K_{t+1-s})\otimes \omega_C(-j))\\
      & = \text{deg}(f^{n-n_0*}(K_{t+1-s})\otimes \omega_C(-j))+ (1-g)\text{rk}(f^{n-n_0*}(K_{t+1-s})\otimes \omega_C(-j))\\
      & = \text{deg}(f^{n-n_0*}K_{t+1-s})+ \text{rk}(K_{t+1-s})\cdot\text{deg}(\omega_C(-j))+ (1-g)\text{rk}(K_{t+1-s}) \ (\text{using}, (1), \Cref{properties of HN filtration}) \ .
\end{split}
\end{equation}
Since $K_{t+1-s}$ is the kernel of a surjection $(f^{n_0*}S)^ \vee \rightarrow \E_s^ \vee$, we have,
\begin{equation}\label{deg of dual}
\begin{split}
   \text{deg}(f^{n-n_0*}K_{t+1-s})
  & = p^{n-n_0}\text{deg}(K_{t+1-s})= p^{n-n_0}[\text{deg}(f^{n_0*}S)^ \vee)- \text{deg}(\E_s^ \vee)]\\ 
  & = p^{n-n_0}[- \text{deg}(f^{n_0*}S)+ \text{deg}(\E_s)]\\
  & = -p^{n-n_0} \sum \limits_{b=s}^{t}\text{deg}(\E_{b+1}/\E_b) = -p^n\sum \limits_{b=s}^{t}{\mu}_{b+1} r_{b+1}.
\end{split}
\end{equation}
Similarly one can compute the rank of $K_{t+1-s}$.
\begin{equation}\label{rank of dual}
  \begin{split}
      \text{rk}(K_{t+1-s})
      & = \text{rk}((f^{n_0*}S)^ \vee)- \text{rk}(\E_s^ \vee)= \text{rk}(f^{n_0*}S)- \text{rk}(\E_s)\\
      & = \sum \limits_{b=s}^{t} \text{rk}(\E_{b+1}/ \E_b) = \sum \limits_{b=s}^{t} r_{b+1}.
  \end{split}  
\end{equation}
Now the desired conclusion follows from combining \eqref{using Riemann-Roch}, \eqref{deg of dual}, \eqref{rank of dual} and noting that $\text{deg}(\omega_C(-j))= 2g-2-j
\delta_R$.\\\\
\noindent \begin{proof}[\textbf{Proof of Claim:}] By Serre Duality (see Corollary 7.7, Chapter III, \cite{Hart}), $h^1(f^{n*}(\mathcal{S})(j))= h^0((f^{n*}\mathcal{S})^ \vee \otimes \omega_C (-j))$. We prove (1) by showing that if the left most inequality in \eqref{range of twist} holds, the cokernel of the inclusion $$H^0(f^{n-n_0*}(K_{t+1-s})\otimes \omega_C(-j)) \subseteq H^0((f^{n*}\mathcal{S})^ \vee \otimes \omega_C (-j))$$
is zero. For this, first note that by \Cref{properties of HN filtration},(3), there is a HN filtration -
\begin{equation}\label{HN filtration on the quotient}
   0 \subset \frac{f^{n-n_0*}(K_{t+2-s})\otimes \omega_C(-j)}{f^{n-n_0*}(K_{t+1-s})\otimes \omega_C(-j)} \subset \ldots \subset \frac{f^{n-n_0*}(K_{t})\otimes \omega_C(-j)}{f^{n-n_0*}(K_{t+1-s})\otimes \omega_C(-j)}\subset \frac{(f^{n*}\mathcal{S})^ \vee \otimes \omega_C (-j)}{f^{n-n_0*}(K_{t+1-s})\otimes \omega_C(-j)}.
\end{equation}
The slope of the first non-zero term in the HN filtration in \eqref{HN filtration on the quotient} is $-\mu_s p^n+ 2g-2- j\delta_R$, which is negative by \eqref{range of twist}. The desired conclusion now follows from \Cref{minimum slope negative implies no global sections}.

Now we show that if $j$ satisfies the right most inequality in \eqref{range of twist}, then assertion (2) in the claim holds. By Serre duality $h^1(f^{n-n_0*}(K_{t+1-s})\otimes \omega_C(-j))= h^0((\frac{f^{n*}S}{f^{n-n_0*}\E_s})(j))$. By \Cref{properties of HN filtration}, (3), the HN filtration on
$(\frac{f^{n*}S}{f^{n-n_0*}\E_s})(j)$ is as given below-
\begin{equation}
    0 \subset \frac{f^{n-n_0*}\E_{s+1}}{f^{n-n_0*}\E_s}(j) \subset \ldots \subset \frac{f^{n-n_0*}\E_{t}}{f^{n-n_0*}\E_{s}}(j) \subset \frac{f^{n*}S}{f^{n-n_0*}\E_s}(j).
\end{equation}
Since the slope of the first non-zero term in the above filtration is $p^n\mu_{s+1}+ j \delta_R$, which negative by \eqref{range of twist}, using \Cref{minimum slope negative implies no global sections}, we get the desired conclusion.
\medskip
\end{proof}
\end{proof}
\begin{proof} [\textbf{Proof of \Cref{statement in the curve case}:}] We shall show that \eqref{curve} holds for all non-zero $y$. Then by the principle of analytic continuation (see page 127, \cite{Ahlfors}), we get \eqref{curve} at all points.\\\\
Fix an open subset $U$ of the complex plane whose closure is compact and the closure does not contain the origin. We fix some notations below which we use in the ongoing proof. For $1 \leq s \leq t+1$, set,
$$
l_s(n)= \lfloor -\mu_s.\frac{p^n}{\delta_R} \rfloor \,\,\, \text{and} \,\,\,  u_s(n)= \lceil-\mu_s.\frac{p^n}{\delta_R}+2g-2 \rceil +1.
$$ 
Note that 
\begin{equation}\label{two limits}
    \underset{n \to \infty}{\lim} \frac{l_s(n)}{p^n}= \frac{u_s(n)}{p^n}= \frac{-\mu_s}{\delta_R} \ .
\end{equation}
There is a sequence of functions $(g_n)_n$ such that for $y \in U$, we have
\begin{equation} \label{splitting into terms}
\begin{split}
    \sum \limits_{j=0}^{\infty} \lambda((\frac{R}{\bp{I}{n}})_j)e^{-iyj/p^n}
    &=\underset{j < p^n}{\sum} \lambda((\frac{R}{\bp{I}{n}})_j)e^{-iyj/p^n}+  \sum \limits_{j=2g-1}^{l_1(n)-1} \lambda((\frac{R}{\bp{I}{n}})_{j+p^n})e^{-iy(j+p^n)/p^n}\\
    & + \sum \limits_{s=1}^{t}  \sum \limits_{j= u_s(n)}^{l_{s+1}(n)-1} \lambda((\frac{R}{\bp{I}{n}})_{j+p^n})e^{-iy(j+p^n)/p^n}+ g_n(y) \ .
\end{split}    
\end{equation}
In \Cref{computation of limits of parts}, we compute limits of the terms appearing on the right hand side of \eqref{splitting into terms} normalized by $(\frac{1}{p^n})^2$.
\begin{lemma}\label{computation of limits of parts}For $y \in U$, we have
\begin{enumerate}
    \item $\underset{n \to \infty}{\lim}(\frac{1}{p^n})^2g_n(y)= 0.$ 
    \item $\underset{n \to \infty}{\lim}(\frac{1}{p^n})^2\underset{j < p^n}{\sum} \lambda((\frac{R}{\bp{I}{n}})_j)e^{-iyj/p^n}=-\delta_R(\frac{e^{-iy}-1}{(iy)^2})-\delta_R \frac{e^{-iy}}{iy} .$
    \item
    $\underset{n \to \infty}{\lim} (\frac{1}{p^n})^2\sum \limits_{j=2g-1}^{l_1(n)-1} \lambda((\frac{R}{\bp{I}{n}})_{j+p^n})e^{-iy(j+p^n)/p^n}.$\\
    $= - (\sum \limits_{b=0}^{t} \mu_{b+1}r_{b+1})(\frac{1-e^{
    \frac{iy\mu_1}{\delta_R}}}{iy})e^{-iy}
    -i\delta_R(\sum \limits_{b=0}^{t}r_{b+1})\dfrac{d}{dy}(\frac{1-e^{\frac{iy\mu_1}{\delta_R}}}{iy})e^{-iy}.$
    \item For any $s$, $1 \leq s \leq t$, $\underset{n \to \infty}{\lim}(\frac{1}{p^n})^2\sum \limits_{j= u_s(n)}^{l_{s+1}(n)-1} \lambda((\frac{R}{\bp{I}{n}})_{j+p^n})e^{-iy(j+p^n)/p^n}.$\\
    $= (-\sum \limits_{b=s}^{t} \mu_{b+1}r_{b+1})(\frac{e^{\frac{iy\mu_s}{\delta_R}}-e^{\frac{iy\mu_{s+1}}{\delta_R}}}{iy}) e^{-iy}
    -i\delta_R(\sum \limits_{b=s}^{t} r_{b+1}) \dfrac{d}{dy}(\frac{e^{\frac{iy\mu_s}{\delta_R}}-e^{\frac{iy\mu_{s+1}}{\delta_R}}}{iy})e^{-iy}.$
    
\end{enumerate}
\end{lemma}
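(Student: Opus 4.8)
The plan is to handle the four groups of summands in \eqref{splitting into terms} one at a time; in each case I will rewrite the relevant length as an explicit affine function of the summation index and then recognize the normalized partial sum as a Riemann sum, together with the elementary identities $\int_a^b e^{-iyx}\,dx=\frac{e^{-iya}-e^{-iyb}}{iy}$ and $\int_a^b x\,e^{-iyx}\,dx=i\frac{d}{dy}\int_a^b e^{-iyx}\,dx$.

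For the error term in (1), I will argue that $g_n(y)$ collects only $O(1)$ summands, a bound independent of $n$: the degrees in $[p^n,\,p^n+2g-2]$ (those between Term A and Term B); the $t+1$ transition windows $[p^n+l_s(n),\,p^n+u_s(n)-1]$, each of width $O(1)$ by the definitions of $l_s(n)$ and $u_s(n)$; and one last index at degree $p^n+l_{t+1}(n)$, beyond which $\lambda((\frac{R}{\bp{I}{n}})_{j+p^n})=0$ for large $n$ by the vanishing half of \Cref{cohomologies in terms of HN filtration} (note $\mu_{t+1}<0$ since $\text{deg}(\mathcal S)=-\delta_R<0$, so $l_{t+1}(n)\to\infty$). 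Since $\lambda((\frac{R}{\bp{I}{n}})_m)\le\lambda(R_m)=O(m)$ and $m=O(p^n)$ on the support by \Cref{support}, while $|e^{-iy(j+p^n)/p^n}|$ is bounded on the compact closure of $U$, this yields $g_n(y)=O(p^n)$ and hence $(\tfrac{1}{p^n})^2 g_n(y)\to 0$. For Term A, giving (2): since $I$ is generated in degree one, $\bp{I}{n}$ is generated in degree $p^n$, so $(\frac{R}{\bp{I}{n}})_j=R_j$ for $j<p^n$; as $R$ is normal and two dimensional, $\dim_k R_j=\delta_R j+(1-g)$ for $j>2g-2$ by Riemann--Roch (using $R_j=H^0(\mathcal O_C(j))$), differing from the affine function $\delta_R j+(1-g)$ only on a fixed finite set. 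The constant $1-g$ and the finite correction contribute $O(1/p^n)$ after normalization, while $(\tfrac{1}{p^n})^2\delta_R\sum_{j=0}^{p^n-1}j\,e^{-iyj/p^n}\to\delta_R\int_0^1 x\,e^{-iyx}\,dx$, which integration by parts identifies with $-\delta_R\tfrac{e^{-iy}-1}{(iy)^2}-\delta_R\tfrac{e^{-iy}}{iy}$.

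For Terms B and C, giving (3) and (4): I will first note that the proof of \Cref{cohomologies in terms of HN filtration} applies verbatim to the index $s=0$, with the conventions $\E_0=0$, $\mu_0=+\infty$, $K_{t+1}=((f^{n_0})^*\mathcal S)^*$, yielding for $2g-1\le j<l_1(n)$ that $h^1((f^n)^*\mathcal S(j))=-p^n\sum_{b=0}^{t}\mu_{b+1}r_{b+1}+(\sum_{b=0}^{t}r_{b+1})(g-1-j\delta_R)$, consistently with $\sum_{b=0}^{t}\mu_{b+1}r_{b+1}=\text{deg}(\mathcal S)=-\delta_R$ and $\sum_{b=0}^{t}r_{b+1}=\text{rk}(\mathcal S)$. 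Combining this and \Cref{cohomologies in terms of HN filtration} with \Cref{length in terms of cohomologies lemma} (legitimate since the relevant indices exceed $2g-2$ for large $n$), the summand $\lambda((\frac{R}{\bp{I}{n}})_{j+p^n})$ equals, on the range appearing in the $s$-th inner sum, an affine function $\alpha_s p^n+\beta_s-\gamma_s j$ with $\alpha_s=-\sum_{b=s}^{t}\mu_{b+1}r_{b+1}$ and $\gamma_s=(\sum_{b=s}^{t}r_{b+1})\delta_R$. Since $l_s(n)/p^n$ and $u_s(n)/p^n$ both tend to $-\mu_s/\delta_R$ (all such limits being $\ge 0$ because the maximal HN slope satisfies $\mu_1=\mu(\E_1)/p^{n_0}\le 0$, as $\E_1\subset(f^{n_0})^*\mathcal O_C^{\,r}=\mathcal O_C^{\,r}$, and the $\mu_s$ decrease), the $\alpha_s p^n$-part contributes $\alpha_s e^{-iy}\int_{-\mu_s/\delta_R}^{-\mu_{s+1}/\delta_R}e^{-iyx}\,dx$, the $-\gamma_s j$-part contributes $-\gamma_s e^{-iy}\int_{-\mu_s/\delta_R}^{-\mu_{s+1}/\delta_R}x\,e^{-iyx}\,dx$, and the $\beta_s$-part is $O(1/p^n)$; here $-\mu_0/\delta_R$ is read as $0$ for Term B, and the factor $e^{-iy}$ comes from $e^{-iy(j+p^n)/p^n}=e^{-iyj/p^n}e^{-iy}$. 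Applying the two integral identities above then puts the limits into the stated closed forms.

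The substance is entirely in the cited inputs \Cref{length in terms of cohomologies lemma}, \Cref{cohomologies in terms of HN filtration} (with its harmless $s=0$ extension), and the affine shape of $\dim_k R_j$; no new geometric idea is needed. What I expect to be the main obstacle is purely the bookkeeping: verifying that the windows absorbed into $g_n$ really have bounded size, matching the transition points $u_s(n),l_s(n)$ so that the inner sums together with $g_n$ tile $\{j\ge 0\}$ exactly, and checking that the degenerate cases are consistent with the formulas — in particular the endpoint $s=0$, and the possibility $\mu_1=0$, which makes Term B and the lower endpoint of the first sum in Term C collapse to the origin (and the corresponding integrals to $\int_0^0=0$). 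The analytic steps — Riemann-sum convergence, integration by parts, differentiation under the integral sign — are routine.
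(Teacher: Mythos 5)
Your proposal is correct and follows essentially the same route as the paper: split the sum into the four blocks as in \eqref{splitting into terms}, bound $g_n$ by $O(p^n)$ using the $O(1)$ count of leftover summands together with $\lambda(R_m)=O(m)$ and the vanishing of $\lambda((\frac{R}{\bp{I}{n}})_m)$ past the top window, substitute the explicit affine formula for $\lambda((\frac{R}{\bp{I}{n}})_{p^n+j})$ furnished by \Cref{length in terms of cohomologies lemma} and \Cref{cohomologies in terms of HN filtration} (including the tacit $s=0$ extension, which the paper also uses silently), and pass to the limit after splitting the affine summand into its $p^n$-proportional, constant, and $j$-proportional parts.

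The only genuine difference is at the final computational step: where you recognize the normalized sums directly as Riemann sums for $\int e^{-iyx}\,dx$ and $\int x\,e^{-iyx}\,dx$ over $[\!-\mu_s/\delta_R, -\mu_{s+1}/\delta_R]$ and then apply $\int_a^b x e^{-iyx}dx = i\,\frac{d}{dy}\int_a^b e^{-iyx}dx$, the paper first carries out the finite geometric sum $\sum_j e^{-iyj/p^n}$ in closed form and encodes the $j$-weighted sum via $\frac{d}{dy}(e^{-iyj/p^n}) = \frac{-ij}{p^n}e^{-iyj/p^n}$, then exchanges $\frac{d}{dy}$ and $\lim_{n\to\infty}$ by uniform convergence on compacts. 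Similarly for (2), you invoke Riemann--Roch explicitly to see $\lambda(R_j) = \delta_R j + (1-g)$ for $j > 2g-2$, whereas the paper packages the same fact as uniform convergence of the step functions $h_n(x)=\tfrac{1}{p^n}\lambda(R_{\lfloor xp^n\rfloor})$ to $\delta_R x$. These are cosmetic variants of each other; both yield the same closed forms. Your extra care about the edge cases ($\mu_1\le 0$ via $\E_1\subset \oplus^r\mathcal O_C$; the possible collapse of Term B when $\mu_1=0$) is a small bonus the paper leaves implicit, and it is correct.
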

\vspace{.5cm}
\noindent \textbf{\textit{Continuation of proof of \Cref{statement in the curve case}:}} We establish the statement \Cref{statement in the curve case}
using \Cref{computation of limits of parts} before verifying \Cref{computation of limits of parts}. When we use \Cref{computation of limits of parts} to compute $\underset{n \to \infty}{\lim}(\frac{1}{p^n})^2\underset{j \in \N}{\sum}\lambda((\frac{R}{\bp{I}{n}})_j)e^{-iyj/p^n}$, some terms on the right hand side of 3., \Cref{computation of limits of parts} cancel some terms on the right hand side of 4., \Cref{computation of limits of parts}. After cancelling appropriate terms we get, for $y \in U$
\begin{center}
$\underset{n \to \infty}{\lim}(\frac{1}{p^n})^2\underset{j \in \N}{\sum}\lambda((\frac{R}{\bp{I}{n}})_j)e^{-iyj/p^n}$\\
$= -\delta_R(\frac{e^{-iy}-1}{(iy)^2})-\delta_R \frac{e^{-iy}}{iy}- \frac{(\sum \limits_{b=0}^{t} \mu_{b+1}r_{b+1})e^{-iy}}{iy}+ \frac{\sum \limits_{b=0}^{t}\mu_{b+1}r_{b+1}e^{-iy(1- \frac{\mu_{b+1}}{\delta_R})}}{iy}$\\
$-i\delta_R(\sum \limits_{b=0}^{t}r_{b+1})\frac{d}{dy}(\frac{1}{iy})e^{-iy}+i \delta_R \sum \limits _{b=0}^{t}r_{b+1}\frac{d}{dy}(\frac{e^{iy\frac{\mu_{b+1}}{\delta_R}}}{iy})e^{-iy}$\\
$= -\delta_R(\frac{e^{-iy}-1}{(iy)^2})-\delta_R \frac{e^{-iy}}{iy}+ \frac{\delta_Re^{-iy}}{iy}+ \frac{\sum \limits_{b=0}^{t}\mu_{b+1}r_{b+1}e^{-iy(1- \frac{\mu_{b+1}}{\delta_R})}}{iy}$\\
$-\delta_R(\text{rk}(\mathcal{S}))\frac{e^{-iy}}{(iy)^2}- \frac{\sum \limits_{b=0}^{t}\mu_{b+1}r_{b+1}e^{-iy(1-\frac{\mu_{b+1}}{\delta_R})}}{iy}+ \delta_R\frac{\sum \limits_{b=0}^{t}r_{b+1}e^{-iy(1-\frac{\mu_{b+1}}{\delta_R})}}{(iy)^2}$ \ .
\end{center}
The last line is indeed equal to the right hand side of \eqref{curve}.
\begin{proof}[\textbf{Proof of \Cref{computation of limits of parts}:}]
1) We show that there is a constant $C$ such that $|g_n(y)| \leq Cp^n$ on $U$. By \Cref{length in terms of cohomologies lemma} and \Cref{cohomologies in terms of HN filtration}, $\lambda((\frac{R}{\bp{I}{n}})_l)=0$ for $l > -\mu_{t+1}\frac{p^n}{\delta_R}+ p^n+ 2g-2$. So using \eqref{splitting into terms} we get an integer $N$, such that each $g_n$ is a sum of at most $N$ functions of the form $\lambda((\frac{R}{\bp{I}{n}})_l)e^{-iyl/p^n}$, where $l$ is at most $-\mu_{t+1}\frac{p^n}{\delta_R}+p^n+2g-2$. We prove (1) by showing that there is a $C'$ such that for each of these functions $\lambda((\frac{R}{\bp{I}{n}})_l)e^{-iyl/p^n}$ appearing in $g_n$, $|\lambda((\frac{R}{\bp{I}{n}})_l)e^{-iyl/p^n}|\leq C'p^n$ on $U$. For that, note since $U$ has a compact closure, there is a constant $C_1$ such that, for all $l  \leq -\mu_{t+1}\frac{p^n}{\delta_R}+p^n+2g-2$,  $|e^{-iyl/p^n}|$ is bounded above by $C_1$ on $U$. Since $\lambda((\frac{R}{\bp{I}{n}})_l) \leq \lambda(R_l)$ and there is a constant $C_2$ such that for all $l$, $\lambda(R_l) \leq C_2. l$, we are done.\\\\
(2)
\begin{equation}\label{the first term}
\begin{split}
\underset{n \to \infty}{\lim}(\frac{1}{p^n})^2\underset{j < p^n}{\sum} \lambda((\frac{R}{\bp{I}{n}})_j)e^{-iyj/p^n} &= \underset{n \to \infty}{\lim}(\frac{1}{p^n})^2\underset{j < p^n}{\sum} \lambda(R_j)e^{-iyj/p^n}\\
&= \underset{n \to \infty}{\lim}(\frac{1}{p^n})^2 \sum \limits_{j=0}^{p^n-1} \lambda(R_j)e^{-iyj/p^n}\frac{p^n(1-e^{-iy/p^n})}{iy}\\
&= \underset{n \to \infty}{\lim}\frac{1}{p^n}\sum \limits_{j=0}^{p^n-1} \lambda(R_j) \int_{j/p^n}^{(j+1)/p^n}e^{-iyx}dx.
\end{split}
\end{equation}
Now consider $(h_n)_n$ the sequence of real valued functions on $[0,1]$ defined by $h_n(x)= \frac{1}{p^n}\lambda(R_{\lfloor xp^n \rfloor})$. The last line of \eqref{the first term} is then $\int_{0}^{1}h_n(x)e^{-iyx}dx$. Since $h_n(x)$ converges to the function $\delta_R x$ uniformly on $[0,1]$, we have
\begin{equation}\label{final form of the first term}
\begin{split}
\underset{n \to \infty}{\lim}(\frac{1}{p^n})^2\underset{j < p^n}{\sum} \lambda((\frac{R}{\bp{I}{n}})_j)e^{-iyj/p^n} &= \delta_R\int_{0}^{1} xe^{-iyx}dx\\
&=-\delta_R(\frac{e^{-iy}-1}{(iy)^2})-\delta_R \frac{e^{-iy}}{iy} \ .
\end{split}
\end{equation}
(3) Using \Cref{length in terms of cohomologies lemma} and \Cref{cohomologies in terms of HN filtration}, we get,
\begin{equation}\label{the second term}
\begin{split}
  &(\frac{1}{p^n})^2\sum \limits_{j=2g-1}^{l_1(n)-1} \lambda((\frac{R}{\bp{I}{n}})_{j+p^n})e^{-iy(j+p^n)/p^n}\\
  & = [-\frac{e^{-iy}}{p^n} (\sum \limits_{b=0}^{t} \mu_{b+1}r_{b+1})+e^{-iy}(\frac{1}{p^n})^2 (\sum \limits_{b=0}^{t} r_{b+1})(g-1)](\sum \limits_{j=2g-1}^{l_1(n)-1} e^{-iyj/p^n})\\
  &-e^{-iy}(\frac{1}{p^n})^2(\sum \limits_{b=0}^{t} r_{b+1})\delta_R\sum \limits_{j=2g-1}^{l_1(n)-1} j e^{-iyj/p^n}\\
  &= [-\frac{1}{p^n}\sum \limits_{b=0}^{t} \mu_{b+1}r_{b+1}+ (\frac{1}{p^n})^2(\sum \limits_{b=0}^{t} r_{b+1})(g-1)]\frac{1-e^{-iy(l_1(n)-2g+1)/p^n}}{1- e^{-iy/p^n}} e^{-iy(1+\frac{2g-1}{p^n})}\\
  & -\frac{i}{p^n}(\sum \limits_{b=0}^{t} r_{b+1})\delta_R \dfrac{d}{dy}(\frac{e^{-iy(2g-1)/p^n}- e^{-iy l_1(n)/p^n}}{1-e^{-iy/p^n}})e^{-iy} \,\, \, \, (\text{using} \ \frac{d}{dy}(e^{-iyj/p^n})= \frac{-ij}{p^n}e^{-iyj/p^n}) \ .
\end{split} 
\end{equation}
While taking limit as $n$ approaches infinity of the last line in \eqref{the second term}, the order of differentiation and taking limit can be exchanged (see \cite{Ahlfors}, Chapter 5, Theorem 1). So taking limit as $n$ approaches infinity in \eqref{the second term} and using \eqref{two limits}, \Cref{the essential limit} we get (3).\\\\
(4) \Cref{length in terms of cohomologies lemma}, \Cref{cohomologies in terms of HN filtration} and a computation as in the proof of (3) shows that for $1 \leq s \leq t$,
\begin{equation}\label{the third term}
\begin{split}
&\sum \limits_{j= u_s(n)}^{l_{s+1}(n)-1} \lambda((\frac{R}{\bp{I}{n}})_{j+p^n})e^{-iy(j+p^n)/p^n}\\
& = [-\frac{1}{p^n}\sum \limits_{b=s}^{t} \mu_{b+1}r_{b+1}+ (\frac{1}{p^n})^2(\sum \limits_{b=s}^{t} r_{b+1})(g-1)]\frac{e^{-iyu_s(n)/p^n}-e^{-iyl_{s+1}(n)/p^n}}{1- e^{-iy/p^n}} e^{-iy}\\
& -\frac{i}{p^n}(\sum \limits_{b=s}^{t} r_{b+1})\delta_R \dfrac{d}{dy}(\frac{e^{-iyu_s(n)/p^n}-e^{-iyl_{s+1}(n)/p^n}}{1- e^{-iy/p^n}})e^{-iy} \ .
\end{split}
\end{equation}
Now (4) follows from taking limit as $n$ approaches infinity and arguing as in the proof of (3).
\end{proof}
\end{proof}


\printbibliography[title= {References}]

\end{document}